\def\phi{{\varphi}}
\newcommand{\CO}[2]{ \left\langle #1 , #2 \right\rangle}
\DeclareSymbolFont{AMSb}{U}{msb}{m}{n}
\DeclareMathSymbol{\N}{\mathbin}{AMSb}{"4E}
\DeclareMathSymbol{\Z}{\mathbin}{AMSb}{"5A}
\DeclareMathSymbol{\R}{\mathbin}{AMSb}{"52}
\DeclareMathSymbol{\Q}{\mathbin}{AMSb}{"51}
\DeclareMathSymbol{\I}{\mathbin}{AMSb}{"49}
\DeclareMathSymbol{\C}{\mathbin}{AMSb}{"43}
\def\be{\begin{equation}}
\def\ee{\end{equation}}
\def\ber{\begin{eqnarray}}
\def\eer{\end{eqnarray}}
\def\beq{\begin{equation}}
\def\eeq{\end{equation}}
\def\Z{{\mathbb{Z}}}
\def\IR{{\mathbb{R}}}
\newcommand{\E}[0]{ \varepsilon}
\newcommand{ \pOm}{\partial \Omega}
\begin{document}

\addtolength{\textheight}{0 cm} \addtolength{\hoffset}{0 cm}
\addtolength{\textwidth}{0 cm} \addtolength{\voffset}{0 cm}

\newenvironment{acknowledgement}{\noindent\textbf{Acknowledgement.}\em}{}

\setcounter{secnumdepth}{5}

 \newtheorem{proposition}{Proposition}[section]
 
\newtheorem{theorem}{Theorem}[section]

\newtheorem{lemma}[theorem]{Lemma}
\newtheorem{coro}[theorem]{Corollary}
\newtheorem{remark}[theorem]{Remark}
\newtheorem{extt}[theorem]{Example}
\newtheorem{claim}[theorem]{Claim}
\newtheorem{conj}[theorem]{Conjecture}
\newtheorem{definition}[theorem]{Definition}
\newtheorem{application}{Application}
\newtheorem{exam}{Example}[section]
\newtheorem{thm}{Theorem}[section]
\newtheorem{prop}{Proposition}[section]

\newtheorem*{assumption}{Assumptions on $a(x)$}

\newtheorem*{thm*}{Theorem A}

\newtheorem{corollary}[theorem]{Corollary}

\title{On supercritical elliptic problems: existence,  multiplicity of positive  and   symmetry breaking solutions\footnote{Both authors  are pleased to acknowledge the support of the  National Sciences and Engineering Research Council of Canada.}}
\author{Craig Cowan\footnote{University of Manitoba, Winnipeg Manitoba, Canada, craig.cowan@umanitoba.ca} \quad  Abbas Moameni\footnote{School of Mathematics and Statistics,
Carleton University,
Ottawa, Ontario, Canada,
momeni@math.carleton.ca} }

\date{}

\maketitle

\vspace{3mm}

\begin{abstract}
The main thrust of our current work is to exploit very specific characteristics of a given problem
in order to acquire improved compactness for supercritical problems and to prove existence of
new types of solutions. To this end, we shall develop a variational machinery in order to  construct a new type of  classical solutions for a large class of supercritical elliptic partial differential equations.\\
The issue of symmetry and symmetry breaking is challenging and  fundamental in mathematics and physics. Symmetry breaking is the source of many interesting phenomena namely phase transitions, instabilities, segregation, etc. 
  As a consequence of our results we shall establish  the existence of several symmetry breaking solutions when the underlying problem is fully symmetric.   Our methodology is variational, and  we are not seeking  non symmetric solutions which bifurcate from the symmetric one.  Instead,  we construct many new  positive solutions by   developing  a minimax principle for general semilinear elliptic problems restricted to a  given convex subset instead of the whole space. 
As a byproduct of our investigation,   several new Sobolev embeddings are established for  functions   having a mild  monotonicity on symmetric monotonic domains.   

\end{abstract}

\noindent
{\it \footnotesize 2010 Mathematics Subject Classification: 35J15, 35A15, 35A16, 35B07  {\scriptsize }  }	 \\
{\it \footnotesize Key words:    Supercritical elliptic equations, Variational and topological methods. } {\scriptsize }

\tableofcontents

\section{Introduction}

In this work we develop a variational machinery to examine a large class of  significant   supercritical  elliptic partial differential equations that arise naturally in various physical models: solitary waves in nonlinear Schr\"odinger  equations;   gravitational potential of a Newtonian self gravitating, spherically symmetric, polytropic fluid; and a model for a cluster of stars.   Our method is variational but as opposed to working on the natural energy space, which typically limits problems to subcritical and critical, we work on closed convex sets (not necessarily a linear subspace) which increases the available compactness. Working on symmetric functions can sometimes increase compactness,  which together with   the  principal of symmetric criticality provides an efficient tool to deal seemingly nocompact settings (see for instance \cite{Bart} and \cite{lion}).   Our method further increases compactness as we are restricting our problems on  an appropriate subsets which goes well beyond the symmetry   induced function spaces under certain compact groups.  The main thrust of our current work is to exploit  very specific characteristics of a given problem in order to acquire improved compactness for supercritical problems and to prove existence of   new types of  solutions. Our approach is broad enough to cover many elliptic partial differential equations, and in general,   one can employ   a combination of  symmetry, monotonicity, smallness in certain norms, convexity, and etc to name a few.\\

 Broadly speaking we are interested in obtaining positive classical solutions of equations of the form 
\begin{equation} \label{eq}
-\Delta u + V(x)  u= a(x) u^{p-1}  \mbox{ in } \Omega, 
\end{equation}
where $\Omega \subset \IR^N$ is either the full space or $\Omega$ is a bounded subset and in which case we add the boundary condition $ u=0$ on $ \pOm$.  
%Here $ V(x)$ will not in general play a major role except in the case of Section  \ref{singular_sect}.   
Our main interest will be in obtaining solutions in the case of $p>2$ and supercritical.    Generally $a$ will be a sufficiently smooth function which satisfies some symmetry and monotonicity assumptions and we point out any added compactness is not coming from $a$; which is a different phenomena from the H\'enon equation. The domains we will examine will be domains of double and triple revolution with some added monotonicity properties.   Additionally when the problems has extra symmetry we will obtain solutions which do not inherit the extra symmetry of the problem.  On radial domains we will obtain nonradial solutions which are not 
 foliated Schwarz symmetric. As a consequence of our approach, many new  multiplicity results are also obtained. \\
 
%**Since we examine and make advances for a fair number of problems in this work we think its beneficial to now list the problems we examine and in what sections one can find the related results**

Since we address  existence and multiplicity issues for  numerous
 supercritical problems we  list the equations here for the   convenience  of the   readers.
Even though each of these problems poses  their own difficulty,  our variational machinery is able to give a unified approach.  
  \begin{itemize}  \item     In  Section \ref{section_double_annular} we examine the following problem 
     \begin{equation} %\label{eq_fir}
\left\{\begin{array}{ll}
-\Delta u = a(x) u^{p-1} &  \mbox{ in } \Omega, \\
u= 0 &   \mbox{ on } \partial \Omega.
\end {array}\right.
\end{equation}   Here we consider annular domains which are radial and nonradial.   On the radial domains we obtain new type of positive  nonradial solutions for which do not  have the  foliated Schwarz symmetry.  In all cases we obtain results for a supercritical range of $p$.   The main result is  Theorem \ref{theorem-nonlinear-annular}.

\item In Section  \ref{henon_ball} we examine 
 \begin{equation} %\label{eq_hen}
\left\{\begin{array}{ll}
-\Delta u = |x|^\alpha u^{p-1} &  \mbox{ in } B_1, \\
u= 0 &   \mbox{ on } \partial B_1,
\end {array}\right.
\end{equation} where $B_1$ is the unit ball in $\R^N.$ In  Theorem \ref{non-radial-henon}, we obtain several types of positive new  nonradial solutions on a range of supercritical $p$.

    \item In Section \ref{henon_full_section} we examine 
 \begin{equation} \label{henon_full_t}
 -\Delta u + u = |x|^\alpha u^{p-1} \qquad \mbox{ in } \IR^N= \IR^{n} \times \IR^n, 
 \end{equation} and we show there is a positive classical solution for 
 \[ \frac{2N+2 \alpha-4}{N-2}<p< \frac{2N+2\alpha}{N-2},\] and for large $\alpha$ we obtain a nonradial solution.  
  Theorem \ref{henon_full_thm} is devoted to this problem.

    \item In Section  \ref{singular_sect} we examine 
    \begin{equation} \label{eq_sing_pot_t}
\left\{\begin{array}{ll}
-\Delta u + \frac{u}{|x|^\alpha}  = u^{p-1} &  \mbox{ in } B_1, \\
u= 0 &   \mbox{ on } \partial B_1,
\end {array}\right.
\end{equation} where $ \alpha>2$ (note this  is in some sense supercritical).  For $ 2<p< \frac{2N+2\alpha-4}{N-2}$ we obtain a positive classical solution of (\ref{eq_sing_pot_t}) and for large $\alpha$ we obtain a nonradial solution.   Additionally the solution decays to zero at the origin quicker than any polynomial.  See Theorem  
 \ref{sing_theorem} for details.   Note here the zero order potential is playing a key role and we believe this is new phenomena.

\item In Section \ref{non-rad-sect}  we give an  approach to show  ground states of various problems on radial domains are nonradial.  Indeed, as stated  in Theorem \ref{nonradial},  the best constant in the well known hardy inequality corresponding to the underlying domain  plays a major role to address this challenging  affair.

\item  In Section \ref{dim+1}  we examine 
 \begin{equation} \label{eq_triple_t}
\left\{\begin{array}{ll}
-\Delta u  = a(x) u^{p-1} &  \mbox{ in } \Omega, \\
u= 0 &   \mbox{ on } \pOm,
\end {array}\right.
\end{equation}
 where $\Omega$ is a bounded domain in $\IR^N$ which is also a domain of triple revolution.  Under various assumptions we prove the existence and also some multiplicity results (for a range of supercritical $p$).  We have listed our contributions in Theorems  \ref{triple-thm},    \ref{annulus_triple_dep}, \ref{phi-dep} and 
  Corollary \ref{mult_ann}.      
 
  \end{itemize} 
  
  A crucial step in proving the above existence  results will be in obtaining improved Sobolev imbeddings for various classes of symmetric and monotonic functions.   The increases in compactness comes from two distinct properties of the closed convex sets we choose to work on,  namely the symmetry and also the monotonicity.    One should  note that these improved imbeddings also play a crucial role in the proof of the regularity of the solution. One added benefit of our approach is we can use energy levels directly to prove various results.

\subsection{Outline of the paper}

  We now give a brief outline of the paper.  In Section  \ref{variational_abst}  we develop our abstract variational machinery.   In Section \ref{double_dom}  we introduce domains of $m$ revolution and in particular we discuss domains of double revolution.   Then in Section \ref{section_double_annular} we consider elliptic problems on domains of double revolution which are also annular type domains.    The H\'enon equation on the unit ball is considered in Section \ref{henon_ball}.    In Section \ref{henon_full_section} we consider a H\'enon like equation, but with a zero order term, on the full space.    In Section \ref{singular_sect} we consider a singular potential problem.  Section \ref{non-rad-sect} is where we develop the needed machinery to obtain solutions on symmetric domains without the naturally expected symmetry.   Finally in Section \ref{dim+1}
 we consider domains of triple revolution.

 \subsection{Background}

Here we give some background on the the problem and for this we take $ a(x)=1$ and $ V(x)=0$ and hence we consider 
\begin{equation} \label{eq_a=1}
\left\{\begin{array}{ll}
-\Delta u =  u^{p-1} &  \mbox{ in } \Omega, \\
u>0    &   \mbox{ in } \Omega, \\
u= 0 &   \mbox{ on } \pOm.
\end {array}\right.
\end{equation}
We assume $ \Omega$ a bounded smooth domain in $ \IR^N$.   For $N \ge 3$ the critical  exponent   $2^*:=\frac{2N}{N-2}$ plays a crucial role and for $ 2<p<2^*$ a variational approach shows the existence of a smooth positive solution of (\ref{eq_a=1}). For $ p \ge 2^*$ there is no positive classical solution via the Pohozaev identity on star shaped domains, see \cite{poho}.   For general domains in the critical/supercritical case, $ p \ge 2^*$, the existence versus nonexistence of positive solutions of (\ref{eq_a=1}) presents a great degree of difficulties; see \cite{Bahri-Coron, Coron,del_p,man_1,man_2,man_3,wei,  A2, Passaseo, Passaseo_2,shaaf, schmitt}.  Many of these results are very technical and some require  perturbation arguments.  \\

The possibility of utilizing  the most of features  that a given problem can offer  to gain  improved compactness for supercritical problems and to prove existence of   new types of  solutions is what motivated us for this work. As mentioned earlier,  these features could be a combination of symmetry, monotonicity, convexity
and etc. For instance, let us consider the Neumann boundary problem 
 \begin{equation} \label{eq_Nball}
\left\{\begin{array}{rr}
-\Delta u + u = a(r) u^{p-1} &  \mbox{ in } B_1, \\
\partial_\nu u= 0 &   \mbox{ on } \partial B_1,
\end {array}\right.
\end{equation}
 where $B_1$ is the unit ball centered at the origin in $\IR^N$.  The interest here is in obtaining nontrivial solutions for values of $p>\frac{2N}{N-2}$. 
 In  \cite{first_rad_neum} they considered the variant of (\ref{eq_Nball}) given by $ -\Delta u + u = |x|^\alpha u^{p-1}$ in $B_1$ with $ \frac{ \partial u}{ \partial \nu}=0$ on $ \partial B_1$ (for Dirichlet versions of the H\'enon equation see, for instance, \cite{Ni,glad,cowan}).
 They proved the existence of a positive radial solutions of this equation with arbitrary growth using a shooting argument.   The solution turns out to be an increasing function.
They also perform numerical computations to see the existence of positive oscillating solutions.  In  \cite{serra_tilli} they considered  (\ref{eq_Nball})   along with the classical energy associated with the equation given by
  \[E(u):=\int_{B_1} \frac{ | \nabla u|^2 +u^2}{2}\, dx  -\int_{B_1} a(|x|)F(u)\,  dx,\] where $F'(u)=f(u)$ (they considered a more general nonlinearity).    Their goal was  to find critical points of $E$ over $H^1_{rad}(B_1):=\{ u \in H^1(B_1):  u \mbox{ is radial} \}$.  Of course since $f$ is supercritical the standard approach of finding critical points will present difficulties and hence their idea was to find critical points of $E$ over the cone $ \{ u \in H_{rad}^1(B_1): 0 \le u, \mbox{ $u$ increasing} \}$.   Doing this is somewhat standard but now the issue is the critical points don't necessarily correspond to critical points over $H_{rad}^1(B_1)$ and hence one can't conclude the critical points solve the equation; for instance the critical point could lie on the boundary of the convex cone and then one cannot perturb in all directions.   The majority of their work was to show that in fact the critical points of $E$ on the cone are really critical points over the full space.   We remark  that this work generated a lot of interest in this equation and many authors investigated these idea's of using monotonicity to overcome a lack of compactness.  For further results  regarding these Neumann problems on radial domains (some using these monotonicity ideas and some using other new methods) see  \cite{Mo2, first_grossi, Weth, grossi_new, den_serra, add4, ACL,  add5}. 
  
In   \cite{cowan-abbas}, by making use of duality theory in convex analysis,   we examined the super critical Neumann problem given by 
\begin{equation}   \label{eq_trans}
\left\{\begin{array}{ll}
-\Delta u + u= a(x) f(u), &  \mbox{ in } \Omega, \\
u>0,    &   \mbox{ in } \Omega, \\
\frac{\partial u}{\partial \nu}= 0, &   \mbox{ on } \pOm,
\end {array}\right.
\end{equation}
for multiradial domains which are  a  natural extension of radial domains.  The idea of using convexity to deal with partial differential equations has a very long history starting from \cite{EKT, Toland} and also the recent papers \cite{Mo, Mo3}.   For Neumann problems on general domains see \cite{Mo2, 100, 101, 102,104,103,105,50,106}.  \\ 

We now return to the Dirichlet problems.   There have been many supercritical works that deal with domains that have certain symmetry, for instance, see \cite{clapp1,clapp2,clapp3, clapp4,clapp5,clapp6, Mo4}.  

In   the case of  the annulur domains the authors in  \cite{Ann100, Ann101,Ann102} examined subcritical or slightly supercritical problems on expanding annuli and obtained nonradial solutions.  In \cite{Ann2}  they obtain nonradial solutions to supercritical problems on expanding annulur domains.  In \cite{Ann1} they consider nonradial expanding annulur domains and they obtain the existence of positive solutions.   In \cite{wei, clapp5}  they consider domains with a small hole and obtain positive solutions. 
We shall also refer the interested reader to the recent works 
\cite{gelf_a, Weth_annulus, orgin_an}  where the idea of monotonicity together with variational and non-varitioanal  methods were employed to deal with equation (\ref{eq_trans}) in annular type domains.

\section{A variational approach towards  supercritical problems} \label{variational_abst}

In this section we assume that $\Omega$ is a domain in $\R^N$ which is not necessarily bounded. We also assume that $a $ is a non-negative measurable function that is not identically zero.
For $p>1,$ we define 
\[L^p_a(\Omega)=\left \{u:\,\,  \int_\Omega a(x)|u|^p\,dx <\infty\right \},\]
equipped with the norm 
\[\|u\|_{L^p_a(\Omega)}=\left ( \int_\Omega a(x)|u|^p\,dx  \right)^{\frac{1}{p}}.\]
We have the following general variational principle  for possibly super critical elliptic problems.

%\textbf{2.  \quad (( the proof would be easier if instead of $K$ compactly imbedded in $L^p_a$  it was imbedded in a (lets assume domain bounded for now) in some $L^q_a$ for some $ q>p$.  And in practice this we will normally have...  then almost surely i can use iteration.))} 

\begin{thm}\label{var-pri} ($K$ ground state solution) Let $\Omega$ be a  domain in $\R^N$,    $p>2,$  and $a$ be a non-negative function that is not identically zero. Let $\lambda $ be a non-negative number which is strictly positive if $\Omega$ is unbounded.  Consider the problem 
\begin{eqnarray}
\left\{
  \begin{array}{ll}
    -\Delta  u+\lambda u=a(x)|u|^{p-2}u, & x \in \Omega, \\
   u \in H_0^1(\Omega),
  \end{array}
\right.
\end{eqnarray}
and its formal  Euler-Lagrange functional 
\[I(u)=\frac{1}{2}\int_\Omega (|\nabla u|^2+\lambda u^2) \, dx-\frac{1}{p}\int_\Omega a(x)|u|^p\, dx.\]
Let    $K$ be a  convex and closed subset of $ H_0^1(\Omega)$. Suppose the following two assertions hold:
\begin{enumerate}
\item[$(i)$] $K$ is compactly  embedded in $L_a^p(\Omega),$ i.e., 
every bounded sequence in $K$ has a converging subsequence in $L_a^p(\Omega).$

\item[ $(ii)$] (Pointwise invariance property) For each $\bar u \in K$ there exists $\bar{v}\in K$   such that 

\[-\Delta \bar{v}+\lambda \bar v= a(x)|\bar{u}|^{p-2}\bar{u},\] 

in the weak sense, i.e.,
\[\int_\Omega \nabla \bar v \cdot \nabla\eta \, dx  +\lambda \int_\Omega\bar v\eta \, dx  =\int_\Omega a(x)|\bar{u}|^{p-2}\bar{u}\eta\, dx, \qquad \forall \eta \in H_0^1(\Omega)\cap L_a^p(\Omega).\]
\end{enumerate} 
 Then there exist $c>0$ and  $\tilde {u} \in K$ such that $I(\tilde u)=c$  and  $\tilde u$ is  a weak solution of the equation 
\begin{equation}  \label{eqthm}
\left\{\begin{array}{ll}
-\Delta u +\lambda u=  a(x)|u|^{p-2}u, &   x\in \Omega \\
u= 0, &    x\in  \partial \Omega.
\end {array}\right.
\end{equation}  We call $\tilde u$ a \textbf{$K$-ground state solution} of (\ref{eqthm}).
A characterization for the critical value $c$ is given in the proof.
\end{thm}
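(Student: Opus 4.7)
The plan is to follow a Nehari-type scheme restricted to the closed convex set $K$, and then exploit hypothesis (ii) together with the convexity of $K$ to promote a constrained minimizer to a genuine weak solution of (\ref{eqthm}). Write $J(u) = \int_\Omega (|\nabla u|^2 + \lambda u^2)\, dx$ and $G(u) = \int_\Omega a(x)|u|^p\, dx$, and define the candidate critical value by
\[ c = \inf_{u \in K \setminus \{0\}} \sup_{t > 0} I(tu), \]
which (in the typical situation where $tK \subset K$ for $t > 0$, as in the applications in the paper) agrees with $\inf_{\mathcal{N}_K} I$, where $\mathcal{N}_K = \{u \in K \setminus \{0\} : J(u) = G(u)\}$ is the Nehari-type set inside $K$. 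On $\mathcal{N}_K$ one has $I = (\tfrac{1}{2} - \tfrac{1}{p}) J$, and combining the continuous embedding $K \hookrightarrow L^p_a$ inherited from (i) with $J(u) = G(u) \le C\|u\|_{H_0^1}^p$ forces $\|u\|_{H_0^1} \ge c_0 > 0$ on $\mathcal{N}_K$, so in particular $c > 0$.

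Next I would extract a compactness limit. A minimizing sequence $\{u_n\} \subset \mathcal{N}_K$ is bounded in $H_0^1$ (since $I(u_n)$ is comparable to $\|u_n\|_{H_0^1}^2$, using Poincar\'e when $\lambda = 0$ on a bounded domain); pass to a subsequence with $u_n \rightharpoonup \tilde u \in K$ (using that $K$ is weakly closed as a closed convex set) and, by (i), $u_n \to \tilde u$ strongly in $L^p_a$. Then $G(\tilde u) = \lim G(u_n) \ge c_0^2 > 0$, so $\tilde u \neq 0$. Weak lower semicontinuity of $J$ gives $J(\tilde u) \le \liminf J(u_n) = G(\tilde u)$, and if strict then the scaling $t_* < 1$ with $t_*^{p-2} = J(\tilde u)/G(\tilde u)$ would produce $t_*\tilde u \in \mathcal{N}_K$ with $I(t_*\tilde u) < c$, a contradiction. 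Hence $\tilde u \in \mathcal{N}_K$ and $I(\tilde u) = c$.

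The heart of the proof, and the step I expect to be the main obstacle, is to upgrade $\tilde u$ from a $K$-ground state to a weak solution of (\ref{eqthm}). This is where hypothesis (ii) enters. Apply (ii) at $\bar u = \tilde u$ to produce $\tilde v \in K$ with $-\Delta \tilde v + \lambda \tilde v = a(x)|\tilde u|^{p-2}\tilde u$ in the weak sense. By convexity of $K$, the segment $\gamma(t) = (1-t)\tilde u + t\tilde v$ lies in $K$ for $t \in [0,1]$, and we choose $s(t) > 0$ so that $s(t)\gamma(t) \in \mathcal{N}_K$, normalised by $s(0) = 1$. Set $m(t) = I(s(t)\gamma(t))$; by the very definition of $c$, $m(t) \ge c = m(0)$, so $m'(0^+) \ge 0$. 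On the other hand, on $\mathcal{N}_K$ the $s$-derivative contribution in $m'(t)$ vanishes, and a direct Taylor expansion $I(s(t)\gamma(t)) = I(\tilde u) + t\,\langle I'(\tilde u), \tilde v - \tilde u\rangle + O(t^2)$ (valid because $\partial_s I(s\tilde u)|_{s=1} = 0$) yields
\[ m'(0^+) = \langle I'(\tilde u), \tilde v - \tilde u\rangle. \]
Testing the equation for $\tilde v$ against $\eta = \tilde v - \tilde u \in H_0^1 \cap L^p_a$ (admissible because $K \subset L^p_a$ by (i)) and substituting, every term involving $a|\tilde u|^{p-2}\tilde u$ cancels and we are left with
\[ \langle I'(\tilde u), \tilde v - \tilde u\rangle = -\int_\Omega \big(|\nabla(\tilde v - \tilde u)|^2 + \lambda (\tilde v - \tilde u)^2\big)\, dx \le 0. \]
The two inequalities $m'(0^+) \ge 0$ and $m'(0^+) \le 0$ force equality, hence $\tilde v = \tilde u$; so $\tilde u$ itself satisfies (\ref{eqthm}) weakly, with the characterization $c = \inf_{u \in K \setminus \{0\}} \sup_{t > 0} I(tu)$ promised in the statement.

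Beyond the routine bookkeeping, the main technical obstacles are (a) justifying the envelope-type differentiation at $t = 0$ rigorously (either via an implicit function argument giving $s \in C^1$ near $t = 0$ on the Nehari constraint, or by the direct expansion indicated above), and (b) the case where $K$ is merely convex rather than a cone, which is handled by replacing the Nehari formulation by a mountain-pass characterization $c = \inf_{\gamma \in \Gamma_K}\max_{t \in [0,1]} I(\gamma(t))$ along admissible paths in $K$ and running the same convex-combination perturbation at the peak of a minimizing path.
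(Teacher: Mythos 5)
Your proposal is correct for every $K$ actually used in the paper, but it follows a genuinely different route, and the distinction is worth recording.  The paper works on the Banach space $V = H_0^1(\Omega)\cap L_a^p(\Omega)$ and folds the constraint into the energy by writing $E_K=\Psi_K-\Phi$, where $\Psi_K$ is $\Psi$ plus the indicator of $K$; since $\Psi_K$ is proper, convex and lower semicontinuous, Szulkin's nonsmooth mountain pass theorem produces a critical point $\bar u\in K$ directly in the sense of the variational inequality $\langle D\Phi(\bar u),\bar u - v\rangle+\Psi_K(v)-\Psi_K(\bar u)\ge 0$, with level $c=\inf_{\gamma}\max_t E_K[\gamma(t)]$.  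You instead minimize $I$ over a Nehari set adapted to $K$, extract a minimizer via hypothesis (i), and derive the tangential first‑order condition $\langle I'(\tilde u),\tilde v-\tilde u\rangle\ge 0$ by re-projecting the convex segment $(1-t)\tilde u+t\tilde v$ back onto the constraint (the implicit scaling $s(t)$ is $C^1$ since $G(\tilde u)=J(\tilde u)>0$).  From that point the two arguments coincide: both insert $\tilde v$ from (ii) into the first‑order inequality, use the PDE for $\tilde v$ to cancel the nonlinear term, and are left with $-\|\tilde v-\tilde u\|_{H^1_0}^2\ge 0$, hence $\tilde u=\tilde v$.  What your approach buys is elementarity (no nonsmooth minimax machinery) and a transparent ground‑state characterization; what it costs is generality: the Nehari scaling and the re‑projection $s(t)\gamma(t)$ both require $tK\subset K$ for $t>0$, so the argument as written only covers cones.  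That is enough for all the $K_\pm$ sets appearing later in the paper, but the theorem allows an arbitrary closed convex $K$, and your proposed mountain‑pass patch for the non‑cone case is in effect a reinvention of the paper's $\Psi_K$/Szulkin device — so for the fully general statement the two routes merge.
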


We shall need some preliminaries before proving this theorem. 
Consider the Banach space  $V= H_0^1(\Omega)\cap L_a^p(\Omega)$  equipped  with the following norm
\[\|u\|_V= \|u\|_{H_0^1(\Omega)}+\|u\|_{L_a^{p}(\Omega)},\]
and 
note that the duality pairing between $V$ and its dual $V^*$ is defined by
\[\langle u, u^*\rangle=\int_\Omega u(x) u^*(x)\, dx,\qquad \forall u\in V, \, \forall u^* \in V^*.\]
We define $\Psi: V \to \R$ and $\Phi: V \to \R$ by 
\[\Psi(u)=\frac{1}{2}\int_\Omega (|\nabla u|^2+\lambda u^2) \, dx,\]
and 
\[\Phi(u)=  \frac{1}{p}\int_{\Omega}  a(x)|u|^p dx. \]

We remark that even though $\Phi$ is not even well-defined on  $H_0^1(\Omega)$ for large $p$, but it is continuously differentiable on the space $V= H_0^1(\Omega)\cap L_a^p(\Omega)$.
Finally, let us introduce the  functional $E_K(u): V\rightarrow (-\infty, +\infty]$ defined by
\begin{equation}\label{E}
E_K(u):= \Psi_K(u)- \Phi(u)
\end{equation}
where
\begin{eqnarray}
\Psi_K(u)=\left\{
  \begin{array}{ll}
      \Psi(u), & u \in K, \\
    +\infty, & u \not \in K.
  \end{array}
\right.
\end{eqnarray}
Note that $E_K$  is indeed the Euler-Lagrange functional corresponding to (\ref{eqthm})  restricted to $K$.
We shall now recall some notations and results for the minimax principles for lower semi-continuous functions.
\begin{definition}\label{cp}
Let $V$ be a real Banach space,  $\Phi\in C^1(V,\mathbb{R})$ and $\Psi: V\rightarrow (-\infty, +\infty]$ be proper (i.e. $Dom(\Psi)\neq \emptyset$), convex and lower semi-continuous. 
A point $u\in  V$ is said to be a critical point of
\begin{equation} \label{form}I:=  \Psi-\Phi \end{equation} if $u\in Dom(\Psi)$  and if it satisfies
the inequality
\begin{equation}
 \CO{D \Phi(u)}{ u-v}+ \Psi(v)- \Psi(u) \geq 0, \qquad \forall v\in V,
\end{equation}
where $ \CO{ \cdot }{ \cdot } $ is the duality pairing between $V$ and its dual $V^*.$
\end{definition}

\begin{definition}\label{psc}
We say that $I$ satisfies the Palais-Smale compactness  condition (PS)   if
every sequence $\{u_n\}$ such that
\begin{itemize}\label{2}
\item  $I[u_n]\rightarrow c\in \mathbb{R},$
\item  $\CO{D \Phi(u_n)}{ u_n-v}+ \Psi(v)- \Psi(u_n) \geq -\E_n\|v- u_n\|, \qquad \forall v\in V,$
\end{itemize}
where $\E_n \rightarrow 0$, then $\{u_n\}$ possesses a convergent subsequence.
\end{definition}

The following non-smooth mountain pass  theorem is  due to  A. Szulkin  \cite{szulkin}. 
\begin{thm}   \label{MPT}   Suppose that
$I : V \rightarrow (-\infty, +\infty ]$ is of the form (\ref{form}) and satisfies the Palais-Smale   condition and  the Moutaint Pass Geometry (MPG):
\begin{enumerate}
\item $I(0)= 0$.
\item  There exists $e\in V$ such that $I(e)\leq 0$.
\item There exists some $\rho$ such that $0<\rho<\|e\|$ and for every $u\in V$ with $\|u\|= \rho$ one has $I(u)>0$.
\end{enumerate}
Then $I$ has a critical value $c>0$ which is  characterized by
$$c= \inf_{\gamma\in \Gamma}  \sup_{t\in [0,1]} I[\gamma(t)],$$
where   $\Gamma= \{\gamma\in C([0,1],V): \gamma(0)=0,\gamma(1)= e\}.$
\end{thm}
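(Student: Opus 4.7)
The plan is to apply Szulkin's non-smooth mountain pass theorem (Theorem~\ref{MPT}) to $E_K=\Psi_K-\Phi$ on $V=H_0^1(\Omega)\cap L^p_a(\Omega)$, producing a critical point $\tilde u\in K$, and then to invoke the pointwise invariance property (ii) to identify $\tilde u$ with the function $\bar v$ furnished by (ii), thereby upgrading the abstract critical point to a bona fide weak solution of the Euler--Lagrange equation.

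\textbf{Mountain pass geometry.} One has $E_K(0)=0$. The assumption that $\lambda>0$ on unbounded $\Omega$ (and Poincar\'e on bounded $\Omega$ when $\lambda=0$) makes $\Psi$ coercive: $\Psi(u)\ge c_1\|u\|_{H_0^1}^2$. Continuity of the embedding $K\hookrightarrow L^p_a(\Omega)$---a by-product of the compactness in (i)---gives $\Phi(u)\le c_2\|u\|_{H_0^1}^p$ on $K$, so for $p>2$ one obtains $E_K(u)\ge c_1\|u\|_{H_0^1}^2/2-c_2\|u\|_{H_0^1}^p/p>0$ on a small sphere $\|u\|_V=\rho$. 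For the opposite side of the mountain, fix any nonzero $u_0\in K$ with $\int_\Omega a|u_0|^p\,dx>0$ (a nontriviality that must be present, else the theorem is vacuous). Under the natural structural assumption (satisfied by all applications below) that $K$ contains the ray $tu_0$, $t\ge 0$, we have $E_K(tu_0)=\tfrac{t^2}{2}\Psi(u_0)-\tfrac{t^p}{p}\int a|u_0|^p\,dx\to-\infty$ as $t\to\infty$, so $e=Tu_0\in K$ with $E_K(e)\le 0$ for $T$ large.

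\textbf{Palais--Smale.} For a Szulkin PS sequence $\{u_n\}\subset K$, I would test the perturbed critical-point inequality against $v=\theta u_n\in K$ for $\theta\to 1^-$ (legitimate by convexity of $K$ together with $0\in K$) to extract an Ambrosetti--Rabinowitz-type bound $2\Psi(u_n)\le p\Phi(u_n)+\varepsilon_n\|u_n\|_V$. Combining with $E_K(u_n)\to c$ and the coercivity of $\Psi$ yields boundedness of $\{u_n\}$ in $V$. Passing to a subsequence, $u_n\rightharpoonup u\in K$ weakly in $H_0^1$ (closed convex sets are weakly closed), and compactness (i) gives $u_n\to u$ strongly in $L^p_a$. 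Testing the Szulkin inequality at the admissible $v=u\in K$, and using H\"older to send $\langle D\Phi(u_n),u_n-u\rangle\to 0$, one obtains $\limsup\Psi(u_n)\le\Psi(u)$; together with weak lower semicontinuity of $\Psi$ this forces $\Psi(u_n)\to\Psi(u)$, and hence strong $H_0^1$-convergence by uniform convexity of the Hilbert norm. Consequently $u_n\to u$ in $V$.

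\textbf{Upgrading to a weak solution.} Szulkin's theorem now yields $\tilde u\in K$ with $E_K(\tilde u)=c>0$ satisfying
\[ \langle D\Phi(\tilde u),\tilde u-v\rangle+\Psi(v)-\Psi(\tilde u)\ge 0\qquad\forall v\in K.\]
Rewritten, this states that $\tilde u$ minimizes the strictly convex quadratic $J(v):=\Psi(v)-\langle D\Phi(\tilde u),v\rangle$ over $K$. By hypothesis (ii) applied to $\bar u=\tilde u$, there exists $\bar v\in K$ weakly solving $-\Delta\bar v+\lambda\bar v=a|\tilde u|^{p-2}\tilde u$, which is precisely the Euler--Lagrange equation for the \emph{unconstrained} minimization of $J$ on $V$; since $\Psi$ is strictly convex, $\bar v$ is the unique global minimizer of $J$ on all of $V$. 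As $\bar v\in K$, the constrained minimum on $K$ is attained at $\bar v$ as well, and strict convexity forces $\tilde u=\bar v$. Therefore $\tilde u$ itself weakly solves the PDE, with critical value $c=\inf_{\gamma\in\Gamma}\sup_{t\in[0,1]}E_K(\gamma(t))$, where $\Gamma=\{\gamma\in C([0,1],V):\gamma(0)=0,\,\gamma(1)=e\}$. The principal obstacle is the PS verification, where one must carefully exploit both the compactness (i) and the non-smooth character of the Szulkin inequality to promote weak convergence in $H_0^1$ to strong convergence in the full norm of $V$.
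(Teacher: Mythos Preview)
Your proposal does not prove the stated theorem. Theorem~\ref{MPT} is Szulkin's abstract non-smooth mountain pass theorem, and the paper does not prove it either: it is quoted from \cite{szulkin} and used as a black box. What you have written is instead a proof of Theorem~\ref{var-pri} (the $K$-ground state existence result), which \emph{applies} Theorem~\ref{MPT} to the functional $E_K=\Psi_K-\Phi$. So as a proof of the statement actually in question, your proposal is simply off target.

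That said, viewed as a proof of Theorem~\ref{var-pri}, your argument follows essentially the same route as the paper's: verify the mountain pass geometry and the Palais--Smale condition for $E_K$ using the compact embedding (i), invoke Theorem~\ref{MPT} to obtain a critical point $\tilde u\in K$ in the sense of Definition~\ref{cp}, and then use the pointwise invariance property (ii) to show $\tilde u=\bar v$ and hence that $\tilde u$ is a genuine weak solution. Your identification step via strict convexity of $J(v)=\Psi(v)-\langle D\Phi(\tilde u),v\rangle$ is a clean variant of the paper's direct computation, which plugs $v=\bar v$ into the critical point inequality and obtains $\tfrac12\|\bar v-\tilde u\|_{H_0^1}^2\le 0$. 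Both arguments are equivalent. One small point: in your PS boundedness step you test with $v=\theta u_n$ for $\theta\to 1^-$, whereas the paper takes $v=(1+1/p)u_n$ with $\theta>1$; this requires that $K$ be a cone (closed under positive scalar multiples), which is indeed the case in all the applications but is worth stating explicitly since the hypotheses of Theorem~\ref{var-pri} only say $K$ is convex and closed.
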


\noindent 
\textbf{Proof of Theorem \ref{var-pri}} Note first   that $K$ is a weakly closed convex subset  in $H_0^1(\Omega)$ where we equip 
$H_0^1(\Omega)$ by the following norm:
\[\|u\|^2_{H_0^1(\Omega)}= \int_{\Omega} (|\nabla u|^2+\lambda |u|^2) dx.\]
It follows from condition $(i)$ in the theorem that $K$ is compactly embedded in $L^p_a$.  Thus,    there exists a constant $C$ such that 
\begin{equation}\label{equiv}\|u\|_{H_0^1(\Omega)}\leq \|u\|_V \leq C \|u\|_{H_0^1(\Omega)}, \qquad \forall u \in K. \end{equation}

Both the mountain pass geometry and  (PS) compactness condition for the function $E_K= \Psi_K- \Phi$ given in (\ref{E}) follow from the standard arguments together with inequality (\ref{equiv}). Here,  for the conveience of the reader,  we sketch the proof for the (PS) compactness condition and the mountain pass geometry.
Suppose that $\{u_n\}$ is a sequence in $K$ such that $E_K(u_n)\rightarrow c\in \mathbb{R}$, $\E_n \rightarrow 0$ and
\begin{equation}\label{od}
 \Psi_K(v)- \Psi_K(u_n)+ \langle D\Phi(u_n), u_n-v \rangle\geq -\E_n \|v- u_n\|_V, \qquad \forall v\in V.
\end{equation}
We must show that $\{u_n\}$ has a convergent subsequence in $V$.  Firstly,  we prove that $\{u_n\}$ is bounded in $V$. 
Note that 
   since $E_K(u_n)\rightarrow c$, then  for large values of $n$ we have 
\begin{equation}\label{od0}
 \frac{1}{2}\| u_n\|^2_{H_0^1(\Omega)}- \frac{1}{p}\int_{\Omega}  a(x)|u|^p dx\leq c+1.
\end{equation}
Note that 
\[\langle D\varphi(u_n),  u_n\rangle = \int_{\Omega} a(x) |u_n(x)|^{p} dx.\]
Thus, by setting  $v= ru_n$ in (\ref{od}) with $r=1+1/p$ we get 
\begin{equation}\label{od1}
\frac{(1-r^2)}{2} \|u_n\|^2_{H_0^1(\Omega)}+ (r-1)\int_{\Omega}a(x)|u_n|^p dx\leq \E_n (r-1) \| u_n\|_V.
\end{equation}
Adding up  (\ref{od1}) and  (\ref{od0}) yields that 
\begin{equation*}
 \|u_n\|^2_{H_0^1(\Omega)}\leq C_0(1+  \| u_n\|_{V}),
\end{equation*}
for some constant $C_0>0.$
Therefore, by considering (\ref{equiv}),   $\{u_n\}$ is bounded in $H_0^1(\Omega).$  Using standard results in Sobolev spaces, after passing to a subsequence if necessary, there exists $\bar u \in H_0^1(\Omega)$ such that  $u_n\rightharpoonup \bar{u}$ weakly in $H_0^1(\Omega)$ and  $u_n\rightarrow  \bar{u}$ a.e..  Also according to condition $(i)$  in the theorem,  from boundedness of $\{u_n\}\subset K$  in $H_0^1(\Omega)$, one can deduce that  the strong  convergence of $u_n$ to $\bar u$ in  $L_a^p.$ 
Now in (\ref{od}) set $v= \bar{u}$ to get 
\begin{equation}\label{oder1}
\frac{1}{2} ( \|\bar{u}\|^2_{H_0^1(\Omega)} - \|u_n\|^2_{H_0^1(\Omega)}) +\int_{\Omega}a(x)|u_n|^{p-1} ( u_n- \bar{u}) dx
\geq -\E_n \|u_n- \bar{u}\|_V.
 \end{equation}
Therefore, it follows from  (\ref{oder1})  that 
\[
\frac{1}{2} (      \limsup_{n\rightarrow \infty} \|u_n\|^2_{H_0^1(\Omega)}  -     \|\bar{u}\|^2_{H^1}  ) \leq 0.
\]
The latter  yields that 
$$u_n\rightarrow \bar{u}\quad \text{strongly in }\quad V$$
as desired. 
We now   verify the mountain pass geometry of the functional $E_K.$
It is clear that $E_K(0)=0$. Take $e \in K$. It follows that 
\begin{equation*}
 E_K(te)=
\frac{t^2}{2} \int_{\Omega} (|\nabla e|^2+\lambda |e|^2) dx-  \frac{t^p}{p}\int_{\Omega} a(|x|)|e|^{p} dx\\
 \end{equation*}
Now, since $p> 2$,  for $t$ sufficiently large $E_K(te)$ is negative.
 Take 
 $u\in K$ with $\|u\|_{V}= \rho>0$. We have
\begin{equation*}\label{MPG6}
E_K(u)= \frac{1}{2} \|u\|^2_{H^1}- \frac{1}{p}\int_{\Omega} a(|x|) |u|^p dx.
\end{equation*}
Note that by (\ref{equiv}), there exist positive constant $C$  such that  for every $u\in K$ one has
\begin{equation}\label{norm}
\|u\|_{H^1}\leq\|u\|_{V}\leq C \|u\|_{H^1}.
\end{equation}
We also have that 
$$\int_{\Omega} a(|x|) |u|^p dx\leq C_0\|u\|^p_{V}. $$
Therefore
\begin{align}
 E_K(u)& \geq  \frac{1}{2} \|u\|^2_{H^1}- \frac{1}{p}\int_{\Omega} a(|x|) |u|^p dx\geq  \frac{1}{2} \|u\|^2_{H^1}-  \frac{C_0}{p} \|u\|^p_{V}\notag\\
 & \geq \frac{1}{2C^2} \|u\|^2_{V}-  \frac{C_0}{p} \|u\|^p_{V}=    \frac{1}{2C^2} \rho^2-  \frac{1}{p}\rho^p >0, \notag
 \end{align}
 provided  $\rho>0$ is  small enough, since $p>2$.  If $u\notin K$, then clearly $E_K(u)>0$.  Thus, (MPG) holds for the functional $E_K$.  It now follows from Theorem (\ref{MPT}) that $E_K$ has a critical point $\bar u \in K,$  with 
 $E_K(\bar u)= c>0$ where the critical value $c$ is characterized by
\begin{equation} \label{critical value}c= \inf_{\gamma\in \Gamma}\max_{t\in [0,1] }E_K[\gamma(t)],\end{equation}
where \[\Gamma= \{\gamma\in C([0,1], V) : \gamma(0)=0\neq \gamma(1), E_K(\gamma(1))\leq 0\}.\]
Since  $E_K(\bar u)>0$,  we have that $\bar u$ is non-zero.  Since $\bar{u}$ is a critical point of $E_K$,   it follows from Definition \ref{cp}   that 
\begin{equation}\label{p1}
 \CO{D \Phi(\bar{u})}{ \bar{u}-v}+ \Psi_K(v)- \Psi_K(\bar{u}) \geq 0, \qquad \forall v\in V.
\end{equation}
On the other hand, by (ii), there exists $\bar v \in K$ satisfying 
\begin{equation}\label{eqpp}
\left\{\begin{array}{ll}
-\Delta \bar  v+\lambda \bar v =  a(x)|\bar{u}|^{p-2}\bar{u}, &   x\in \Omega \\
\bar v= 0, &    x\in  \partial \Omega,
\end {array}\right.
\end{equation}
in the weak sense. 
By setting $v= \bar{v}$ in (\ref{p1}) we obtain that 
\begin{eqnarray*}
\frac{1}{2}\int_{\Omega} |\nabla \bar{v}|^2 +\lambda \bar v^2dx  -\frac{1}{2}\int_{\Omega} |\nabla \bar{u}|^2+\lambda \bar u^2 dx\notag  &\geq & \int_{\Omega}a(x)|\bar{u}|^{p-2}\bar{u}(\bar{v}- \bar{u})\, dx
 \\&=& \int_{\Omega}\nabla\bar{v} \cdot \nabla(\bar{v}- \bar{u})+\lambda \bar v(\bar v-\bar u)\, dx
\end{eqnarray*}
where the last equality  follows from (\ref{eqpp}).
Therefore, 

\begin{equation}\label{p2}
\frac{1}{2}\int_{\Omega} |\nabla\bar{v}- \nabla \bar{u}|^2\, dx+\frac{\lambda }{2}\int_{\Omega} |\bar{v}- \bar{u}|^2\, dx\leq 0,
\end{equation}
which implies that $\bar{u}= \bar{v}$. Taking into account that $\bar{u}= \bar{v}$ in (\ref{eqpp}) we have that $\bar u$ is a weak solution of 
(\ref{eqthm}):
\begin{equation}\label{pscb}
\left\{\begin{array}{ll}
-\Delta u+\lambda u=  a(x)|u|^{p-2}u, &   x\in \Omega, \\
u= 0, &    x\in  \partial \Omega.
\end {array}\right.
\end{equation} \\

\hfill $\square$

%In this work we only consider the H\'enon equation on the unit ball in $ \IR^N$ where we work with functions in $K_+$.  This allows us to construct nonradial solutions under some assumptions on the parameters.  
%In Section  \ref{henon_full_section} we examine (\ref{henon_full_t}).  By working on $K_+$ we obtain nonradial solutions on a range of $p$.   In Section  \ref{singular_sect} we examine (\ref{eq_sing_pot_t}) for $ \alpha>2$ and we work on $K_+$ to obtain a nonradial solution on a range of supercritical $p$. 

%In Section \ref{dim+1} \textbf{((THIS IS FOR EXTRA SECTION....))}we consider the case where $m \neq n$.  In particular we examine the case where $N=2n+1$ and $ s$ and $t$ are both dimension $n$ and then we add one more dimension.  We can now view the coordinate $(s,t,\tau)$ is spherical coordinates where now we consider functions which are monotonic in $ \phi$ ((PUT THE NAME OF $ \phi$ here )).  Again we consider the case of $K_-$ for general annular domains and $K_+$ for the annulus.  

\section{Domains of double revolution} \label{double_dom}  In this section we gather some information about the domains of double and higher revolution. We also state and prove useful embedding theorems for these type of domains.\\ We start by domains of double revolution. Consider writing $ \IR^N=\IR^{m} \times \IR^{n} $ where $ m,n \ge 1$ and $m+n=N$.
We define the variables $s$ and $t$ by
\[ s:= \left\{ x_1^2 + \cdot \cdot \cdot  + x_{m}^2\right\}^\frac{1}{2}, \qquad t:=\left\{x_{m+1}^2 + \cdot \cdot \cdot + x_N^2 \right\}^\frac{1}{2}.  \] We say that $\Omega \subset \IR^N$ is a \emph{domain of double revolution} if it is invariant under rotations of the first $m$ variables and also under rotations of the last $n$ variables.  Equivalently, $\Omega$ is of the form $ \Omega=\{ x \in \IR^N: (s,t) \in U \}$ where $U$ is a domain in $ \IR^2$ symmetric with respect to the two coordinate axes.  In fact,
\[ U= \big \{ (s,t) \in \IR^2:  x=(x_1=s, x_2=0, ... , x_m=0,  x_{m+1}=t, ... , x_N =0 ) \in \Omega \big \},\] is the intersection of $\Omega$ with the $(x_1,x_{m+1})$ plane.  Note that $U$ is smooth if and only if $\Omega$ is smooth. 
%We comment that normally one restricts $ m,n \ge 2$ but we are allowing the case of $n=1$;  in this case we can think of the domain as symmetric with respect to $ x_N=0$. 
We denote $\widehat{\Omega}$ to  be the intersection of $U$ with the first quadrant of $\IR^2$, that is,
\begin{equation}\label{omegahat}\widehat{\Omega}=\big\{(s,t) \in U: \,\, s> 0, \,\,  t>0 \big\}.
\end{equation}

Using polar coordinates we can write $ s= r \cos(\theta),$ $ t = r \sin(\theta)$ where $ r=|x|= |(s,t)|$ and $ \theta$ is the usual polar angle in the $ (s,t)$ plane. \\ 

 All domains will be bounded domains in $ \IR^N$ with smooth boundary unless otherwise stated.   To describe the domains in terms of the above polar coordinates we will write \begin{equation}\label{omegahat+} \widetilde{\Omega}:=\big \{ (\theta,r): (s,t) \in \widehat{\Omega} \big \}.  
\end{equation}
Define  \[ H^1_{0,G}:=\left\{ u \in H^1_0(\Omega): gu =u \quad \forall g \in G \right\},\] where $G:=O(m) \times O(n)$ where $ O(k)$ is the orthogonal group in $ \IR^k$ and $ gu(x):=u( g^{-1} x)$.

In \cite{orgin_an} we have considered annular domains and annular domains with monotonicity via the following definition:
\begin{definition} \label{def_an_pi2} We refer to a domain of double revolution in $\R^{N}$ with $N=m+n$ an annular domain if its  associated domain $\widehat \Omega$ in the $(s,t)$ plane in $\R^2$ is of  the form  
\begin{equation} %\label{double_polar}
\widetilde{\Omega}=\left\{ (\theta,r): g_1(\theta)< r < g_2(\theta), \theta \in \left(0, \frac{\pi}{2}\right) \right\} 
\end{equation} 
 in polar coordinates.  Here  $ g_i>0$ is smooth on $ [0, \frac{\pi}{2}]$ with $ g_i'(0)=g_i'( \frac{\pi}{2})=0$ and $ g_2(\theta)> g_1(\theta)$ on $ [0, \frac{\pi}{2}]$.   We  call  $\Omega$ an  \emph{annular domain with monotonicity} if  $ g_1$ is increasing and $ g_2$ is decreasing on $ (0, \frac{\pi}{2})$. 
 \end{definition}  
 To distinguish these domains from the new ones we will refer to these as $ \frac{\pi}{2}$ annular domains with and without monotonicity.     We proved the following imbeddings: 
 \begin{thm*} \cite{orgin_an} Let $\Omega$ denote  a $\frac{\pi}{2}$ annular domain in $ \IR^N$.

\begin{enumerate}  \item (Imbedding without monotonicity) Suppose $\Omega$ has no monotonicity and  \[ 1 \le p < \min \left\{ \frac{2(n+1)}{n-1},  \frac{2(m+1)}{m-1} \right\}.\] Then 
$ H^1_{0,G}(\Omega) \subset \subset  L^p(\Omega)$ with the obvious interpretation in the case of $m=n=1$.

\item (Imbedding with monotonicity) Suppose $ \Omega$ has monotonicity,  $ n \le m$ and   
\[ 1 \le p<  \frac{2(n+1)}{n-1} =\max  \left\{ \frac{2(n+1)}{n-1},  \frac{2(m+1)}{m-1} \right\}.\]
Then $K_{-,\frac{\pi}{2}}  \subset \subset L^p(\Omega)$  with the obvious interpretation if $n=1$ where 
\[ K_{-,\frac{\pi}{2}}=\left\{ 0 \le u \in H^1_{0,G}(\Omega): u_\theta \le 0 \mbox{ a.e. in } \widetilde{\Omega} \right\}.\]

\end{enumerate} 
\end{thm*}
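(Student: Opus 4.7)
The plan is to recast everything on the two-dimensional slice $\widehat\Omega$. Using the $O(m)\times O(n)$ invariance, every $u\in H^1_{0,G}(\Omega)$ factors as $u(x) = f(s,t)$, and the usual polar decompositions in each block turn the $L^p$ and Dirichlet integrals into weighted two-dimensional integrals,
\[
\int_\Omega |u|^p\, dx \ =\ c_{m,n}\!\int_{\widehat\Omega} |f|^p\, s^{m-1} t^{n-1}\, ds\, dt,\qquad \int_\Omega |\nabla u|^2\, dx \ =\ c_{m,n}\!\int_{\widehat\Omega} (f_s^2+f_t^2)\, s^{m-1} t^{n-1}\, ds\, dt.
\]
Since the annular description $g_1(\theta)\le r\le g_2(\theta)$ keeps $r$ bounded away from $0$ and $\infty$, the weight $s^{m-1} t^{n-1} = r^{m+n-2}\cos^{m-1}\theta\sin^{n-1}\theta$ degenerates only at the two coordinate axes $\theta=0$ and $\theta=\pi/2$.

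For Part~(1) I cover $\widetilde\Omega$ by the two open sets $\Omega_1=\{\theta<\pi/3\}$ and $\Omega_2=\{\theta>\pi/6\}$. On $\Omega_1$ the factor $s=r\cos\theta$ is bounded below by a positive constant, so $s^{m-1}$ is comparable to a constant and only $t^{n-1}$ remains delicate. Lifting by $F(s,y):=f(s,|y|)$ with $y\in \IR^n$ produces the identities
\[
\int F(s,y)^p\, ds\, dy \ =\ |S^{n-1}|\!\int f(s,t)^p\, t^{n-1}\, ds\, dt,\qquad \int |\nabla F|^2\, ds\, dy \ =\ |S^{n-1}|\!\int (f_s^2+f_t^2)\, t^{n-1}\, ds\, dt,
\]
so $F$ belongs to $H^1$ of a subdomain of $\IR^{n+1}$ with zero boundary data inherited from $u\in H^1_0(\Omega)$, and the $(n+1)$-dimensional Rellich--Kondrachov embedding yields compactness into $L^p$ for every $p<2(n+1)/(n-1)$. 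The symmetric argument on $\Omega_2$, lifting in the $m$-dimensional variable, delivers compactness for $p<2(m+1)/(m-1)$. Summing the two estimates gives Part~(1).

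For Part~(2), assume $n\le m$, so the worse exponent $2(m+1)/(m-1)$ naively comes from the region $\Omega_2$ near $\theta=\pi/2$. I use the monotonicity $u_\theta\le 0$ to transfer the control on $\Omega_2$ to a middle strip where the ordinary (unweighted) two-dimensional Sobolev embedding gives compactness into every $L^p$. Precisely, for each fixed $r$ and each $\theta_2\in[\pi/4,\pi/2]$, monotonicity gives $f(r,\theta_2)\le f(r,\pi/4)\le f(r,\theta_1)$ for every $\theta_1\in[\pi/8,\pi/4]$. Integrating the first inequality against the weighted measure on $\{\theta\ge\pi/4\}$ (the angular integral $\int_{\pi/4}^{\pi/2}\cos^{m-1}\theta\sin^{n-1}\theta\, d\theta$ is a finite constant), and averaging the second inequality against $d\theta_1$ on $[\pi/8,\pi/4]$ (where $\cos^{m-1}\theta_1\sin^{n-1}\theta_1$ is bounded below by a positive constant), yields
\[
\int_{\{\theta\ge\pi/4\}} |u|^p\, s^{m-1} t^{n-1}\, ds\, dt \ \le\ C\!\int_{\{\pi/8<\theta<\pi/4\}} |u|^p\, s^{m-1} t^{n-1}\, ds\, dt.
\]
On the middle slab both $s$ and $t$ are bounded above and below, so the weights are comparable to constants and ordinary two-dimensional Rellich--Kondrachov produces compactness into every $L^p$, $p<\infty$. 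Combining with the $\Omega_1$-estimate from Part~(1), which already supplies the improved exponent $2(n+1)/(n-1)\ge 2(m+1)/(m-1)$, gives the claimed compact embedding $K_{-,\pi/2}\hookrightarrow L^p(\Omega)$ for all $p<2(n+1)/(n-1)$.

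The main obstacle lies in justifying the lifting step on $\Omega_1$ and $\Omega_2$: one has to verify that the hypotheses $g_i'(0)=g_i'(\pi/2)=0$ guarantee that when the two-dimensional half-slice is lifted back (say, in the $y\in\IR^n$ variable near $\theta=0$) the resulting $(n+1)$-dimensional domain has a Lipschitz (in fact smooth) boundary, so that the zero boundary values $f|_{\partial\widehat\Omega\cap\{t>0\}}=0$ translate into legitimate Dirichlet data for $F$ and the standard Sobolev embedding applies. A related subtlety in Part~(2) is upgrading the pointwise monotonic domination to compactness under weak convergence in $H^1_{0,G}$: one extracts a weakly convergent, a.e.\ convergent subsequence inside $K_{-,\pi/2}$, uses the pointwise monotonicity to obtain uniform equi-integrability of $|u_k|^p$ against the weighted measure on $\Omega_2$, and concludes by Vitali's convergence theorem.
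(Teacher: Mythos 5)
Your proof is essentially correct and follows the same two-step strategy the paper itself uses for its closely related imbeddings: Theorem~A is cited here from \cite{orgin_an}, but the paper's own proof of Theorem~\ref{embed-mrev} is the $m$-revolution version of your Part~(1), and the proof of Proposition~\ref{imbed_pi4_ann}(2) uses precisely your Part~(2) mechanism of shifting the angular variable by monotonicity to transfer the integral into a slab where the weighted measure is nondegenerate. Where the paper splits by $\{|y_i|\ge\delta\}$ and converts the $i$-th block to polar form, you split by angular sectors $\{\theta<\pi/3\}$, $\{\theta>\pi/6\}$ and lift $f(s,t)\mapsto f(s,|y|)$; these are the same computation packaged differently, and in Part~(2) the inequality $f(r,\theta_2)\le f(r,\theta_1)$ for $\theta_1<\theta_2$ is exactly the ``$u(r,\theta)\le u(r,\theta+\pi/8)$'' step used in Proposition~\ref{imbed_pi4_ann}. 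Two details you flag are genuine and handled at the same informal level as the paper: (i) the condition $g_i'(0)=g_i'(\pi/2)=0$ is needed so that the lifted domain in $\IR^{n+1}$ is smooth at the axis, and (ii) the upgrade from boundedness to compactness is done in the paper (see the proof of Lemma~\ref{compsingu}) by compact embedding into $L^1$ plus interpolation, which is an alternative to your Vitali argument but gives the same conclusion. One small cleanup you could make in Part~(2): since the middle strip $\{\pi/8<\theta<\pi/4\}$ is already contained in $\Omega_1$, the transfer inequality already reduces everything to the $\Omega_1$ estimate, so the intermediate appeal to two-dimensional Rellich--Kondrachov for all $p<\infty$ is not needed to close the argument.
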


\begin{remark} \label{remark_imbed_mon} \begin{enumerate} \item  The above imbedding makes sense with a bit of heuristics.    Consider an annular domain in $ \IR^N$ with $N=m+n$ and we suppose $ n \le m$. Suppose  we are given a sequence of  functions $ 0  \le u_k \in H^1_{0,G}(\Omega)$.  If the functions concentrate near $ t=0$  then the problem looks like a problem in dimension $ n+1$ (ie. the $t$ variable has dimension $n$ and the $s$ variable has dimension $1$ since we are away from $ s=0$) and hence the critical Sobolev exponent $ \frac{2(n+1)}{(n+1)-2}$ should play a role.   The functions can also concentrate near $ s=0$ and then the relevant exponent is $ \frac{2(m+1)}{(m+1)-2}$.  The functions can also concentrate in other regions but they are of lower dimension and hence doesn't limit the imbedding.   This suggests part 1 of Theorem A. 

\item To see part 2 of Theorem A we note that we now have monotonicity in $ \theta$ and hence the functions only have the option to concentrate on $ \theta=0$ or on the $ s$ axis and hence this gives the improved result. 
\end{enumerate}
\end{remark}

Before going into more details we give some more background on domains of double revolution. 

  Assume $\Omega$ is a domain of double revolution and $ v$ is a function defined on $ \Omega$ that just depends on $ (s,t)$,  then one has 
\[ \int_\Omega v(x) dx = c(m,n) \int_{\widehat{\Omega}} v(s,t) s^{m-1} t^{n-1} ds dt,\]
where $c(m,n)$ is a positive constant depending on $n$ and $m.$ Note that strictly speaking we are abusing notation here by using the same name; and we will continuously do this in this article. Given a function $v$ defined on $ \Omega$ we will write $ v=v(s,t)$ to indicate that the function has this symmetry.   
%Define  \[ H^1_{0,G}:=\left\{ u \in H^1_0(\Omega): gu =u \quad \forall g \in G \right\},\] where $G:=O(m) \times O(n)$ where $ O(k)$ is the orthogonal group in $ \IR^k$ and $ gu(x):=u( g^{-1} x)$. 

To solve equations on domains of double  revolution one needs to relate the equation to a new one on $ \widehat{\Omega}$ defined in (\ref{omegahat}). Suppose $\Omega$  is a  domain of double revolution and $ f$ has is function defined on $\Omega$ with the same symmetry 
(ie. $ gf(x)= f(g^{-1} x)$ all $ g \in G$). 
Suppose that $u(x)$ solves 
\begin{equation} \label{eq_double_lin}
\left\{\begin{array}{ll}
-\Delta u(x) = f(x) &  \mbox{ in } \Omega, \\
u= 0 &   \mbox{ on } \pOm.
\end {array}\right.
\end{equation} Then $ u=u(s,t)$ and $u$ solves 
\begin{equation} \label{eq_double_lin_st}
-u_{ss}-u_{tt}- \frac{(m-1) u_s}{s}-\frac{(n-1) u_t}{t} = f(s,t)   \mbox{ in } \widehat{\Omega}, 
\end{equation} with $u=0$ on $ (s,t) \in \partial \widehat{\Omega} \backslash ( \{s=0\} \cup \{t=0\} )$.  If $u$ is sufficiently smooth then $ u_s =0 $ on $\partial \widehat{\Omega} \cap \{s=0\}$ and $u_t=0$ on $ \partial \widehat{\Omega} \cap \{t=0\}$ after considering the symmetry properties of $u$.  \\

One can easily refine  the notion of the domain of double revolution  to domains of $m$ revolution.\\

\noindent 
\textbf{Domains of $m$ revolution.}   Consider writing $ \IR^N = \IR^{n_1} \times \IR^{n_2} \times \cdot \cdot \cdot \times \IR^{n_m}$ where $ n_1 + \cdot \cdot \cdot + n_m=N$ and $n_1,...,n_m\geq 1.$
We say that $\Omega \subset \IR^N$ is a \emph{domain of $m$ revolution} if it is invariant under rotations of the first $n_1$ variables, the next $n_2$ variables, ..., and finally in the last $n_m$ variables.  We define the variables $ t_i$ via
\[ t_1^2:={ x_1^2+ \cdot \cdot \cdot  + x_{n_1}^2}, \quad t_2^2:= { x_{n_1+1}^2 + \cdot \cdot \cdot  + x_{n_1+n_2}^2},\] and similar for $ t_i$ for $ 3 \le i <m$. Finally we define
\[ t_m^2:= { \sum_{k=n_1+ n_2 + \cdot \cdot \cdot  + n_{m-1} +1}^N x_k^2}.\]
We now define
\begin{eqnarray*} U= \Big \{ t   \in \IR^m;  x=(x_1,...,x_N) \in \Omega, \text{ where }x_1=t_1,\,  x_{n_1+n_2+ \cdot \cdot \cdot  +n_{k-1} +1}= t_k \text{ for  } 2\le k\leq m,\text{ and }\\ x_i=0 \text{ for } i\not=1, n_1+1, n_1+n_2+1,..., n_1+n_2+...+n_{m-1}+1 \Big \}.\end{eqnarray*}
 We define $ \widehat{\Omega} \subset \IR^m$ to be the intersection of $U$ with the first sector of $ \IR^m$. We now define the appropriate measure
\[ d \mu_m(t) = d \mu_m^{(n_1, ... ,n_m)}(t_1, ... , t_m) = \prod_{k=1}^m t_k^{n_k-1} d t_k.\]  Given any function $v$ defined in $\Omega$, that depends only on the radial variables $t_1,t_2,..,t_m$ one has
\[ \int_\Omega v(x) dx = c(n_1,...,n_m) \int_{\widehat{\Omega}}v(t)  d \mu_m(t),\]
where $c(n_1,...,n_m)$ just depends on $n_1,...,n_m.$
Given that $\Omega \subset \R^N$  is a domain of $m$ revolution with $\sum_{i=1}^m n_i=N$, let
\[ G:= O(n_1) \times O(n_2) \times ... \times O(n_m),\]  where $O(n_i)$ is  the orthogonal group in $ \IR^{n_i}$ and
 consider
 \[ H^1_{0,G}:=\left\{ u \in H^1_0(\Omega): gu =u \quad \forall g \in G \right\},\]  where $gu(x):=u(g^{-1} x)$.  If $u \in H^1_{0,G} $ then
$u$ has symmetry compatibility  with $ \Omega$, ie. $u(x)$ depends on just $ t_1,...,t_m$ and we write this as  $u(x)=u(t_1,...,t_m)$ where $(t_1,...,t_m)\in \widehat{\Omega}.$
 We have the following embedding result for the domains of $m$
 revolution.
 
  \begin{thm} \label{embed-mrev}
  Let $\Omega$ denote  a bounded    domain of $m$ revolution in $ \IR^N$ with $N=n_1+...+n_m$ and $n_i\geq 1$ such that $0\not \in \bar \Omega.$
  Assume that  \[ 1 \le p < \min \left\{ \frac{2(N-n_i+1)}{N-n_i-1}; \,\, i=1,...,m\right \}.\] Then 
$ H^1_{0,G}(\Omega) \subset \subset  L^p(\Omega)$ with the obvious interpretation in the case of $N-n_i=1$. 
\end{thm}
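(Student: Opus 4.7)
The plan is to cover $\overline{\Omega}$ by finitely many $G$-invariant open sets on each of which the symmetry collapses an entire rotation block into one radial coordinate, so that the embedding reduces to a classical Rellich--Kondrachov embedding in ambient dimension $N - n_i + 1$. Since $0 \notin \overline{\Omega}$, there exists $\delta > 0$ with $\sum_{j=1}^m t_j^2 \ge \delta^2$ on $\widehat{\Omega}$; hence at every point at least one $t_i$ satisfies $t_i \ge \delta/\sqrt m$, so the $G$-invariant open sets
\[ V_i := \{\,x \in \Omega :\; t_i(x) > \delta/(2\sqrt m)\,\}, \qquad i = 1, \ldots, m, \]
cover $\overline{\Omega}$. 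Choose a $G$-invariant smooth partition of unity $\{\chi_i\}$ subordinate to $\{V_i\}$; since $u \mapsto \chi_i u$ is bounded on $H^1_{0,G}(\Omega)$, it suffices to prove precompactness of $(\chi_i u_k)$ in $L^p(\Omega)$ for each fixed $i$.

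Fix $i$ and set $N' := N - n_i + 1$. Pick a unit vector $\mathbf{e}_i \in \IR^{n_i}$ and, for a $G$-invariant $u$, define
\[ \widetilde u\bigl(y^{(1)},\ldots,y^{(i-1)},r,y^{(i+1)},\ldots,y^{(m)}\bigr) := u\bigl(y^{(1)},\ldots,y^{(i-1)},r\mathbf{e}_i,y^{(i+1)},\ldots,y^{(m)}\bigr), \]
on the image set $\widetilde V_i \subset \IR^{N'}$ of $V_i$ under this coordinate map. Because $u$ is $O(n_i)$-invariant in the $i$-th block, $\widetilde u$ encodes $u$ faithfully. Writing $x^{(i)} = r\omega$ with $\omega \in S^{n_i-1}$ and integrating over $\omega$, a short computation based on $|\nabla u|^2 = \sum_j (\partial_{t_j} v)^2$ (for $u = v(t_1,\ldots,t_m)$) yields
\[ \int_{\Omega}|\nabla u|^2\,dx = |S^{n_i-1}| \int_{\widetilde V_i} |\nabla \widetilde u|^2 \, r^{n_i-1}\,dy\,dr, \]
together with the analogous identities for $\int u^2$ and $\int |u|^p$. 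On $V_i$ the weight $r^{n_i - 1}$ is pinched between two positive constants depending only on $\delta$, $n_i$, and $\mathrm{diam}(\Omega)$, so these weighted norms are equivalent to the unweighted $H^1$ and $L^p$ norms of $\widetilde u$ on the bounded set $\widetilde V_i \subset \IR^{N'}$.

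Given a bounded sequence $(u_k) \subset H^1_{0,G}(\Omega)$, extend each $\widetilde{\chi_i u_k}$ by zero to a bounded sequence in $H^1_0$ of a fixed bounded subdomain of $\IR^{N'}$. Standard Rellich--Kondrachov in dimension $N'$ provides a subsequence converging strongly in $L^p(\IR^{N'})$ for every $p < 2N'/(N'-2) = 2(N-n_i+1)/(N-n_i-1)$, which via the norm equivalence transfers to convergence of $\chi_i u_k$ in $L^p(\Omega)$. Under the hypothesis $p < \min_i 2(N-n_i+1)/(N-n_i-1)$, a diagonal extraction over $i = 1, \ldots, m$ produces a single subsequence along which $\chi_i u_k$ converges in $L^p(\Omega)$ for every $i$; summing $u_k = \sum_i \chi_i u_k$ gives strong convergence in $L^p(\Omega)$. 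The main technical obstacle is carrying out the reduction/norm-equivalence step cleanly---verifying that $\widetilde u$ really does lie in an ordinary Sobolev space on a bounded subdomain of $\IR^{N'}$ with norms comparable to those of $u$ on $V_i$. The degenerate case $N - n_i = 1$ (i.e.\ $N' = 2$) falls under the "obvious interpretation" in the statement, since two-dimensional Rellich--Kondrachov gives compactness for every $p < \infty$.
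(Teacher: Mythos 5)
Your argument is correct and runs along exactly the same lines as the paper's proof: cover $\Omega$ by the regions where the $i$-th radial variable $t_i$ is bounded below (using $0\notin\bar\Omega$), note that on each such region the Jacobian weight $r^{n_i-1}$ is pinched between positive constants so the problem reduces to a Sobolev/Rellich--Kondrachov statement in ambient dimension $N-n_i+1$, and combine over $i$. The only difference is cosmetic: you localize with a $G$-invariant partition of unity and spell out the diagonal extraction to obtain compactness explicitly, whereas the paper splits the integral $\int_\Omega|u|^p\,dx\le\sum_i\int_{\{|y_i|\ge\delta\}}|u|^p\,dx$ directly and presents the estimate in a form that yields continuity, leaving the compactness step implicit; your version is a reasonable and slightly more careful write-up of the same idea.
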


\begin{proof} 
Assume that $x=(y_1,...,y_m) \in \R^N=\Pi_{i=1}^m\R^{n_i}$.  Let $R_1$ and $R_2$ be such that $0<R_1<|x|<R_2$
for all $x \in \Omega.$
Choose  $\delta$ small enough such that $\sqrt{m}\delta<R_1.$   It then follows that for each $x=(y_1,...,y_m)\in \Omega $ we have that $|y_i|\geq \delta$ for at least one $i\in \{1,...,m\}.$
Therefore, 
\begin{eqnarray*}
\int_\Omega |u|^p\, dx\leq \Sigma_{i=1}^m \int_{\Omega,\, |y_i|\geq \delta} |u|^p\, dx
&\leq &\Sigma_{i=1}^m c_i \int_{\Omega,\, |r_i|\geq \delta}r_i^{n_i-1} |u(y_1,..., y_{i-1}, r_i, y_{i+1},..,y_m)|^p\, dy_1\,...dr_i...dy_m \\
&\leq &\Sigma_{i=1}^m c_iR_2^{n_i-1} \int_{\Omega,\, |r_i|\geq \delta} |u(y_1,..., y_{i-1}, r_i, y_{i+1},..,y_m)|^p\, dy_1\,...dr_i...dy_m
\end{eqnarray*}
for appropriate constants $c_i$.
Morovere, for 
\[1<p< \frac{2(N-n_i+1)}{N-n_i-1},\]
we have that 
\[\int_{\Omega,\, |r_i|\geq \delta} |u(y_1,..., y_{i-1}, r_i, y_{i+1},..,y_m)|^p\, dy_1\,...dr_i...dy_m \]
is being controlled by the $H_1(\Omega,\, |r_i|\geq \delta)$.
On the other hand 
\begin{eqnarray*}
\int_{\Omega,\, |r_i|\geq \delta}\left( |\nabla u(y_1,..., y_{i-1}, n_i, y_{i+1},..,y_m)|^2+|u(y_1,..., y_{i-1}, r_i, y_{i+1},..,y_m)|^2\right)dy_1\,...dr_i...dy_m &\leq & \\
 \delta^{-n_i+1}\int_{\Omega,\, |r_i|\geq \delta} r_i^{n_i-1} \left(|\nabla u(y_1,..., y_{i-1}, r_i, y_{i+1},..,y_m)|^2+|u(y_1,..., y_{i-1}, r_i, y_{i+1},..,y_m)|^2\right)dy_1\,...dr_i...dy_m &\leq &\\
 C_i\delta^{-n_i+1}\int_{\Omega}  \left(|\nabla u(y_1,..., y_{i-1}, y_i, y_{i+1},..,y_m)|^2+|u(y_1,..., y_{i-1}, y_i, y_{i+1},..,y_m)|^2\right)dy_1\,...dy_i...dy_m &=&\\
C_i\delta^{-n_i+1}\|u\|^2_{H_1(\Omega)},
\end{eqnarray*}
for appropriate constants $C_i.$  This completes the proof. 
\end{proof}

\section{Supercritical elliptic problems on domains of double revolution} \label{section_double_annular} 
 In this section we examine the  equation  
 
 \begin{equation} \label{eq_fir}
\left\{\begin{array}{ll}
-\Delta u = a(x) u^{p-1} &  \mbox{ in } \Omega, \\
u= 0 &   \mbox{ on } \partial \Omega,
\end {array}\right.
\end{equation}
 where  $ \Omega$ is a domain of double revolution in $\R^N=\R^n\times\R^n$.   Note when $m=n$, Theorem A does not show any improvements in compactness when using monotonicity.   In this case the equation has a certain invariance across $ \theta=\frac{\pi}{4}$ and this suggests one examine domains with a certain invariance also.  This brings us to a first type of new domains.  
 
 \begin{definition} \label{pi4} We will call a domain of double revolution in $ \IR^N$ a \emph{$\frac{\pi}{4}$-annular domain with monotonicity} provided the domain is an annular domain via Definition  \ref{def_an_pi2} (ie. $ g_i>0$ is smooth on $ [0, \frac{\pi}{2}]$ with $ g_i'(0)=g_i'( \frac{\pi}{2})=0$ and $ g_2(\theta)> g_1(\theta)$ on $ [0, \frac{\pi}{2}]$) and $g_1$ is increasing and $g_2$ is decreasing on $(0,\frac{\pi}{4})$ and both $g_1,g_2$ are even across $ \theta=\frac{\pi}{4}$.   For these new domains we define a suitable subset of $ \widetilde{\Omega}$ given by 
 \begin{equation}
 \label{tilde_half}
\widetilde{\Omega}_0=\left\{(\theta,r):  g_1(\theta)<r<g_2(\theta), 0<\theta < \frac{\pi}{4} \right\}.
\end{equation}

 \end{definition}

   We now are in a position to define the class of functions we work on in this setting. 
\begin{enumerate}
    \item ($K_-$) In the case of $\Omega$ a $\frac{\pi}{4}$-annular domain with monotonicity (see Definition \ref{pi4}) we define $K_-$ to be  the set of nonnegative functions $ u \in H_{0,G}^1(\Omega)$ with $ u_\theta \le 0$ in $ \widetilde{\Omega}_0$ and which are even across $ \theta=\frac{\pi}{4}$.

    \item ($K_+$) In the case of $\Omega$ an annulus we define $K_+$  to be  the set of nonnegative functions $ u \in H_{0,G}^1(\Omega)$ with $ u_\theta \ge 0$ in $ \widetilde{\Omega}_0$ and which are even across $ \theta=\frac{\pi}{4}$.  
    \end{enumerate} 
     Note $K_-$ is defined for an annulus and a more general annular domain with the added assumptions where as   we only define $K_+$ for an annulus.  Our approach utilizing $K_+$ will fail on a more general annular domain.      The imbeddings we prove regarding $K_-$ are essentially the same as Theorem A.  For $K_+$ one expects to get more.    Before we state our main theorem for this section we need to define a quantity that will be relevant to showing the ground states on radial domains are nonradial and this quantity will be relevant for the equaitons that follow in later sections also.  
   Indeed, we define  
    \begin{equation} %\label{hardyy0}
\beta_0(\Omega):=\inf_{u\in H_0^1(\Omega)}
\frac{\int_\Omega |\nabla u|^2 \, dx}{\int_\Omega \frac{u^2}{|x|^2} \, dx}. \end{equation}  Note this quantity  is just the best constant in the classical Hardy inequality.  So if $ 0 \in \Omega$ or $ \Omega$ is an exterior domain then $ \beta_0(\Omega) = \frac{(N-2)^2}{4}$.

    %\textbf{((ADDED GROUND STATE SOLUTION TO WORDING IN FOLLOWING))} 
    \begin{thm} \label{theorem-nonlinear-annular} Let  $\Omega$ be a bounded domain in $\R^N$ with $N=2n.$   
    
    \begin{enumerate} \item Suppose $\Omega$ is a bounded $\frac{\pi}{4}$-annular domain with monotonicity, $ a=a(s,t) $ is positive and sufficiently smooth and $ a_\theta \le 0$ in $ \widetilde{\Omega}_0$. Then for all 
   \[ 2 < p < \frac{2N+4}{N-2},\] there is a positive classical $K_-$ ground state solution $u $ of (\ref{eq_fir}).   Note this case includes the case of $\Omega$ an annulus. 
   
   \item Suppose $\Omega$ is an annulus with $ a=a(s,t)$ positive and sufficiently smooth and $ a_\theta \ge 0 $ in $ \widetilde{\Omega}_0$. 
   \begin{itemize}
   \item[2-a] Then for all $2<p<\infty$ there is a positive   classical $K_+$ ground state solution $ u $ of (\ref{eq_fir}).  
   
   \item[2-b]  Moreover, if $a$ is a radial function then  for 
   \[ \frac{4(N+2)}{\beta_0(\Omega)} < p < \infty,\]
   the ground state solution $ u $ in $\it{2}$-$\it{a}$ in nonradial.   
   \end{itemize}
    \end{enumerate} 
    
\end{thm}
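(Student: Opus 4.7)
The plan is to deduce Parts 1 and 2-a from the abstract variational principle (Theorem \ref{var-pri}) applied with $K=K_-$ and $K=K_+$ respectively, and to establish Part 2-b by ruling out a radial $K_+$-ground state through a second-variation computation invoking the Hardy inequality.

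For the application of Theorem \ref{var-pri}, the two hypotheses to check are (i) the compact embedding $K\hookrightarrow L_a^p(\Omega)$ and (ii) the pointwise invariance of $K$. For (i) in Part 1, elements of $K_-$ are even across $\theta=\pi/4$ and monotone on $\widetilde{\Omega}_0$, so the analysis reduces to the half-cell and the monotonic case of Theorem A applies; with $m=n$, the sharp exponent is $2(n+1)/(n-1)=(2N+4)/(N-2)$. For (i) in Part 2-a, the key new observation is that every $u\in K_+$ is dominated pointwise by its trace $u(r,\pi/4)$ on the codimension-one slice $\{\theta=\pi/4\}$ (because $u$ is increasing in $\theta$ on $(0,\pi/4)$ and even across $\pi/4$); combining this bound with a Sobolev trace embedding on that slice yields compactness in $L_a^p$ for every $p<\infty$. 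Supplying this new imbedding is part of the proof.

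The main technical obstacle is (ii). Given $\bar u\in K_\pm$, define $\bar v\in H^1_0(\Omega)$ as the weak solution of $-\Delta\bar v=a|\bar u|^{p-2}\bar u$. Nonnegativity, $G$-invariance, and evenness across $\theta=\pi/4$ follow from the maximum principle, the symmetry of the data, and uniqueness. To preserve the sign of $\bar v_\theta$ on $\widetilde{\Omega}_0$, decompose $\Delta=\partial_r^2+(N-1)r^{-1}\partial_r+r^{-2}L_\theta$ with $L_\theta=\partial_\theta^2+2(n-1)\cot(2\theta)\partial_\theta$, and differentiate the equation in $\theta$; a commutator computation yields
\[-\Delta\bar v_\theta+\frac{4(n-1)}{r^2\sin^2(2\theta)}\,\bar v_\theta=(a\bar u^{p-1})_\theta,\]
with a nonnegative zeroth-order coefficient. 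A maximum principle then propagates the sign of the right-hand side (controlled by the monotonicity assumed on $a$ and $\bar u$) to $\bar v_\theta$, once one checks that $\bar v_\theta$ vanishes at $\theta=0,\pi/4$ by symmetry and has the correct sign on the annular arcs $r=g_i(\theta)$ via the monotonicity of $g_1,g_2$. Classical regularity and strict positivity of the MPT critical point then follow from a Moser-type bootstrap using the improved imbeddings, together with the strong maximum principle.

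For Part 2-b, suppose for contradiction that the ground state $u$ is radial. Along any ray $s\mapsto sw$ the mountain-pass maximum equals $\sigma J(w)$, where $\sigma=(p-2)/(2p)$ and $J(w)=\bigl(\int|\nabla w|^2\,dx\bigr)^{p/(p-2)}\bigl(\int aw^p\,dx\bigr)^{-2/(p-2)}$, so $c=\sigma J(u)$. Perturb $u$ along $\phi=uY(\theta)$, where $Y$ is (up to sign) the first non-constant eigenfunction of the Sturm-Liouville operator $-L_\theta$ with Neumann data at $\theta=0,\pi/2$ that is also even across $\theta=\pi/4$; $Y$ may be identified with the degree-$2$ zonal spherical harmonic on $S^n$ written in the variable $2\theta$, and its eigenvalue is $4(N+2)$. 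The sign of $Y$ can be chosen so that $u+t\phi\in K_+$ for small $t>0$. Because $u$ solves the PDE and $Y$ is orthogonal to constants in the invariant measure, the first variation of $J$ at $u$ vanishes, and the sign of the second variation equals that of
\[I''(u)[\phi,\phi]=\int_\Omega u^2|\nabla Y|^2\,dx-(p-2)\int_\Omega au^pY^2\,dx.\]
Factoring the angular integrals through the eigenfunction identity $\int Y_\theta^2(\sin\theta\cos\theta)^{n-1}\,d\theta=4(N+2)\int Y^2(\sin\theta\cos\theta)^{n-1}\,d\theta$, and applying the Hardy inequality $\int u^2/|x|^2\,dx\le\beta_0(\Omega)^{-1}\int|\nabla u|^2\,dx=\beta_0(\Omega)^{-1}\int au^p\,dx$ to the remaining radial integral, brings $I''(u)[\phi,\phi]<0$ down to the stated threshold $p>4(N+2)/\beta_0(\Omega)$. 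This exhibits a path in $K_+$ with strictly smaller mountain-pass maximum, contradicting $c=\sigma J(u)$, so the ground state must be nonradial.
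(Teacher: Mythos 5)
Your proposal follows the paper's overall architecture — apply the abstract $K$-ground-state theorem (Theorem \ref{var-pri}), verify the compact embedding and pointwise invariance, and rule out radiality via a second-variation estimate combined with Hardy — but it differs from the paper in two of the supporting lemmas. For the $K_+$ embedding in Part 2-a, you dominate $u(r,\theta)$ by its trace on $\{\theta=\pi/4\}$ and then invoke a Sobolev trace theorem; the paper instead compares the integral over $0<\theta<\pi/8$ to the integral over $\pi/8<\theta<\pi/4$ via monotonicity and applies the unweighted two-dimensional Sobolev embedding on that strip (where the weight $r^{2n-1}\cos^{n-1}\theta\sin^{n-1}\theta$ is comparable to Lebesgue measure), which sidesteps trace spaces entirely. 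For Part 2-b, you deform along the scale-invariant Nehari quantity $J(w)=\bigl(\int|\nabla w|^2\bigr)^{p/(p-2)}\bigl(\int aw^p\bigr)^{-2/(p-2)}$ and use that $J$ is stationary and concave at $u$ in the direction $uY$; the paper's Theorem \ref{nonradial} instead parameterizes the mountain-pass path directly as $\gamma_\sigma(\tau)=\tau(u+\sigma v)l$ and shows $h(\sigma):=E\bigl(g(\sigma)(u+\sigma v)l\bigr)-E(u)$ has $h(0)=h'(0)=0$, $h''(0)<0$ via an implicit-function argument. Both routes are legitimate and produce the same second-variation quantity.

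Two points to tighten. First, your commutator identity for $\bar v_\theta$ is correct (matching the paper's $h'(\theta)=4(n-1)/\sin^2(2\theta)$ for $m=n$), but your $\bar v$ must first be shown to exist and to be smooth enough to differentiate; the paper handles this via the truncation $u_k=\min\{u,k\}$ and a passage to the limit, which your sketch omits. Second, your Hardy estimate yields $I''(u)[\phi,\phi]<0$ precisely when $p-2>4(N+2)/\beta_0(\Omega)$, not $p>4(N+2)/\beta_0(\Omega)$; the stronger condition $p-2>4(N+2)/\beta_0$ is what Theorem \ref{nonradial} actually states and what the argument delivers, so you should carry the $-2$.
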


 We shall make use of Theorem \ref{var-pri} to prove the above result.  In that regard,  we shall need to verify two conditions in Theorem \ref{var-pri},  namely,  the compact embedding and the point wise invariance property.

 \begin{prop} \label{imbed_pi4_ann} ($ \frac{\pi}{4}$- annular domain imbeddings) Suppose  $n=m=\frac{N}{2}$.
 
 \begin{enumerate}  
 \item ($K_-$ imbedding) Suppose $\Omega$ is $\frac{\pi}{4}$-annular domain with monotonicity and 
 %\textbf{Craig:  Why do you have $a$ here?  LAso stating the embeddings in terms of the dimension $N$ makes more sense I guess}
   \[ 1 \le p <  \frac{4(N+1)}{N-2}.\] 
  Then $K_- \subset \subset L^p(\Omega)$.  %\quad  \textbf{((OLD...$\frac{2(n+1)}{n-1}$))} 

  \item ($K_+$ imbedding) Suppose $\Omega$ is an annulus in $\IR^N$ and  $ 1 \le p< \infty$.   Then $K_+ \subset \subset L^p(\Omega)$. 
\end{enumerate} 
\end{prop}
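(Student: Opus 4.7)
Both parts reduce to a weighted two-dimensional problem on $\widehat{\Omega}$ in the $(s,t)$-plane with measure $s^{m-1}t^{n-1}\,ds\,dt$; in both cases the compactness comes from an interplay between the $O(n)\times O(n)$-symmetry, the $\pi/4$-evenness, and the angular monotonicity. Throughout, I use polar coordinates $s=r\cos\theta$, $t=r\sin\theta$ so that (for $m=n$) the measure on $\widehat\Omega$ becomes $c\,r^{N-1}\sin^{n-1}(2\theta)\,dr\,d\theta$ up to a constant.

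For part $(2)$ ($K_+$ on an annulus): for $u\in K_+$ the hypothesis $u_\theta\ge 0$ on $(0,\pi/4)$ together with the $\pi/4$-evenness forces the $\theta$-maximum of $u(\cdot,r)$ to be attained at $\theta=\pi/4$ for each fixed $r$. Fix a small $\delta>0$ and decompose $\Omega$ into the neighborhood $A_\delta=\{|\theta-\pi/4|<\delta\}$ of the diagonal and its complement. Because $\Omega$ is an annulus, on $A_\delta$ both $s$ and $t$ lie in a compact sub-interval of $(0,\infty)$, so the weight $s^{m-1}t^{n-1}$ is comparable to $1$ and the restriction of $u$, viewed as a function of $(\theta,r)$ on a bounded two-dimensional rectangle, satisfies the standard unweighted $H^1\hookrightarrow\hookrightarrow L^p$ compact embedding for every $p<\infty$. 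On the complement I use the pointwise bound $u(\theta,r)\le u(\pi/4,r)$ to reduce the $L^p$ estimate to a one-dimensional estimate on the slice $\{\theta=\pi/4\}$, which is controlled by a standard $H^1$-trace argument combined with 1D Sobolev ($H^1\subset L^\infty$). Summing the two contributions and letting $\delta\to 0$ along a diagonal subsequence gives strong $L^p$-convergence for every $p<\infty$.

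For part $(1)$ ($K_-$): first I use the $\pi/4$-evenness to reduce the $L^p$ integral to $\widetilde{\Omega}_0$, on which $u_\theta\le 0$, so $u$ is largest near $\theta=0$ and, by symmetry, near $\theta=\pi/2$. The idea is to dominate
\[
\int_\Omega |u|^p\,dx \;\lesssim\; \int_{\widetilde{\Omega}_0} u(\theta,r)^p\,r^{N-1}\sin^{n-1}(2\theta)\,dr\,d\theta,
\]
by splitting the $\theta$-integration at a threshold $\theta_0$ depending on $u$. For $\theta\ge \theta_0$ the weight $\sin^{n-1}(2\theta)$ is bounded below and the bounded two-dimensional Sobolev embedding in $(\theta,r)$ gives every $p<\infty$. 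For $\theta<\theta_0$ the monotonicity $u(\theta,r)\le u(0,r)$ reduces the contribution to a weighted one-dimensional integral of the trace $u(0,\cdot)$ against $\sin^{n-1}(2\theta)$, which can be controlled by Theorem~\ref{embed-mrev} (applied to the $(m+1)$-rotation obtained by collapsing $t$), and then optimizing in $\theta_0$ yields the claimed exponent $p<\tfrac{4(N+1)}{N-2}$. Compactness follows by the usual Rellich-style combination of weak convergence in $H^1_{0,G}$ with an a priori $L^{p+\varepsilon}$ bound on the sequence.

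The main technical obstacle lies in part $(1)$: a direct application of Theorem~A with $m=n$ yields only $p<\tfrac{2(N+2)}{N-2}$, and the extra improvement to $\tfrac{4(N+1)}{N-2}$ genuinely relies on the $\pi/4$-symmetry together with the vanishing of the weight $\sin^{n-1}(2\theta)$ at the axes. Pinning down the correct interpolation between the 2D interior Sobolev bound and the axis trace bound, and verifying its sharpness via a rescaling argument at a pair of symmetric concentration points $(\theta,r)=(0,r_0)$ and $(\pi/2,r_0)$, is the delicate step I expect to require the most care.
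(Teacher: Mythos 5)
Your part 2 has essentially the right idea but a faulty justification. The paper proves the $K_+$ embedding by a direct comparison: for $u\in K_+$ and $0<\theta<\pi/8$ one has $u(r,\theta)\le u(r,\theta+\pi/8)$, so after the change of variables $\hat\theta=\theta+\pi/8$ the integral over $(0,\pi/8)$ is dominated by the integral over $(\pi/8,\pi/4)$; on that strip the measure is comparable to $d\theta\,dr$ and the two-dimensional Sobolev embedding, valid for every finite $p$, finishes. You instead bound $u(r,\theta)\le u(r,\pi/4)$ and invoke "a standard $H^1$-trace argument combined with 1D Sobolev ($H^1\subset L^\infty$)." That does not hold as stated: the restriction of an $H^1$ function on a two-dimensional domain to a one-dimensional curve lies only in $H^{1/2}$, not $H^1$, and $H^{1/2}$ of an interval does not embed into $L^\infty$. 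The argument could be repaired, since $H^{1/2}\subset L^q$ for every finite $q$, but it is cleaner, and is what the paper does, to avoid the trace entirely via the $\pi/8$ shift. Your $\delta\to 0$ diagonal extraction is also unnecessary; a fixed $\delta=\pi/8$ suffices.

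Your part 1 is more seriously off. The paper's proof is one line: the Theorem~A argument carries over. Running Theorem~A with $m=n=N/2$ gives $p<\frac{2(n+1)}{n-1}=\frac{2(N+2)}{N-2}=\frac{2N+4}{N-2}$, and that is exactly the exponent that appears in Theorem~\ref{theorem-nonlinear-annular} part~1 and that drives the regularity iteration there (where $q=\frac{2(n+1)}{n-1}$). You take the numerator $\frac{4(N+1)}{N-2}$ in the statement at face value, correctly note that Theorem~A alone gives only $\frac{2(N+2)}{N-2}$, and then try to manufacture the difference out of the $\pi/4$-evenness. No such gain is available: the $\pi/4$-symmetry does not alter the local concentration geometry near $\theta=0$, where the problem is effectively $(n+1)$-dimensional, and that is what fixes the exponent. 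The discrepancy is a misprint in the proposition's statement (it should read $\frac{2N+4}{N-2}$, matching the theorem), not a genuine improvement for you to prove. Your proposed mechanism also does not go through: a codimension-$n$ trace to $\{t=0\}$ of an $H^1(\Omega)$ function is not controlled for $n\ge 2$ unless the vanishing weight $t^{n-1}$ is used in an essential way, the appeal to Theorem~\ref{embed-mrev} via "collapsing $t$" is not a well-defined reduction, and the "optimization in $\theta_0$" is not carried out. Simply adapt the Theorem~A proof, reflecting at $\theta=\pi/4$ rather than $\theta=\pi/2$, and you obtain the correct range directly.
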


\begin{proof} 
Part 1:    The proof used in the proof of Theorem A carries over to this case. \\
Part 2: If we take $u \in K_+$ note that the function is largest at $ \theta=\frac{\pi}{4}$.  So note the problems appears to be a genuine two dimensional problem near $ \theta=\frac{\pi}{4}$ and hence we expect to have imbeddings for all $ p$, see Remark \ref{remark_imbed_mon} for related comments.   For concreteness we work on the annulus centered at the origin with inner radius $1$ and outer radius $ 2$. 

Then note for $ 0 \le  u \in H^1_{0,G}(\Omega)$ (which are also even about $ \theta=\frac{\pi}{4}$ but may not have any monotonicity) we have 
\[ \int_\Omega u(x)^p dx= \int_1^2 \int_0^\frac{\pi}{4} u(r,\theta)^p r^{2n-1} \cos^{n-1}(\theta) \sin^{n-1}(\theta) d \theta dr,\] and 
\[ \int_\Omega | \nabla u(x)|^2 dx = \int_1^2 \int_{0}^\frac{\pi}{4}  \left\{ u_r^2 + \frac{u_\theta^2}{r^2} \right\} r^{2n-1} \cos^{n-1}(\theta) \sin^{n-1}(\theta) d \theta d r.\]  For any $ 1 \le p<\infty$ there is some $C_p>0$ (independent of $u$ as above) such that 

\begin{equation} \label{first_est_1}
\left\{\int_1^2 \int_\frac{\pi}{8}^\frac{\pi}{4} u(r,\theta)^p r^{2n-1} \cos^{n-1}(\theta) \sin^{n-1}(\theta) d \theta dr \right\}^\frac{2}{p},
\end{equation} is bounded above by 
\[C_p\int_1^2 \int_{\frac{\pi}{8}}^\frac{\pi}{4}  \left\{ u_r^2 + \frac{u_\theta^2}{r^2} \right\} r^{2n-1} \cos^{n-1}(\theta) \sin^{n-1}(\theta) d \theta d r.\]  The two important points are that the integrals are over $ 1 <r<2$ and $ \frac{\pi}{8}<\theta< \frac{\pi}{4}$.    Note on this range of $ \theta$ and $ r$  the measure $ d \mu(r,\theta)=  r^{2n-1} \cos^{n-1}(\theta) \sin^{n-1}(\theta) d \theta d r$ is essentially two dimensional, ie. comparable to $ d \theta dr$.  This allows one to use the two dimensional Sobolev imbedding.  To see this more rigously one can consider working on $(r,\theta) \in (1,2) \times (\frac{\pi}{8}, \frac{\pi}{4})$ and hence we can consider the Sobolev imbeddings in the product space.  Let $ u \in K_+$ and then note that 
\begin{eqnarray*}
\int_1^2 \int_0^\frac{\pi}{8} u(r,\theta)^p r^{2n-1} \cos^{n-1}(\theta) \sin^{n-1}(\theta) d \theta d r & \le & \int_1^2 \int_0^\frac{\pi}{8} u(r,\theta+ \frac{\pi}{8})^p r^{2n-1} \cos^{n-1}(\theta) \sin^{n-1}(\theta) d \theta d r \\ 
& \le &  \int_1^2 \int_\frac{\pi}{8}^\frac{\pi}{4} u(r,\hat{\theta})^p r^{2n-1}  d \hat{\theta} d r
\end{eqnarray*} where in the first line we used the monotonicity of $u$.  Note this final quantity is bounded above by the $\frac{p}{2}$ power of (\ref{first_est_1}). We can now combine the results which completes the proof of part 2.   
\end{proof}

%\bibitem{orgin_an} C. Cowan and A. Moameni, \emph{Supercritical elliptic problems on nonradial domains via a nonsmooth variational approach}, (2021) preprint. 

     The following theorem develops pointwise invariance property (see Theorem \ref{var-pri} part (ii)) which is related to  the linear problem
     \begin{equation} \label{linear_new_cut-off}
\left\{\begin{array}{ll}
-\Delta v = a(x) u^{p-1} &  \mbox{ in } \Omega, \\
v= 0 &   \mbox{ on } \pOm.
\end {array}\right.
\end{equation}   
    
 \begin{prop} \label{pointwise_inv} (Pointwise invariance property; case $m=n=\frac{N}{2}$). Suppose $ a$ is nonnegative with $ a=a(s,t)$, $ a_\theta = sa_t-t a_s$ is bounded and $a$ is even across $ \theta= \frac{\pi}{4}$.   
 \begin{enumerate}
 
     \item Suppose $\Omega$ is $\frac{\pi}{4}$-annular domain with monotonicity and $ a_\theta \le 0$ in $ \widetilde{\Omega}_0$.    If  $u \in K_-$ and $v$ satisfies (\ref{linear_new_cut-off}) then $ v \in K_-$. 
     
     \item Suppose $\Omega$ is an annulus and $ a_\theta \ge 0 $ in $ \widetilde{\Omega}_0$.  If $ u \in K_+$ and $v$ satisfies (\ref{linear_new_cut-off}) then $ v \in K_+$. 
 \end{enumerate}
 \end{prop}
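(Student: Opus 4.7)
The plan is to verify in turn the four defining properties of $K_{\pm}$ for $v$: $G$-invariance, non-negativity, evenness across $\theta=\pi/4$, and the correct sign of $v_\theta$ on $\widetilde{\Omega}_0$. The first three are immediate structural consequences of the linear equation; only the last demands real work.

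For $G$-invariance, note that $a(x)u^{p-1}(x)$ is $G$-invariant (since both $a$ and $u$ are), and the Dirichlet problem \eqref{linear_new_cut-off} on the $G$-invariant domain $\Omega$ has a unique solution, so $v=v(s,t)$. Non-negativity follows from the weak maximum principle, since the right-hand side is non-negative and $v=0$ on $\partial\Omega$. For evenness across $\theta=\pi/4$, the hypothesis $m=n$ makes the reduced operator $-\partial_s^2-\partial_t^2-\frac{n-1}{s}\partial_s-\frac{n-1}{t}\partial_t$ invariant under the swap $\sigma:(s,t)\leftrightarrow(t,s)$. By assumption $a$ is $\sigma$-invariant, $u\in K_\pm$ is $\sigma$-invariant, and $\widehat{\Omega}$ is preserved by $\sigma$ (Definition~\ref{pi4}). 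Thus $v\circ\sigma$ solves the same boundary value problem as $v$, and uniqueness forces $v(s,t)=v(t,s)$.

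It remains to obtain the required sign of $\psi := v_\theta = sv_t-tv_s$ on $\widetilde{\Omega}_0$. Using $\partial_\theta = s\partial_t - t\partial_s$ together with $\partial_\theta s = -t$ and $\partial_\theta t = s$, a direct computation gives the commutation identities
\begin{equation*}
\partial_\theta(v_{ss}+v_{tt}) = \psi_{ss}+\psi_{tt}, \qquad \partial_\theta\!\left(\tfrac{v_s}{s}+\tfrac{v_t}{t}\right) = \tfrac{\psi_s}{s}+\tfrac{\psi_t}{t} - \tfrac{s^2+t^2}{s^2 t^2}\,\psi.
\end{equation*}
Differentiating the reduced PDE in $\theta$ therefore produces, on $\Omega\cap\{s,t>0\}$, the linear equation
\begin{equation*}
-\Delta\psi + (n-1)\tfrac{s^2+t^2}{s^2 t^2}\,\psi \;=\; a_\theta u^{p-1} + (p-1)a u^{p-2} u_\theta,
\end{equation*}
with non-negative zeroth-order coefficient.

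Now set $\Omega_0 := \{x\in\Omega : 0<t(x)<s(x)\}$, the Cartesian lift of $\widetilde{\Omega}_0\cap\{t>0\}$. The boundary of $\Omega_0$ splits into three pieces on each of which $\psi$ has the needed sign: on the physical boundary $r=g_i(\theta)$, differentiating $v(g_i(\theta),\theta)=0$ gives $\psi=-g_i'(\theta)v_r$, so the monotonicity of $g_1,g_2$ combined with the sign of $v_r$ (from Hopf applied to $v\geq 0$) yields $\psi\leq 0$ in the $K_-$ case, while $g_i'\equiv 0$ for the annulus yields $\psi=0$ in the $K_+$ case; on $\{s=t\}$, $\psi=0$ by the evenness just proved; and on $\{t=0\}$, $\psi = sv_t - tv_s = 0$ since $G$-invariance forces $v_t(s,0)=0$. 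In the $K_-$ case the right-hand side of the $\psi$-equation is $\leq 0$ on $\widetilde{\Omega}_0$, and the pointwise weak maximum principle (if the maximum were interior and positive, one would have $\Delta\psi\leq 0$ and $c\psi>0$ there, contradicting the equation) gives $\psi\leq 0$ on $\Omega_0$. The $K_+$ case is identical with all inequalities reversed. The main technical point is that the coefficient $c(x)=(n-1)(s^2+t^2)/(s^2t^2)$ blows up on the axes, but this is harmless: the argument is entirely pointwise on the open set $\Omega_0$ where $c$ is finite and non-negative, and the vanishing of $\psi$ on the axis part of $\partial\Omega_0$ supplies the needed boundary datum.
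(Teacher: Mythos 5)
Your structural and algebraic observations are correct and match the paper's (the $(s,t)\!\leftrightarrow\!(t,s)$ symmetry when $m=n$, the commutation computation producing the singular zeroth-order term, the boundary decomposition of $\Omega_0$ into the curved pieces, the diagonal $\{s=t\}$, and the axis). But there is a genuine gap: you argue entirely at the classical level, invoking Hopf's lemma, interior maxima, and pointwise evaluation of $v_t(s,0)$ on the boundary, all of which require $v \in C^2(\Omega) \cap C^1(\overline{\Omega})$. For $u$ merely in $K_- \subset H^1_{0,G}(\Omega)$ and $p$ supercritical, the right-hand side $a\,u^{p-1}$ is only in $L^{p'}(\Omega)$ with $p'<2$; the solution $v$ is then just in $W^{2,p'}(\Omega)$, which does not give $C^1(\overline{\Omega})$, and indeed the differentiated equation for $\psi=v_\theta$ is only formal. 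The statement also requires $\psi \le 0$ a.e., not a classical sign, which your argument does not produce for non-smooth $v$.

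The paper's proof closes this gap with a truncation and limit argument that you have omitted. One first replaces $u$ by $u_k:=\min\{u,k\}$ (which stays in $K_\pm$), so $a\,u_k^{p-1}$ is bounded and elliptic regularity gives $v^k \in C^{1,\alpha}(\overline{\Omega})$; the symmetry and monotonicity argument is then run on the smooth $v^k$. Even there, the paper avoids a pointwise maximum argument at the singular set and at the curved corners: it tests the (weak) equation for $w=v^k_\theta$ against $(w-\varepsilon)_+$ (resp. $(w+\varepsilon)_-$), using that $w$ vanishes identically near $\theta=0,\pi/4$ by the $C^{1,\alpha}$ regularity, to conclude $w\le 0$ (resp. $\ge 0$). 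Finally, one passes $k\to\infty$: the imbedding of $K_\pm$ into $L^p$ yields a uniform $H^1$ bound on $v^k$, hence a weak limit $v$, and the sign of $v_\theta$ is recovered a.e. in $\widetilde{\Omega}_0$ by testing against non-negative $\psi\in C_c^\infty(\widetilde{\Omega}_0)$ and using $L^2_{\mathrm{loc}}$ convergence. Without this approximation-and-limit scaffolding, the proposed proof does not apply to the class of $u$ in the hypothesis.

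A smaller point: the Hopf lemma invocation to control $v_r$ on the curved boundary is unnecessary. Since $v\ge 0$ in $\Omega$ and $v=0$ on $\partial\Omega$, the inward normal derivative is automatically non-negative; combined with the sign and monotonicity of $g_i'$ this gives the desired inequality for $\psi=-g_i'\,v_r$ on the curved parts without appealing to strict positivity of the normal derivative.
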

 
 \begin{proof}  Much of the proof won't depend on which case we are in.  Additionally we have $m=n$ but for the time being we won't indicate this since many of these computations will be useful in later cases where they are not equal. Let $ u \in K_\pm$ and for $k$ large consider $u_k(x)=\min \{ u(x),k\}$ and note  that $u_k \in K_\pm$.    Let $ v^k$ denote a solution of 
\begin{equation} \label{linear_new_cut-off_proof}
\left\{\begin{array}{ll}
-\Delta v = a(x) u_k^{p-1} &  \mbox{ in } \Omega, \\
v= 0 &   \mbox{ on } \pOm.
\end {array}\right.
\end{equation} 

By elliptic regularity we have $ v^k \in H^1_{0,G}(\Omega) \cap C^{1,\alpha}(\overline{\Omega})$ for any $ 0<\alpha<1$.  In terms of $(s,t)$ we see that $v^k$ satisfies 
\begin{equation} \label{s_t_equa}
-v_{ss}^k - v_{tt}^k - \frac{(m-1) v_s^k}{s}- \frac{(n-1) v_t^k}{t} = a u_k^{p-1} \quad \mbox{ in } \widehat{\Omega},
\end{equation}
with $v^k=0$ on $ (s,t) \in \partial \widehat{\Omega} \backslash ( \{s=0\} \cup \{t=0\} )$.  Since $ v^k$ is sufficiently smooth then 
 $ v^k_s =0 $ on $\partial \widehat{\Omega} \cap \{s=0\}$ and $v^k_t=0$ on $ \partial \widehat{\Omega} \cap \{t=0\}$ after considering the symmetry properties of $v^k$ (see \cite{orgin_an} for details).   We now want to show that $v^k$ has the added symmetry across the line $ t=s$.  Here there are a few ways to argue.  We can directly use the $(s,t)$ coordinates or we can switch to polar coordinates, we will use the second approach.  \\

 A computation shows that 
\[ \frac{(m-1) v_s^k}{s}+ \frac{(n-1) v_t^k}{t} = \frac{(N-2) v_r^k}{r} + \frac{ v_\theta^k}{r^2} \left\{ \frac{n-1}{\tan(\theta) }- (m-1) \tan(\theta) \right\}\] if we write the equation in terms of polar coordinates (recall we have $ s=r\cos(\theta), t = r \sin(\theta)$).   \\ 
Writing out (\ref{s_t_equa}) in polar coordinates gives 
\begin{equation} \label{polar_k}
    -v_{rr}^k -\frac{(N-1) v_r^k}{r}- \frac{v_{\theta \theta}^k}{r^2} + \frac{v_\theta^k}{r^2} h(\theta) = a u_k^{p-1}, \; \mbox{ in } \widetilde{\Omega},
    \end{equation}
    with $ v^k=0$ on $ \partial \widetilde{\Omega} \backslash \left( \Gamma_L \cup \Gamma_R \right) $ where $\Gamma_L$ (respectively $\Gamma_R$) corresponds to the portion of $\partial \widetilde{\Omega}_0$ given by $ \{\theta=0\}$ (respectively $ \{ \theta=\frac{\pi}{2}\}$) and where $v^k_\theta=0$ on $ \Gamma_L \cup \Gamma_R$ and where
    \begin{equation}  \label{defn_h}
    h(\theta)= (m-1) \tan(\theta) - \frac{(n-1)}{\tan(\theta)}.
    \end{equation} 
  We now show that $v^k$ is even across $ \theta=\frac{\pi}{4}$; so we set $\widehat{v}(r,\theta)= v^k(r, \frac{\pi}{2}- \theta)$ and we want to show that $ \widehat{v}=v^k$ in $ \widetilde{\Omega}$.    Because of the smoothness of $v^k$ we have  $ \partial_\theta v^k=0$ at $ \theta=0,\frac{\pi}{2}$ and hence we have the same for $ \widehat{v}$.  Also note that since $m=n$ we have $h$ is odd across $ \theta= \frac{\pi}{4}$, ie. 
\[ h(\theta)= -h( \frac{\pi}{2}-\theta) \] for $ 0<\theta<\frac{\pi}{2}$.  Note the right hand side of (\ref{polar_k}) is even across $ \theta=\frac{\pi}{4}$.   From this we see $\widehat{v}$ satisfies (\ref{polar_k}) with the same boundary conditions and hence by uniqueness of solution we have $ \widehat{v}=v^k$ in $ \widetilde{\Omega}$.   Now since $v^k$ is even across $ \theta=\frac{\pi}{4}$ and $v^k$ is sufficiently smooth we have $v^k_\theta=0$ on $ \theta=\frac{\pi}{4}$.  \\

\noindent
\textbf{Monotonicity.}  Let $ w=v_\theta^k$ and then note that if we take a derivative in $\theta$ of the equation for $ v^k$ we arrive at 
\begin{equation} \label{eq_w}
-w_{rr} -\frac{(N-1) w_r}{r}- \frac{w_{\theta \theta}}{r^2} + \frac{w_\theta}{r^2} h(\theta) + \frac{w}{r^2} h'(\theta)= \partial_\theta \left\{a u_k^{p-1} \right\}, \mbox{ in } \widetilde{\Omega},
\end{equation} and in particular the equation is satisfied in $ \widetilde{\Omega}_0$ with $ w=0$ on the portion of $ \partial \widetilde{\Omega}_0$ corresponding to $ \theta=0, \frac{\pi}{4}$.   A computation shows that if write the left hand side of  (\ref{eq_w}) in terms of $x$ we arrive at 
\[ -\Delta w(x) + \frac{(n-1) |x|^2 w(x)}{ (x_1^2 + \cdot \cdot \cdot + x_m^2)( x_{m+1}^2 + \cdot \cdot \cdot+x_N^2)},\] which, at least formally, satisfies a maximum principle.  \\ 

We now separate the cases of $u \in K_-$ and $u \in K_+$.  Suppose $ u \in K_+$ and $ \Omega$ an annulus.  Then $ w=0$ on the curved portions of $ \widetilde{\Omega}_0$ since $ v^k=0$ on these portions of the boundary.    Also note that the right hand side of (\ref{eq_w}) is nonnegative and assuming we can apply the maximum principle we arrive at $ w \ge 0$ in $ \widetilde{\Omega}_0$.  \\ 

We now suppose $u \in K_-$.  Then we have the right hand side of (\ref{eq_w}) is nonpositive.  Since $ v^k \ge 0$ in $ \widetilde{\Omega}_0$ and noting the monotnicity of $g_1$ and $g_2$ we see that $ w=v^k_\theta \le 0$ on the curved portions of $ \partial \widetilde{\Omega}_0$ and again if we can apply the maximum principle we arrive at $ w \le 0$ in $ \widetilde{\Omega}_0$.  \\

%\bibitem{gelf_a} A. Aghajani, C. Cowan and A. Moameni, \emph{The Gelfand problem on annular domains of double revolution with monotonicity}, (2021) preprint.

%\bibitem{orgin_an} C. Cowan and A. Moameni, \emph{Supercritical elliptic problems on nonradial domains via a nonsmooth variational approach}, (2021) preprint. 

%\bibitem{Weth_annulus}  A. Boscaggin, F. Colasuonno, B. Noris and Tobias Weth, \emph{A supercritical elliptic equation in the annulus}, arXiv:2102.07141, (2021).  

To make these maximum principle arguments used above rigorous we use the idea of \cite{Weth_annulus}  (see also \cite{orgin_an}).  Consider the case of $ u \in K_-$.   Let $ \E>0$ be small and consider $ \psi:=(w-\E)_+$.  By the smoothness properties of $v^k$ and noting the boundary values of $v^k$ we have $ w=0$ near $ \theta=0$ and $ \theta=\frac{\pi}{4}$.   Using $ \psi$ as a test function on a suitable weak notion of a solution of (\ref{eq_w}) one will arrive at $ \psi=0$ and sine $ \E>0$ is arbitrary we have $ w \le 0$.\\

\noindent
\textbf{Sending $k \rightarrow \infty$.}  We now get bounds on $v^k$ which allow us to pass to the limit in $k$.  We assume that $ u \in K_-$ and $v^k$ as above.  Then testing the weak formulation for $v^k$ on $v^k$ gives 
\begin{eqnarray*}
\int_\Omega | \nabla v^k |^2 dx &=& \int_\Omega a u_k^{p-1} v^k dx \\
& \le & C  \| u_k^{p-1} \|_{L^{p'}} \| v^k \|_{L^p}  \\
 & \le & C_0 \|u_k \|_{L^{p'(p-1)}}^{p-1} \| \nabla v^k \|_{L^2} \\ 
 & =& C_0 \| u_k\|_{L^p}^{p-1}  \| \nabla v^k \|_{L^2}
 \end{eqnarray*} where the second last inequality follows by part 1 of by Proposition \ref{imbed_pi4_ann} after noting the restriction on $p$ and the final equality follows since $p'(p-1)=p$.  Using the imbedding again we arrive at 
 \[ \| \nabla v^k \|_{L^2} \le C_0 \| u_k \|_{L^p}^{p-1} \le C_1 \| \nabla u_k \|_{L^2}^{p-1},\] since $u_k \in K_-$, and now note  this quantity on the right is bounded independently of $k$ and hence $v^k$ is bounded in $H_{0,G}^1(\Omega)$ and after passing to a subsequence we can assume that there is some $ v \in H^1_{0,G}(\Omega)$ such that $ v^k \rightharpoonup v$ in $H^1_{0,G}(\Omega)$ and its clear that $v$ is an $H^1_{0,G}(\Omega)$ solution of (\ref{linear_new_cut-off}).  Also note that $ u_k^{p-1} \rightarrow u^{p-1} $ in $L^{p'}(\Omega)$ and hence by passing to another subsequence we have $ v^k \rightarrow v$ in $ W^{2,p'}(\Omega)$ and hence we can assume $ \nabla v^k \rightarrow \nabla v$ in $L^{p'}(\Omega)$ and a.e. in $ \Omega$.     We now suppose that $ 0 \le \psi \in C_c^\infty( \widetilde{\Omega}_0)$ and note that we have 
 \[ 0 \ge \int_{\widetilde{\Omega}_0} v_\theta^k \psi dr d \theta = - \int_{\widetilde{\Omega}_0} v^k \psi_\theta dr d \theta,\]   and noting that $ v^k \rightarrow v$ in $L^2_{loc}( \widetilde{\Omega}_0, dr d \theta)$ (recall we are away from the origin in this problem and the measures only have issues on $ \theta=0, \frac{\pi}{2})$) and hence we can pass to the limit here to see that 
 $0 \ge \int_{\widetilde{\Omega}_0} v \psi_\theta dr d \theta$  but this is sufficient to see that $ v_\theta \ge 0$ a.e. in $ \widetilde{\Omega}_0$. %\textbf{((ABBAS:  ARE YOU HAPPY WITH ABOVE CRAPPY ARGUMENT.... I AM SURE WE CAN GIVE BETTER ARGUMENT... ))}. 

 The case of $ u \in K_+$ has a similar proof and we skip the details.  
\end{proof}

   \noindent
   \textbf{Proof of Theorem \ref{theorem-nonlinear-annular}.}    We are going to use Theorem  \ref{var-pri} for the proof. 
   Note that conditions $(i)$ and $(ii)$ in Theorem \ref{var-pri} follows from Propositions \ref{imbed_pi4_ann} and \ref{pointwise_inv} respectively. This proves the existence of a weak solution of (\ref{eq_fir}) for both cases $(\it 1)$ and ($\it 2$-$a$).   It also follows from Theorem \ref{nonradial}
   that for 
   \[ \frac{4(N+2)}{\beta_0(\Omega)} < p < \infty,\]
   the ground state solution $ u $ in $\it{2}$-$\it{a}$ is non-radial. \\

    \noindent
{\it Regularity of the solution.}  We will prove the case of part 1, the case of part 2 is easier since one doesn't need an iteration.   Let $ q:=\frac{2(n+1)}{n-1}$ and take $t_0=1$ and 
\[ t_{k+1}:= \frac{q t_k}{2} - \frac{p-2}{2},\] where $ 1<p<q$.  Then by examining the cobweb we see that $t_k \rightarrow \infty$.  \\ 

We now prove the following inductive step.  If $k \ge 0$ and $ u^{t_k} \in K_-$ then $ u^{t_{k+1}} \in K_-$.   Assuming this is true for a moment then note we see that since $ u^{t_0}=u \in K_-$ we can iterate to see $u \in L^T(\Omega)$ for all $T<\infty$ and hence we see that $u$ is $C^{1,\delta}( \overline{\Omega})$ and then we can proceed with the Schauder regularity theory and the exact smoothness of $u$ will depend on the smoothness of $a$.  Assuming $a$ at least H\"older continuous we have $u$ is a classical solution.  \\ 

We now prove the iteration step.  Suppose $u^{t_k} \in K_-$ for some $k \ge 0$ and for $i$ a large integer   define 
\begin{equation} 
\phi(x)=\left\{\begin{array}{ll}
u(x)^{2t_k-1}  &  \mbox{ if } u(x)<i, \\
i^{2t_k-1}     &   \mbox{ if }u(x) \ge i. \\
\end {array}\right.
\end{equation} Note that $ \phi \in K_-$.  We can test (\ref{eq_fir}) on $ \phi$ to arrive at (here $\Omega_i:=\{x \in \Omega: u(x)<i \}$) 
\begin{eqnarray*}
\frac{(2t_{k+1}-1)}{t_{k+1}^2} \int_{\Omega_i} | \nabla u^{t_{k+1}}|^2 dx &=& \int_\Omega a u^{p-1} \phi dx \\
&=& \int_{\Omega_i} a u^{p+2 t_{k+1}-2} dx + \E_{k,i} \\
&=& \int_{\Omega_i} a \left( u^{t_k} \right)^\frac{p+2t_{k+1}-2}{t_k} dx + \E_{k,i} \\
&=& \int_{\Omega_i} a \left( u^{t_k} \right)^qdx + \E_{k,i} \\
& \le & \int_{\Omega} a \left( u^{t_k} \right)^q dx + \E_{k,i},
\end{eqnarray*} where 
\[ \E_{k,i}:= i^{2t_{k+1}-1} \int_{\Omega \backslash \Omega_i} a u^{p-1}dx = i^{qt_k -(p-1)}\int_{\Omega \backslash \Omega_i} a u^{p-1}dx.\]  First note since $ u^{t_k} \in K_-$ then we see the $ u^{t_k} \subset L^q(\Omega)$ by the imbedding and hence the integral on the right is finite. Set $C_k=\int_\Omega u^{t_k q} dx$ and note we have 
\[ i^{t_k q} | \Omega \backslash \Omega_i| \le C_k,\] for all large $i$.   Put $ \delta_{k,i}:=\int_{\Omega \backslash \Omega_i} u^{t_k q} dx$ and note $ \delta_{k,i} \rightarrow 0$ as $ i \rightarrow \infty$.  Let $(p-1) \tau = t_k q$ and then note 
\begin{eqnarray*}
\frac{\E_{k,i}}{i^{q t_k - (p-1)}} &= &\int_{\Omega \backslash \Omega_i} a u^{p-1} dx \\ 
& \le & C_a \left( \int_{\Omega \backslash \Omega_i} u^{t_k q} dx \right)^\frac{1}{\tau} | \Omega \backslash \Omega_i|^\frac{1}{\tau'} \\
\end{eqnarray*} so we have 
\begin{eqnarray*}
\frac{\E_{k,i}^{\tau'}}{i^{(q t_k - (p-1))\tau'}} & \le  & C_a^{\tau'} \delta_{k,i}^\frac{\tau'}{\tau} \frac{C_k}{i^{t_k q}}
\end{eqnarray*} which gives us 
\[ \E_{k,i}^{\tau'} \le C_a^{\tau'} \delta_{k,i}^\frac{\tau'}{\tau} C_k \rightarrow 0,\] as $ i \rightarrow \infty$. From this we see that 
\[ \frac{(2 t_{k+1}-1)}{t_{k+1}^2} \int_\Omega | \nabla u^{t_{k+1}}|^2 dx \le \int_\Omega a (u^{t_k})^q dx < \infty, \] and hence we see that $u^{t_k+1} \in H^1_{0,G}(\Omega)$ and its clear the monotonicity and symmetry is sufficient that $ u^{t_{k+1}} \in K_-$.  \hfill $\Box$

 \section{H\'enon equation on $B_1$ in even dimensions} \label{henon_ball}
 
 %\textbf{((WE SHOULD ADD SOME REFERENCES FOR NONRADIAL SOLTUIONS OF HENON...))} \\
 
 In this section we examine the H\'enon equation given by 
 
 \begin{equation} \label{eq_hen}
\left\{\begin{array}{ll}
-\Delta u = |x|^\alpha u^{p-1} &  \mbox{ in } B_1, \\
u= 0 &   \mbox{ on } \partial B_1,
\end {array}\right.
\end{equation}
 where $B_1$ is the unit ball in $ \IR^N$ centered at the origin and $N \ge 3$ and $ \alpha>0$.  Our interest is in obtaining positive classical nonradial solutions in the supercritical case  
 \[ \frac{2N}{N-2}<p< \frac{2N+2 \alpha}{N-2},\] via our variational approach. In the radial case the weight improves compactness of the Sobolev imbedding to $H^1_{0,rad}(B_1) \subset \subset L^p(B_1, |x|^\alpha dx)$ to $1 \le p<\frac{2N+2\alpha}{N-2}$, see \cite{Ni} and this allows one to obtain a positive radial solution for this range of $p$.   The first work to obtain a nonradial solution was in \cite{h1} in the subcritical case.  This was later extended to other values of $p$ in \cite{h2,h3,h4,h5,h6}.  Many of these works used bifurcation approaches to show the existence of nonradial solutions. \\

 We now define $K_+$ in essentially the same way we did on the annulus; 
 \[ K_+=\left\{ 0 \le u \in H^1_{0,G}(B_1): u \mbox{ is even in $\theta$ across $ \theta= \frac{\pi}{4}$ with $ u_\theta \ge 0$ for $ 0<r<1$ and $ 0<\theta<\frac{\pi}{4}$ } \right\},\] and we define $ \widetilde{\Omega}$ and $ \widetilde{\Omega}_0$ in the obvious way after considering the definitions in Section \ref{double_dom}.   We will not consider working on $K_-$ here even though it would give a different type of solution as compared to $K_+$,  but one would need to further restrict the upper bound on $p$ and so we chose not to include this. 

Here is our main theorem in this section. 
 \begin{thm} \label{non-radial-henon} ($K_+$ solutions for H\'enon equation).  Let  $B_1$ is the unit ball in $ \IR^N$ centered at the origin, $N \ge 4$ is even and $ \alpha>0$. The following assertions hold:
 
 \begin{enumerate}   \item Suppose $ 2<p< \frac{2N+2\alpha}{N-2}$.  Then there is a positive classical $K_+$ ground state solution $u $ of (\ref{eq_hen}).    
 
 \item Suppose \[\frac{16(N+2)}{(N-2)^2}+2 < p< \frac{2N+2\alpha}{N-2}. \]  Then the positive classical $K_+$ ground state solution $u$ of (\ref{eq_hen}) is nonradial.

 \end{enumerate} 
 \end{thm}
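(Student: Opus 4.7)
The plan is to apply the abstract framework of Theorem \ref{var-pri} to the Hénon equation (\ref{eq_hen}), taking $\lambda=0$, $a(x)=|x|^{\alpha}$, and the convex cone $K=K_{+}$. Two hypotheses must be verified: the compact embedding $K_{+}\subset\subset L^{p}(B_{1},|x|^{\alpha}dx)$ for $2<p<\frac{2N+2\alpha}{N-2}$, and the pointwise invariance property. Once these are in place, Theorem \ref{var-pri} produces a nonzero $K_{+}$ critical point $\tilde u$ of the functional $I$, and a bootstrap iteration analogous to the one carried out at the end of the proof of Theorem \ref{theorem-nonlinear-annular} upgrades $\tilde u$ to a classical positive solution.

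For the compactness, I would follow the scheme of Proposition \ref{imbed_pi4_ann}(2) adapted to the ball. Writing a $K_{+}$ function in the polar variables $(r,\theta)$ of the $(s,t)$-plane (with $m=n=N/2$ since $N$ is even), the symmetry-plus-monotonicity across $\theta=\pi/4$ lets one bound the mass over $(0,\pi/8)$ by the mass over $(\pi/8,\pi/4)$ via the shift $\theta\mapsto\theta+\pi/8$, precisely as in (\ref{first_est_1}). On the angular strip $\theta\in(\pi/8,\pi/4)$ the angular weight $(\cos\theta)^{m-1}(\sin\theta)^{n-1}$ is comparable to a positive constant, so the problem reduces to a two-variable Hénon-type weighted embedding with radial weight $r^{N+\alpha-1}$. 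Unlike the annular case there is a genuine obstacle at the origin, and this is where the weight $|x|^{\alpha}$ does the work: the usual scaling argument shows that concentration at $0$ can be ruled out precisely up to the Hénon threshold $\frac{2N+2\alpha}{N-2}$, so that a Rellich-type argument yields strong convergence in $L^{p}(B_{1},|x|^{\alpha}dx)$ for $p$ strictly below this value. This is the main obstacle in the whole proof.

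For the pointwise invariance property I would replicate the argument of Proposition \ref{pointwise_inv}(2). Given $u\in K_{+}$, let $u_{k}=\min\{u,k\}\in K_{+}$ and let $v^{k}$ solve $-\Delta v^{k}=|x|^{\alpha}u_{k}^{p-1}$ with zero Dirichlet data; by standard elliptic regularity $v^{k}\in H_{0,G}^{1}(B_{1})\cap C^{1,\delta}(\overline{B_{1}})$. Writing the equation in the polar form (\ref{polar_k}), the evenness of $v^{k}$ across $\theta=\pi/4$ follows from uniqueness of solutions, from the fact that $m=n$ makes the coefficient $h(\theta)$ in (\ref{defn_h}) odd across $\pi/4$, and from the key observation that the source term $|x|^{\alpha}u_{k}^{p-1}$ is invariant under $\theta\mapsto\pi/2-\theta$ because $|x|^{\alpha}$ depends only on $r$ while $u_{k}$ is even across $\pi/4$. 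Differentiating (\ref{polar_k}) in $\theta$ shows that $w=v_{\theta}^{k}$ satisfies the linear problem (\ref{eq_w}) on $\widetilde{\Omega}_{0}$ with right-hand side equal to $\partial_{\theta}(|x|^{\alpha}u_{k}^{p-1})\ge 0$ (the weight is radial in $r$ only, so its $\theta$-derivative vanishes and the $u_{k}^{p-1}$ factor is $\theta$-monotone). The Weth-type maximum-principle argument recalled in Proposition \ref{pointwise_inv} then gives $w\ge 0$, so $v^{k}\in K_{+}$. The uniform $H^{1}$ bound that follows from testing with $v^{k}$ itself and invoking the compact embedding established above allows one to pass to the limit $k\to\infty$ and obtain $v\in K_{+}$ as required.

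For part (2), the nonradiality of the $K_{+}$ ground state, I would invoke the general criterion Theorem \ref{nonradial} from Section \ref{non-rad-sect}. That theorem compares the $K_{+}$ mountain-pass level (\ref{critical value}) with the corresponding level over radial functions, and yields nonradiality as soon as $p$ is large enough that a suitably constructed non-radial competitor beats the radial minimum; the threshold is expressed in terms of the Hardy constant $\beta_{0}(B_{1})=(N-2)^{2}/4$. Specializing the threshold to $B_{1}$ with the Hénon weight $|x|^{\alpha}$ produces the explicit bound $\frac{16(N+2)}{(N-2)^{2}}+2<p$ stated in the theorem, completing the proof.
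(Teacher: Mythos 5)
Your structural outline matches the paper exactly: apply Theorem \ref{var-pri} with $\lambda=0$, $a=|x|^{\alpha}$, $K=K_{+}$; verify compact embedding and pointwise invariance; bootstrap to classical regularity; and invoke Theorem \ref{nonradial} with $\beta_{0}(B_1)=(N-2)^{2}/4$ for part (2). The pointwise invariance argument (cut-off, evenness of $v^{k}$ across $\theta=\pi/4$ via the harmonic $W$ in $B_1\setminus\{0\}$ with $N\ge 3$, Weth-type maximum principle -- for which the key near-origin estimate $|v^{k}_{\theta}|\le Cr$ is needed -- and the limit $k\to\infty$) and the nonradiality argument are all correctly identified.

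However the embedding step, which you yourself call the main obstacle, is not established -- only asserted via a scaling heuristic. You are right that $K_{+}$-monotonicity reduces the estimate to the angular strip $\theta\in(\pi/8,\pi/4)$ where the angular weight is comparable to a constant, and you are right that dimensional analysis of the resulting ``two-variable H\'enon-type weighted embedding'' (radial weight $r^{N+\alpha-1}$ on the $L^{p}$ side, $r^{N-1}$ on the Dirichlet side) predicts the threshold $p<\frac{2N+2\alpha}{N-2}$. But no such embedding is an off-the-shelf fact one can just quote. Ni's argument hinges on genuine radial symmetry to produce pointwise decay $|u(r)|\le C\,r^{-(N-2)/2}\|\nabla u\|_{L^{2}}$; when one tries to integrate the resulting radial $L^{p}$-estimate over $\theta$, the $\theta$-integral lands \emph{outside} the $2/p$-power, which is not the estimate one wants. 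Rearrangement or concentration-compactness in the angular direction is also unavailable. This is precisely where the paper does something nontrivial: Proposition \ref{henon_imbed} iterates the one-dimensional Sobolev bound $u(s,t)^{2}\le\bigl(\int_{s}^{\infty}|\nabla u|\,d\tau_1\bigr)\bigl(\int_{t}^{\infty}|\nabla u|\,d\tau_2\bigr)$ with $u=\phi^{\gamma_{k}}$ and cleverly chosen weight exponents $\beta_{i}^{k}$ so that each step reproduces the previous one after Cauchy--Schwarz, seeded by the critical Sobolev embedding. This produces the family $\int_{\widehat\Omega}\phi^{2^{*}+2k}s^{(k+1)(m-1)}t^{(k+1)(n-1)}\,ds\,dt\le C_{k}$, and Corollary \ref{imbed_+} then couples it to the $K_{+}$-monotonicity (via an averaging/Jensen bound in $\theta$ -- the paper uses $\phi(r,\theta)\le\frac{16}{\pi}\int_{3\pi/16}^{\pi/4}\phi(r,\hat\theta)\,d\hat\theta$, not the pointwise shift $\theta\mapsto\theta+\pi/8$ of (\ref{first_est_1}), because the $r^{\alpha}$-weight has to be tracked through the angular reduction) and a final H\"older interpolation in the parameter $k$ to reach every $p\le\frac{2N+2\alpha}{N-2}$. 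Without Proposition \ref{henon_imbed} or an equivalent substitute, the continuous embedding -- and hence, after interpolation against compactness in $L^{1}$, the compact embedding that is the entry requirement for Theorem \ref{var-pri} -- is not proven, and the existence part of the theorem is left unsupported.

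One minor further remark: Theorem \ref{nonradial} is not a comparison of the $K_{+}$-level with a radial-level; it is a second-variation argument. Assuming the ground state $u$ is radial, it constructs $v=u(r)\psi(\theta)$ with $\psi$ the first nontrivial eigenfunction of (\ref{eigen_p}) and shows via the Hardy inequality and $\mu_{1}=4(N+2)$ that the second variation is strictly negative, then exhibits a competing path whose max undercuts $E(u)=c$, contradicting the mountain-pass characterization. The mechanism is different from the one you describe, though your threshold is the right one.
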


 \begin{remark} Note all these results can immediately give results regarding fast decay solutions of related problems on exterior domains after applying a Kelvin transform. 
 \end{remark} 
 
 We shall need some preliminaries before proving this theorem. 
\begin{prop} \label{henon_imbed} (Imbedding iteration) Let $ \Omega$ denote a bounded domain of double revolution in $ \IR^N= \IR^{m+n}$ (here $m$ and $n$ need not be equal). For all integers $k \ge 0$ there is some $C_k \ge 0$ such that for all $ 0 \le \phi \in H^1_{0,G}(\Omega)$ with $ \| \nabla \phi \|_{L^2(\Omega)}=1$ we have 
\begin{equation} \label{k_thin}
\int_{\widehat{\Omega}} ( \phi(s,t))^{2^*+2k} s^{(k+1)(m-1)} t^{(k+1)(n-1)} ds dt \le C_k,
\end{equation} where $2^*=2_N^*=\frac{2N}{N-2}$. 
 \end{prop}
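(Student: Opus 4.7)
The plan is by induction on $k$, using a lift of $\phi$ to a higher-dimensional ambient space combined with the Sobolev imbedding. For the base case $k=0$, the classical Sobolev inequality $\|\phi\|_{L^{2^*}(\Omega)} \le C \|\nabla \phi\|_{L^2(\Omega)}$ in $\IR^N$, combined with the $G$-symmetry reduction formula $\int_\Omega F(|x|,|y|)\,dx = c(m,n)\int_{\widehat{\Omega}} F(s,t)\,s^{m-1}t^{n-1}\,ds\,dt$ for $G$-invariant $F$, immediately yields the bound at $k=0$ under the normalization $\|\nabla\phi\|_{L^2(\Omega)}=1$.

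For the inductive step I set $m_k:=(k+1)(m-1)+1$, $n_k:=(k+1)(n-1)+1$, and $M_k:=m_k+n_k=(k+1)(N-2)+2$. The lift of a function $f(s,t)$ is $\tilde f(X,Y):=f(|X|,|Y|)$ for $(X,Y)\in\IR^{m_k}\times\IR^{n_k}$, and the volume formula gives
\[
\int_{\IR^{M_k}} \tilde f^p\, dX\, dY \;=\; \omega_{m_k-1}\,\omega_{n_k-1}\int_{\widehat{\Omega}} f(s,t)^p\, s^{(k+1)(m-1)} t^{(k+1)(n-1)}\, ds\, dt.
\]
A short computation using $M_k-2=(k+1)(N-2)$ verifies the key algebraic identity
\[
(k+1)\cdot 2^*_{M_k} \;=\; 2^* + 2k, \qquad 2^*_{M_k}:=\frac{2M_k}{M_k-2},
\]
i.e.\ $2^*_{M_k}=2+4/((k+1)(N-2))$ and $2^*+2k=2+4/(N-2)+2k$, and both products evaluate to $2(k+1)+4/(N-2)$.

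My plan is then to apply the Sobolev inequality in $\IR^{M_k}$ to the lifted function $\tilde u_k:=\tilde\phi^{k+1}$ (justified first for truncations $\min\{\phi,j\}^{k+1}$, then passing to the limit $j\to\infty$). Via the identity above, the left-hand side $\int_{\IR^{M_k}} \tilde u_k^{2^*_{M_k}}\,dX\,dY$ is precisely a power of the quantity $I_k:=\int_{\widehat{\Omega}} \phi^{2^*+2k}\,s^{(k+1)(m-1)} t^{(k+1)(n-1)}\,ds\,dt$ we wish to bound, while the right-hand side expands via the chain rule to a multiple of $\int_{\widehat{\Omega}} \phi^{2k}(\phi_s^2+\phi_t^2)\,s^{(k+1)(m-1)} t^{(k+1)(n-1)}\,ds\,dt$.

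The main technical step, which I expect to be the hardest part, is controlling this weighted gradient integral. Using boundedness of $\widehat{\Omega}$ (so $s,t\le R$) to strip the factor $s^{k(m-1)} t^{k(n-1)} \le R^{k(N-2)}$ reduces the task to bounding
\[
\int_{\widehat{\Omega}} \phi^{2k}(\phi_s^2+\phi_t^2)\,s^{m-1}t^{n-1}\,ds\,dt \;=\; \frac{1}{(k+1)^2 c(m,n)}\,\|\nabla \phi^{k+1}\|_{L^2(\Omega)}^2.
\]
I plan to close the induction by coupling this to the inductive hypothesis at level $k-1$ via a H\"older inequality with exponents chosen so that the product is controlled by $\|\nabla\phi\|_{L^2(\Omega)}^2\cdot I_{k-1}^{\theta}$ for some $\theta=\theta(N,k)$. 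The delicate points are (i) matching the H\"older exponents consistently across the induction, and (ii) securing the weak regularity of $\tilde\phi^{k+1}$ needed for Sobolev to apply; the former is handled by a careful computation, the latter by truncation and density.
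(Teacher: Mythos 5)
The lifting idea is clever, and your algebraic identity $(k+1)\cdot 2^*_{M_k} = 2^*+2k$ is correct, but the inductive step has a genuine gap that I do not think can be repaired along the route you propose.

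After applying the $H^1$-Sobolev inequality in $\IR^{M_k}$ to $\tilde\phi^{k+1}$, the right-hand side is $\|\nabla\tilde\phi^{k+1}\|_{L^2(\IR^{M_k})}^2$, which (after the radial reduction) is a constant times $\int_{\widehat\Omega}\phi^{2k}|\nabla_{s,t}\phi|^2\,s^{(k+1)(m-1)}t^{(k+1)(n-1)}\,ds\,dt$. Here $|\nabla\phi|$ appears \emph{quadratically}. To peel off the factor $\|\nabla\phi\|_{L^2(\Omega)}^2=1$ via H\"older one must place the whole $|\nabla\phi|^2 s^{m-1}t^{n-1}$ in the $L^1$ slot (since $|\nabla\phi|$ lives only in $L^2$ and no higher), leaving the complementary factor $\phi^{2k}s^{k(m-1)}t^{k(n-1)}$ in the $L^\infty$ slot; but $\phi\in L^\infty$ is precisely what is not available. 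Any intermediate exponent $p\in(1,\infty)$ forces $|\nabla\phi|^{2p}$ to be integrable, which fails. The same obstruction applies whether or not you first strip the weights by $R^{k(N-2)}$. So there is no valid choice of H\"older exponents $\theta=\theta(N,k)$ that controls this term by $\|\nabla\phi\|_{L^2}^2\cdot I_{k-1}^\theta$, and the induction does not close.

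The paper avoids this entirely by never producing a factor of $|\nabla\phi|^2\phi^{2k}$. It starts from the two-dimensional fundamental-theorem-of-calculus bound
$u(s,t)^2\le\bigl(\int_s^\infty|\nabla_{s,t}u(\tau_1,t)|\,d\tau_1\bigr)\bigl(\int_t^\infty|\nabla_{s,t}u(s,\tau_2)|\,d\tau_2\bigr)$,
multiplies by $s^{2\beta_1}t^{2\beta_2}$, integrates, and obtains an inequality of the schematic form $\int u^2\,(\mathrm{wt})^2\le\bigl(\int|\nabla u|\,(\mathrm{wt})\bigr)^2$ in which the gradient appears \emph{to the first power}. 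Substituting $u=\phi^\gamma$ and doing a single Cauchy--Schwarz then leaves exactly one factor equal to $\int|\nabla\phi|^2 s^{m-1}t^{n-1}=\|\nabla\phi\|_{L^2(\Omega)}^2$ and a second factor which, with the choices $\gamma_k=\tfrac{2^*}{2}+k$ and $\beta_i^k=\tfrac{(k+1)(m-1)}{2}$ (resp.\ $n-1$), is precisely the iterate at level $k-1$. This is why the paper's iteration descends cleanly from $k$ to $k-1$; your $L^2$-Sobolev lift cannot reproduce this descent. (If you instead try the $L^1$-Sobolev inequality in $\IR^{M_k}$, the gradient does appear linearly, but after the natural Cauchy--Schwarz the weight exponent \emph{jumps up} from $(k+1)(m-1)$ to $(2k+1)(m-1)$, i.e.\ you land at level $2k$, and the iteration still fails to terminate.)
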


 \begin{proof} Take $ 0 \le u \in H^1_{0,G}(\Omega)$ (and say Lipschitz)  and take $ \beta_i>0$.   By extending $ u$ to the full first quadrant by extending it to be zero outside of $ \widehat{\Omega}$ we have 
 \[ u(s,t) \le \int_s^\infty | \nabla_{s,t} u( \tau_1, t)| d \tau_1,\]  
  \[ u(s,t) \le \int_t^\infty | \nabla_{s,t} u( s, \tau_2)| d \tau_2,\] and hence we have 
 \[ u(s,t)^2\le \int_s^\infty | \nabla_{s,t} u( \tau_1, t)| d \tau_1 \int_t^\infty | \nabla_{s,t} u( s, \tau_2)| d \tau_2,\] 
   and we now multiply by sides by $ s^{2 \beta_1} t^{2 \beta_2}$ where $ \beta_i>0$ and integrate over $ \widehat{\Omega}$ we arrive at 
  \begin{equation} \label{step_gen}
  \int_{\widehat{\Omega}} u(s,t)^2 s^{2\beta_1} t^{2\beta_2} ds dt \le \left(  \int_{\widehat{\Omega}} |\nabla_{s,t} u(s,t)| s^{\beta_1} t^{\beta_2} ds dt \right)^2.
  \end{equation}  We now suppose $ 0 \le \phi \in H^1_{0,G}(\Omega)$ is smooth and with the gradient assumption as in the hypothesis and we put $u=\phi^\gamma$ into (\ref{step_gen}) where $ \gamma \ge 1$.   Then we arrive at 
  \begin{eqnarray*}
  \int_{\widehat{\Omega}} \phi^{2\gamma} s^{2 \beta_1} t^{2 \beta_2} ds dt &\le &\gamma^2 \left( \int_{\widehat{\Omega}} \left\{| \nabla_{s,t} \phi| s^\frac{m-1}{2} t^\frac{n-1}{2} \right\} \left\{\phi^{\gamma-1} s^{\beta_1 - \frac{m-1}{2}} t^{\beta_2- \frac{n-1}{2}} \right\} ds dt \right)^2 \\
  &\le& \gamma^2 \| \nabla \phi \|_{L^2(\Omega)}^2 \int_{\widehat{\Omega}} \phi^{2(\gamma-1)} s^{2 \beta_1 - (m-1)} t^{2 \beta_2 - (n-1)} ds dt,
  \end{eqnarray*} where we performed the Cauchy–Schwarz inequality
 and recall $ \| \nabla \phi \|_{L^2}=1$.  We will now use this inequality to perform an iteration in $ \gamma$ and $ \beta_i$.  For $k \ge 0$ define 
  \[ \gamma_k = \frac{2^*}{2}+k, \quad \beta_1^k = \frac{(k+1) (m-1)}{2}, \quad \beta^k_2= \frac{(k+1)(n-1)}{2}.\]   Now suppose for $k \ge 1$ we have 
  \[ \int_{\widehat{\Omega}} \phi^{2 \gamma_{k-1}} s^{2 \beta_1^{k-1}} t^{2\beta_2^{k-1}} ds dt =C_k,\]  then by putting $ \gamma=\gamma_k$ and $\beta_1= \beta_1^k, \beta_2= \beta_2^k$  into (\ref{step_gen}) we arrive at 
  \begin{eqnarray*}
  \int_{\widehat{\Omega}} \phi^{2 \gamma_k} s^{2 \beta_1^k} t^{2 \beta_2^k} ds dt & \le & \gamma_k^2 \int_{\widehat{\Omega}} \phi^{2(\gamma_k-1)} s^{2 \beta_1^k-(m-1)} t^{2 \beta_2^k-(n-1)} ds dt \\
  &=& \gamma_k^2 \int_{\widehat{\Omega}} \phi^{2 \gamma_{k-1}} s^{2 \beta_1^{k-1}} t^{2 \beta_2^{k-1}} ds dt \\
  &=& \gamma_k^2 C_k,
  \end{eqnarray*} after noting 
  \[ 2(\gamma_k-1)=2\gamma_{k-1}, \quad 2 \beta_1^{k-1} = 2\beta_1^k-(m-1), \quad 2 \beta_2^{k-1}=2 \beta_2^k -(n-1).\]   Also note we can start the iteration since the first term is given by 
  \[ \int_{\widehat{\Omega}} \phi^{2^*} s^{m-1} t^{n-1} ds dt,\] which is controlled by $ \| \nabla \phi \|_{L^2(\Omega)}$ by the classical critical Sobolev imbedding theorem. 
  \end{proof}

 \begin{coro} \label{imbed_+} Let $ m=n=\frac{N}{2}$ and suppose we have $ 1 <p  \le \frac{2N+2\alpha}{N-2}$ where $ \alpha>0$.
  Then we have $K_+ \subset L^p(B_1, |x|^\alpha dx)$ (ie. a continuous imbedding).  
  \end{coro}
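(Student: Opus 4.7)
The plan is to combine the angular symmetry and monotonicity of functions in $K_+$ with the iterated weighted imbeddings of Proposition \ref{henon_imbed}, and then to interpolate via two applications of H\"older's inequality. First I would rewrite the target integral in polar coordinates $s=r\cos\theta$, $t=r\sin\theta$:
$$\int_{B_1} u^p |x|^\alpha\,dx = C_N \int_0^1 r^{N-1+\alpha} \int_0^{\pi/2} u(r,\theta)^p \cos^{m-1}\theta\,\sin^{n-1}\theta\,d\theta\,dr.$$
Evenness of $u \in K_+$ across $\theta=\pi/4$ cuts the $\theta$-integral down to $(0,\pi/4)$, and on this subinterval the monotonicity $u_\theta\ge 0$ together with the shift $u(r,\theta)\le u(r,\theta+\pi/8)$ (as in the proof of Proposition \ref{imbed_pi4_ann}) followed by the change of variables $\tau=\theta+\pi/8$ absorbs the range $(0,\pi/8)$ into an integral over $(\pi/8,\pi/4)$, at the cost of comparing $\cos^{m-1}(\tau-\pi/8)\sin^{n-1}(\tau-\pi/8)$ to $\cos^{m-1}\tau\sin^{n-1}\tau$ (this comparison is bounded, using monotonicity of $\sin$ and the fact that both factors are controlled on $[\pi/8,\pi/4]$). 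Since $\cos^{m-1}\theta\,\sin^{n-1}\theta$ is bounded above and below by positive constants on $[\pi/8,\pi/4]$, this gives
$$\int_{B_1} u^p |x|^\alpha\,dx \;\le\; C' \int_0^1 r^{N-1+\alpha}\, f_p(r)\,dr,\qquad f_p(r):=\int_{\pi/8}^{\pi/4} u(r,\theta)^p\,d\theta.$$

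Next I would fix an integer $k$ with $q_k := \tfrac{2N}{N-2}+2k > p$ and specialize Proposition \ref{henon_imbed} to $m=n=N/2$; rewriting its conclusion in polar coordinates and again using that $(\cos\theta\sin\theta)^{(k+1)(N-2)/2}$ is bounded below on the good sector $[\pi/8,\pi/4]$, one obtains
$$\int_0^1 r^{(k+1)(N-2)+1}\, f_{q_k}(r)\,dr \;\le\; C_k\,\|\nabla u\|_{L^2}^{q_k}.$$
A first H\"older on the bounded interval $[\pi/8,\pi/4]$ gives $f_p(r)\le C\, f_{q_k}(r)^{p/q_k}$. A second H\"older in $r$ with conjugate exponents $q_k/p$ and $q_k/(q_k-p)$ then interpolates against the previous display:
$$\int_0^1 r^{N-1+\alpha} f_{q_k}(r)^{p/q_k}\,dr \;\le\; \left(\int_0^1 f_{q_k}(r)\,r^{(k+1)(N-2)+1}\,dr\right)^{\!p/q_k}\!\left(\int_0^1 r^{B}\,dr\right)^{\!(q_k-p)/q_k},$$
where $B$ is forced by equating the two weight exponents.

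The remaining step is purely algebraic: $\int_0^1 r^B\,dr$ converges iff $B>-1$, which after rearranging becomes $p\bigl(k(N-2)+N\bigr)<(N+\alpha)q_k$. Substituting $q_k=2\bigl(N+k(N-2)\bigr)/(N-2)$ makes the factor $N+k(N-2)$ cancel, leaving the uniform condition $p<\tfrac{2(N+\alpha)}{N-2}=\tfrac{2N+2\alpha}{N-2}$. Since $q_k\to\infty$ with $k$, the constraint $p<q_k$ is costless, and together this produces $\|u\|_{L^p(B_1,|x|^\alpha dx)}\le C\|\nabla u\|_{L^2}$ on the whole open range. The main obstacle is the bookkeeping in this algebraic step, namely seeing that $N+k(N-2)$ cancels so that the critical exponent is pinned at $(2N+2\alpha)/(N-2)$ independently of how many steps of the iteration in Proposition \ref{henon_imbed} are used. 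The equality case $p=(2N+2\alpha)/(N-2)$ is borderline for this H\"older argument and would need a mild refinement, either by allowing non-integer $\gamma$ in the iteration producing Proposition \ref{henon_imbed} or, when $\alpha/(N-2)$ happens to be an integer, by matching $p$ directly to $q_k$ and comparing $r^\alpha$ with $(st)^{\alpha/2}$ on the good sector.
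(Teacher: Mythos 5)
Your argument differs structurally from the paper's and, as you yourself flag, it does not actually reach the endpoint $p=\tfrac{2N+2\alpha}{N-2}$ which the corollary asserts and which the paper needs later (the regularity iteration in the proof of Theorem~\ref{non-radial-henon} explicitly invokes the continuous imbedding at $q=\tfrac{2N+2\alpha}{N-2}$). At the endpoint your second H\"older in $r$ degenerates: the exponent $B$ you are forced into satisfies $B>-1$ iff $p<\tfrac{2N+2\alpha}{N-2}$, so $\int_0^1 r^B\,dr$ diverges exactly when you need it most, and the constant cannot be salvaged by taking $k$ large. The reason is that you first collapse the $\theta$-integral with Jensen and then interpolate the resulting radial quantity $\int r^{N-1+\alpha}f_p(r)\,dr$ against the \emph{trivial} reference measure $r^B\,dr$; the latter carries no Sobolev information, so it cannot be endpoint-sharp.

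The paper's route avoids this by never leaving the $x$-variable. Its first step proves the \emph{exact} endpoint cases $(p,\alpha)=(2^*+2k,\,2k(n-1))$ for integers $k\ge 0$ directly from Proposition~\ref{henon_imbed} (these are endpoints because $2^*+2k=\tfrac{2N+2\cdot 2k(n-1)}{N-2}$). Its second step is a single H\"older in $L^1(B_1)$ that splits $u^p|x|^{\alpha_p}=\bigl(u^{\beta_k}|x|^{\alpha_p}\bigr)\bigl(u^{p-\beta_k}\bigr)$ with $\alpha_p:=p(n-1)-2n$, chosen so that the two H\"older factors land precisely on the weighted endpoint $\int u^{2^*+2k}|x|^{2k(n-1)}\,dx$ and the unweighted endpoint $\int u^{2^*}\,dx$; interpolating between two sharp inequalities produces a sharp inequality, so $p=\tfrac{2N+2\alpha}{N-2}$ is included. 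Your first suggested fix (allowing ``non-integer $\gamma$'' in Proposition~\ref{henon_imbed}) does not obviously help, because the iteration there is anchored at the classical critical Sobolev inequality and advances the weights in whole steps; a non-integer $\gamma$ has no corresponding base case. Your second suggestion (matching $p$ to $q_k$ when $\alpha/(N-2)$ is an integer) is exactly the paper's first step, but it only covers a discrete family of $\alpha$. To repair your proof for general $\alpha$ you should replace your last H\"older by the paper's $x$-level split, i.e.\ interpolate against $\int_0^1\int_{\pi/8}^{\pi/4} u^{2^*}r^{N-1}\,d\theta\,dr$ rather than against $\int_0^1 r^B\,dr$. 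The rest of your reduction (evenness across $\theta=\pi/4$, the shift $u(r,\theta)\le u(r,\theta+\pi/8)$, boundedness of $(\cos\theta\sin\theta)^{n-1}$ on $[\pi/8,\pi/4]$, and the conversion of Proposition~\ref{henon_imbed} to polar form) is sound and parallels the paper's.
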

    Note the imbedding if optimal after considering the radial imbedding. \\
  
\noindent 
\textbf{Proof of Corollary \ref{imbed_+}.}  We first prove the result for the case of $p=  2^* + 2k$ for some positive integer $k$ and we suppose $ \alpha$ satisfies the hypothesis.  Let $ \phi \in K_+$ and we suppose $ \| \nabla \phi \|_{L^2(\Omega)}=1$.   By the symmetry of the function it is sufficient we bound the desired integral on $ \{(s,t) \in \widehat{\Omega}: s >t \}$  which in polar coordinates corresponds to $ \{(\theta,r): 0<\theta< \frac{\pi}{4}, 0<r<1 \}$.  Since  $ \phi \in K_+$ we have 
  \[ \phi(r,\theta) \le \frac{16}{\pi} \int_\frac{3 \pi}{16}^\frac{\pi}{4} \phi(r, \hat{\theta}) d \hat{\theta} \quad \mbox{ for } 0<\theta<\frac{\pi}{8},\] and by Jensen's inequality we have (for $p=2^*+2k$) 
     \[ \phi(r,\theta)^p \le \frac{16}{\pi} \int_\frac{3 \pi}{16}^\frac{\pi}{4} \phi(r, \hat{\theta})^p d \hat{\theta}.\]  Then note if we write out the $L^p(B_1, |x|^\alpha dx)$ norm of $ \phi$ over the region corresponding to $ 0<\theta<\frac{\pi}{8}$ we arrive at (note the extra power of $r$ is from $ds dt= r dr d \theta$)
     \[ \int_0^1 \int_0^\frac{\pi}{8} \phi(r,\theta)^p r^\alpha r^{2(n-1)} r \cos^{n-1}(\theta) \sin^{n-1}(\theta)  d \theta dr \]but this is bounded above by 
     \[ \frac{16}{\pi} \int_0^1 \int_\frac{3 \pi}{16}^\frac{\pi}{4} \phi(r, \hat{\theta})^p d \hat{\theta} r^{\alpha +2(n-1) +1} dr  \int_0^\frac{\pi}{8} \cos^{n-1}(\theta) \sin^{n-1}(\theta) d \theta,\] and hence we just need to control  
     \begin{equation} \label{need_con}
     \int_0^1 \int_\frac{3 \pi}{16}^\frac{\pi}{4} \phi(r, \theta)^p r^{\alpha+2(n-1)+1} d \theta dr.
     \end{equation} 
     
     We now show for all $ 0<\theta_0< \frac{\pi}{4}$ we can control 
     \begin{equation} \label{want}
     \int_0^1 \int_{\theta_0}^\frac{\pi}{4} \phi(r, \theta)^p r^{\alpha+2(n-1)+1} d \theta dr.
     \end{equation}

     We now write out (\ref{k_thin}) in terms of polar coordinates and noting the sine and cosine terms don't play a role now we see (\ref{k_thin}) gives the existence of some $D_k(\theta_0)>0$ such that 
     \begin{equation} \label{have_10}
     \int_0^1 \int_{\theta_0}^\frac{\pi}{4} \phi(r, \theta)^p r^{2(k+1)(n-1)+1}  d \theta dr \le D_k.
     \end{equation} 
     Note the assumption on $ \alpha$ is exactly $ 2k(n-1) \le \alpha$ and this gives us that 
     \[\alpha+2(n-1)+1 \ge 2(k+1)(n-1)+1,\] and hence we get the desired result for the case of $p=2^*+2k$. \\
     
     %\textbf{((ABBAS...FOLLOWING NEEDS TO BE CHECKED IN EXACT DETAIL.... CAN'T TELL IF PARAMETERS ARE CORRECT OR NOT))} \\ 
     %\textbf{Craig... I am checking it now}\\

     We now prove the result for general $p$.   Let $ p$ and $ \alpha $ satisfy the hypothesis and we assume $p>2^*$.   First note that this assumption on $p$ implies $ p(n-1)-2n>0$.  Since $ p \le \frac{2N+2 \alpha}{N-2}$ we have $ \alpha \ge p(n-1)-2n$.  Define $ \alpha_p=p(n-1)-2n$ and hence  $ p=\frac{2N+2 \alpha_p}{N-2}$ so $ p-2^*= \frac{\alpha_p}{n-1}$.   Pick $k$ large integer such that  $ 2^* +2k >p$ and $ 2k(n-1)>\alpha_p$.   Then set 
      \[ \beta_k= \frac{(2^*+2k) \alpha_p}{2k(n-1)} \] and note $ \beta_k<p$ for large $k$.
       Set 
    $t_k=\frac{2^*+2k}{\beta_k}>1$ for large $k$.  Then note we have $ t_k'(p-\beta_k)=2^*$.   Hence we have 
    \begin{eqnarray*}
    \int_\Omega  \phi(x)^p |x|^{ \alpha_p} dx &=& \int_\Omega \phi^{\beta_k} |x|^{\alpha_p} \phi^{p-\beta_k} dx \\ 
    & \le & \left( \int_\Omega \phi^{\beta_k t_k} |x|^{\alpha_p t_k} dx \right)^\frac{1}{t_k} \left( \int_\Omega \phi^{ t_k' (p-\beta_k)} dx \right)^\frac{1}{t_k'} \\
    &=& \left( \int_\Omega \phi^{2^*+2k} |x|^{2k(n-1)} dx \right)^\frac{1}{t_k} \left( \int_\Omega \phi^{2^*} dx \right)^\frac{1}{t_k'}
    \end{eqnarray*} and this gives us the desired bound at least in the case of $ \alpha_p$.  Noting that $ \alpha \ge \alpha_p$ gives the desired result. 
\hfill $ \Box$  

\begin{prop} \label{henon_prop_point}(Pointwise invariance for the H\'enon equation) Suppose $N$ is even with $ 2m=2n=N$,  $u \in K_+$ and $v$ solves  \begin{equation} \label{eq_hen_pointwise}
\left\{\begin{array}{ll}
-\Delta v = |x|^\alpha u^{p-1} &  \mbox{ in } B_1, \\
v= 0 &   \mbox{ on } \partial B_1.
\end {array}\right.
\end{equation} Then $ v \in K_+$. 
\end{prop}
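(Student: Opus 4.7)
The plan is to adapt the argument of Proposition \ref{pointwise_inv} to the ball setting, the new wrinkle being that $B_1$ contains the origin. Let $u_k=\min\{u,k\}\in K_+$ and let $v^k \in H^1_0(B_1)$ be the unique solution of $-\Delta v^k = |x|^\alpha u_k^{p-1}$. Since the data is $G$-invariant and even across $\theta=\pi/4$, elliptic regularity gives $v^k \in H^1_{0,G}(B_1)\cap C^{1,\alpha}(\overline{B_1})$, and $v^k=v^k(r,\theta)$ satisfies the polar form (\ref{polar_k}) with $h(\theta)=(m-1)\tan\theta-(n-1)\cot\theta$ on $\widetilde{\Omega}=\{0<r<1,\,0<\theta<\pi/2\}$. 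Because $m=n$, $h$ is odd and the right-hand side $r^\alpha u_k^{p-1}$ is even across $\theta=\pi/4$; uniqueness for the linear Dirichlet problem then forces $v^k$ itself to be even across $\theta=\pi/4$.

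Next I set $w:=v^k_\theta$ and differentiate in $\theta$, as in (\ref{eq_w}), to obtain on $\widetilde{\Omega}_0=\{0<r<1,\,0<\theta<\pi/4\}$
\begin{equation*}
-w_{rr}-\tfrac{N-1}{r}w_r-\tfrac{1}{r^2}w_{\theta\theta}+\tfrac{h(\theta)}{r^2}w_\theta+\tfrac{h'(\theta)}{r^2}w=\partial_\theta\bigl(r^\alpha u_k^{p-1}\bigr).
\end{equation*}
The zero-order coefficient $h'(\theta)=(m-1)\sec^2\theta+(n-1)\csc^2\theta\ge 0$, and the right-hand side is nonnegative because $u\in K_+$ forces $u_\theta\ge 0$ in $\widetilde{\Omega}_0$, a property preserved by truncation. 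The Dirichlet and symmetry conditions on $v^k$ translate into $w\equiv 0$ on $\{\theta=0\}\cup\{\theta=\pi/4\}\cup\{r=1\}$, and the $C^{1,\alpha}$-regularity together with $\nabla v^k(0)=0$ (from $G$-invariance at the origin) gives $w=o(1)$ as $r\to 0$.

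The main obstacle is to justify rigorously the formal maximum principle $w\ge 0$ on $\widetilde{\Omega}_0$, given that the zero-order coefficient $h'(\theta)/r^2$ is singular at the origin, which is an interior point of the ambient domain (unlike in the annular setting of Proposition \ref{pointwise_inv}). Following the Weth-type truncation at the end of the proof of Proposition \ref{pointwise_inv}, for small $\varepsilon>0$ I would test the weak formulation against $\psi_\varepsilon:=(-w-\varepsilon)_+$. Thanks to the boundary behaviour just listed, $\psi_\varepsilon$ is compactly supported in $\widetilde{\Omega}_0$ and, for $\varepsilon$ small, also vanishes in a neighbourhood of the origin, so it is admissible and supported entirely away from the singular locus. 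On the support of $\psi_\varepsilon$ one has $w<0$ and $\psi_\varepsilon\ge 0$, so $h'(\theta)w\psi_\varepsilon/r^2\le 0$ and $\nabla w\cdot\nabla\psi_\varepsilon=-|\nabla\psi_\varepsilon|^2$; since the driving term is nonnegative, the resulting identity forces $\int|\nabla\psi_\varepsilon|^2\le 0$, hence $\psi_\varepsilon\equiv 0$, and letting $\varepsilon\downarrow 0$ gives $w\ge 0$.

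The passage $k\to\infty$ then proceeds as in Proposition \ref{pointwise_inv}: testing against $v^k$ and using the continuous embedding of Corollary \ref{imbed_+} together with the bound $\|u_k\|_{L^p(B_1,|x|^\alpha dx)}\le\|u\|_{L^p(B_1,|x|^\alpha dx)}$ produces a uniform $H^1_{0,G}$-bound on $v^k$; weak $H^1_{0,G}$-convergence combined with strong $L^{p'}(|x|^\alpha dx)$-convergence of $u_k^{p-1}\to u^{p-1}$ (by dominated convergence, dominated by $u^{p-1}$) identifies the limit $v$ as the sought solution of (\ref{eq_hen_pointwise}), and the weak closedness of the convex cone of nonnegative, $G$-symmetric, $\theta=\pi/4$-even functions with $v_\theta\ge 0$ in $\widetilde{\Omega}_0$ places $v$ in $K_+$.
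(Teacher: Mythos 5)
Your proof is correct and follows essentially the same route as the paper: truncate $u$ to $u_k$, solve the linear problem, establish evenness across $\theta=\pi/4$ (the paper writes $W=v^k-\widehat{v}$, shows $\Delta W=0$ on $B_1\setminus\{0\}$ and removes the origin by regularity; you appeal directly to Dirichlet uniqueness, which is fine since the $\theta\mapsto\pi/2-\theta$ reflection is an orthogonal map on $\IR^N$ when $m=n$), differentiate in $\theta$, and run the Weth-style truncated maximum-principle test away from the axes and the origin. Your observation that $\psi_\varepsilon=(-w-\varepsilon)_+$ vanishes near $r=0$ because $\nabla v^k(0)=0$ by $G$-invariance (valid as $m=n\ge 2$) is an equivalent substitute for the paper's bound $|v^k_\theta|\le Cr$ drawn from the spherical-gradient decomposition; both make the test function admissible. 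The passage to $k\to\infty$ is handled as in the paper.
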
 

\begin{proof} The proof that proved the analagous result on an annulus works in this case also (the main difference is one needs to take some care near the origin now).  In this proof we will write $ \widehat{\Omega}, \widetilde{\Omega}, \widetilde{\Omega}_0$ even though its understood that $\Omega=B_1$. Let $ u \in K_+$ and we perform the cut off as always $ u_k(x) = \min \{ u(x),k \}$ and we let $v^k$ denote a solution of  
\begin{equation} \label{eq_hen_pointwise_cut}
\left\{\begin{array}{ll}
-\Delta v^k = |x|^\alpha u_k^{p-1} &  \mbox{ in } B_1, \\
v^k= 0 &   \mbox{ on } \partial B_1.
\end {array}\right.
\end{equation}

Writing this in term of polar coordinates gives 
\begin{equation} \label{polar_k_hen}
    -v_{rr}^k -\frac{(N-1) v_r^k}{r}- \frac{v_{\theta \theta}^k}{r^2} + \frac{v_\theta^k}{r^2} h(\theta) = r^\alpha u_k^{p-1}=G(r,\theta), \; \mbox{ in } \widetilde{\Omega},
    \end{equation} where $h$ is defined as in (\ref{defn_h}) with $m=n$ and note that $G$ is even across $ \theta=\frac{\pi}{4}$ after noting the conditions on $u$.  Using the symmetry of $v^k$ one sees, as in the case of the annulus, that $v^k_\theta=0$ on $ \theta=0,\frac{\pi}{2}$ provided one stays away from the origin.   As in the case of the annulus we consider $ \widehat{v}(r,\theta)= v^k(r, \frac{\pi}{2}-\theta)$ and as before $ \widehat{v}$ also satisfies (\ref{polar_k_hen}) with the same boundary conditions as $v^k$.    Set $ \widehat{v}(x)$ to be $ \widehat{v}(r,\theta)$ written in terms of $ x$ and we set $W(x)=v^k(x) - \widehat{v}(x)$.  Then note $ \Delta W(x)=0$ in $B_1 \backslash \{0\}$ with $ W=0$ on $ \partial B_1$ and since we are assuming the dimension $N \ge 3$ we can use the regularity of $W$ to see that $W=0$ and hence we have $ v^k$ is even across $ \theta= \frac{\pi}{4}$ and hence we have $ v^k_\theta(r, \frac{\pi}{4})=0$ for $ 0<r \le 1$.   \\

\noindent
\textbf{Monotonicity.}  Let $ w=v_\theta^k$ and then note that if we take a derivative in $\theta$ of the equation for $ v^k$ we arrive at 
\begin{equation} \label{eq_w_hen}
-w_{rr} -\frac{(N-1) w_r}{r}- \frac{w_{\theta \theta}}{r^2} + \frac{w_\theta}{r^2} h(\theta) + \frac{w}{r^2} h'(\theta)= \partial_\theta \left\{r^\alpha u_k^{p-1} \right\}, \mbox{ in } \widetilde{\Omega},
\end{equation} and in particular the equation is satisfied in $ \widetilde{\Omega}_0$ with $ w=0$ on the portion of $ \partial \widetilde{\Omega}_0$ corresponding to $ \theta=0, \frac{\pi}{4}$ and $ w=0$ on $ r=1$.   As before a computation shows that if write the left hand side of  (\ref{eq_w_hen}) in terms of $x$ we arrive at 
\[ -\Delta w(x) + \frac{(n-1) |x|^2 w(x)}{ (x_1^2 + \cdot \cdot \cdot + x_m^2)( x_{m+1}^2 + \cdot \cdot \cdot+x_N^2)},\] which, at least formally, satisfies a maximum principle.   We can now proceed as in the annulus case to show that $ w  \ge 0$ in $ \widetilde{\Omega}_0$;  the only real difference is the added singularity at the origin.   Note that $ w$ is H\"older continuous and there is some $C>0$ such that $ |w_\theta| \le C r$.  This bound allows us to proceed as before using the method of \cite{Weth_annulus}   to see that $ w \ge 0$ in $ \widetilde{\Omega}_0$. \\

%\bibitem{gelf_a} A. Aghajani, C. Cowan and A. Moameni, \emph{The Gelfand problem on annular domains of double revolution with monotonicity}, (2021) preprint.

%\bibitem{orgin_an} C. Cowan and A. Moameni, \emph{Supercritical elliptic problems on nonradial domains via a nonsmooth variational approach}, (2021) preprint. 

%\bibitem{Weth_annulus}  A. Boscaggin, F. Colasuonno, B. Noris and Tobias Weth, \emph{A supercritical elliptic equation in the annulus}, arXiv:2102.07141, (2021).  

\noindent 
\textbf{Sending $k \rightarrow \infty$.}  We can utilize the same arguments from the case of the annular domain  in passing to the limit in $k$ in  Theorem \ref{pointwise_inv}.
\end{proof}

 \noindent
 \textbf{Proof of Theorem \ref{non-radial-henon}.}   
 Here again, we are going to use Theorem  \ref{var-pri} for the proof. 
   Note that conditions $(i)$ and $(ii)$ in Theorem \ref{var-pri} follows from Corollary  \ref{imbed_+} and Proposition \ref{henon_prop_point} respectively. This proves the existence of a weak solution $u$ of (\ref{eq_hen}).    It also follows from Theorem \ref{nonradial}
   that for 
   \[ \frac{4(N+2)}{\beta_0(B_1)} < p-2 ,\]
   the ground state solution $ u $ obtained above is non-radial. Here $\beta_0(B_1)$  is the best constant for the Hardy inequality on $B_1$,  and in fact $\beta_0(B_1)=(N-2)^2/4.$
   Thus,  our solution $u$ is non-radial provided 
   \[\frac{16(N+2)}{(N-2)^2}+2 < p< \frac{2N+2\alpha}{N-2}. \]

\noindent
{\it Regularity of the solution.}   Set $ q:=\frac{2N+2\alpha}{N-2}$ and consider $ t_0=1$ and 
\[ t_{k+1}:= \frac{q t_k}{2} - \frac{p-2}{2},\] for $ k \ge 0$.  Since $ 1<p<q$ we have, as before,  $ t_k \rightarrow \infty$.   Let $ u \in K_+$ denote the ground state and note then we have 
\[ \int_{B_1} |x|^\alpha u^{p+2(1)-2} dx = \int_{B_1} | \nabla u|^2 dx < \infty.\]  We now prove the following iteration: 
\[ \mbox{ if } \int_{B_1} |x|^\alpha u^{p+2 t_k -2} dx=C_k<\infty  \quad \mbox{ then } \int_{B_1} | x|^\alpha u^{p+2 t_{k+1}-2} dx =D_k<\infty.\] Fix $k \ge 0$ and suppose $ C_k$ is finite and then we consider 
\begin{equation} 
\phi(x)=\left\{\begin{array}{ll}
u(x)^{2t_k-1}  &  \mbox{ if } u(x)<i, \\
i^{2t_k-1}     &   \mbox{ if }u(x) \ge i, \\
\end {array}\right.
\end{equation} for positive integers $i$.   This is a suitable test function to test the equation for $u$ on and we then arrive at 
\[ (2t_k-1) \int_{\Omega_i} u^{2t_k-2} | \nabla u|^2 dx = \int_{\Omega_i} |x|^\alpha u^{p+2 t_k-2} dx + \E_{k,i},\] where $\Omega_i:=\{x \in B_1: u(x)<i\}$ and 
\[ \E_{k,i} = \int_{\Omega_i^c} |x|^\alpha u^{p-1} i^{2 t_k-1} dx,\] where $\Omega_i^c$ is the compliment of $\Omega_i$ in $B_1$. We will later show that $\E_{k,i} \rightarrow \infty$ as $ i \rightarrow \infty$ and hence lets accept this for now.  Sending $ i \rightarrow \infty$ in the above equality we arrive at 
\[ \frac{(2t_k-1)}{t_k^2} \int_{B_1} | \nabla u^{t_k}|^2 dx = \int_{B_1} | x|^\alpha u^{p+2t_k-2} dx.\]  From this we see that $u^{t_k} \in H^1_{0,G}(B_1)$ and hence we see that $u^{t_k} \in K_+$.  We can now use the continuous imbedding to see there is some $C=C_q$ such that 
\[ \frac{(2t_k-1) C_q}{t_k^2} \left( \int_{B_1} |x|^\alpha u^{t_k q} dx \right)^\frac{2}{q} \le \int_{B_1} | x|^\alpha u^{p+2t_k-2} dx=C_k,\] but note that $ q t_k = 2t_{k+1}+p-2$ and hence we have $D_k<\infty$, which proves the inductive step.   Since we have the result for $ t_0$ we can start the iteration and hence we have $ C_k$ is finite for all $k$.  Since $ \alpha>0$ we see that after a finite number of steps that $ |x|^\alpha u^{p-1} \in L^T(B_1)$ for some $T>\frac{N}{2}$ and hence we have the solution is H\"older continuous.  We can now use Schauder regularity theory to show the solution is a classical solution.   

We now prove the claim that $\E_{k,i} \rightarrow 0$ as $ i \rightarrow \infty$. First note that since 
\[ \int_{B_1} |x|^\alpha u^{p+2t_k-2} dx =C_k < \infty,\] we have 
\begin{equation} \label{i_00}
\int_{\Omega_i^c} |x|^\alpha dx \le \frac{C_k}{i^{p+2t_k-2}}.
\end{equation}
Let $ 1<\tau<\infty$ be such that $ (p-1) \tau= p+2 t_k-2$ and then note we have 
\begin{eqnarray*}
\frac{\E_{k,i}}{i^{2t_k-1}} &=& \int_{\Omega_i^c} |x|^{p-1} dx \\ 
& \le & \left( \int_{\Omega_i^c} |x|^\alpha u^{\tau(p-1)} dx \right)^\frac{1}{\tau} \left( \int_{\Omega_i^c} |x|^\alpha dx \right)^\frac{1}{\tau'} \\
\end{eqnarray*} and put $ \delta_i:= \int_{\Omega_i^c} |x|^\alpha u^q dx $ and note $ \delta_i \rightarrow 0$.  So we can now use this and (\ref{i_00}) to see that 
\[\frac{\E_{k,i}^{\tau'}}{i^{\tau'(2t_k-1)}} \le \delta_i^\frac{\tau'}{\tau} \frac{C_k}{i^{p+2t_k-2}},\] and note the exponents on $i$ are equal and hence we see that $ \E_{k,i} \rightarrow 0$ as $ i \rightarrow \infty$. 
 \hfill $\Box$

 \section{H\'enon  equation with a zero order term on $\IR^N$}  \label{henon_full_section}
 
 In this section we examine solutions of 
 \begin{equation} \label{henon_full}
 -\Delta u + u = |x|^\alpha u^{p-1} \qquad \mbox{ in } \IR^N= \IR^{n} \times \IR^n.
 \end{equation}  A particular interest will be in obtaining positive classical nonradial solutions. Before stating our main result we recall the definition of the best constant in Hardy inequality for  $\R^N$,  that is,
 \begin{equation} \label{hardyy00}
\beta_1=\inf_{u\in H_0^1(\R^N)}
\frac{\int_{\R^N} |\nabla u|^2 \, dx+ \int_{\R^N} u^2 \, dx}{\int_{\R^N} \frac{u^2}{|x|^2} \, dx}. \end{equation}
 
 Here is our main result in this section. 
 \begin{thm} \label{henon_full_thm}  Let  $N \ge 3$ be an even number and  $ \alpha>0.$  The following assertions hold:
 
 \begin{enumerate}   \item Suppose 
 \[ \frac{2N+2 \alpha-4}{N-2}<p< \frac{2N+2\alpha}{N-2}.\]
 Then there is a positive classical $K_+$ ground state solution $u $ of (\ref{henon_full}) (see below for a definition of $K_+$).     
 
 \item Suppose 
 \[\max \left\{\frac{4(N+2)}{\beta_1}+2, \frac{2N+2 \alpha-4}{N-2}\right\}  < p< \frac{2N+2\alpha}{N-2}. \]  Then the positive classical $K_+$ ground state solution $u$ of (\ref{henon_full}) is nonradial.  %Note for fixed $N$ and large $\alpha$ this is nonempty range of $p$. 

 \end{enumerate} 

  \end{thm}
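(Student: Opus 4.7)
The plan is to apply the abstract variational principle (Theorem~\ref{var-pri}) to $\Omega = \R^N$ with $\lambda = 1$ (necessary since $\R^N$ is unbounded), $a(x) = |x|^\alpha$, and the convex cone
\[K_+ = \{0 \leq u \in H^1_G(\R^N): u \text{ is even across } \theta = \pi/4 \text{ and } u_\theta \geq 0 \text{ on } 0 < \theta < \pi/4\},\]
with $G = O(n) \times O(n)$. The desired $K_+$-ground state follows as soon as the two hypotheses of Theorem~\ref{var-pri} are verified: the compact embedding $K_+ \subset\subset L^p(\R^N, |x|^\alpha\, dx)$ and the pointwise invariance $u \in K_+ \Rightarrow v \in K_+$ for $v$ solving $-\Delta v + v = |x|^\alpha u^{p-1}$.

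The pointwise invariance would follow the blueprint of Proposition~\ref{henon_prop_point} essentially verbatim. Truncating $u_k = \min\{u, k\}$, solving the linear equation, and passing to polar coordinates yields
\[-v^k_{rr} - \frac{N-1}{r}\, v^k_r - \frac{1}{r^2}\, v^k_{\theta\theta} + \frac{h(\theta)}{r^2}\, v^k_\theta + v^k = r^\alpha u_k^{p-1},\]
with $h$ as in \eqref{defn_h}. Since $m = n$ forces $h$ to be odd across $\pi/4$ and the right-hand side is even there, uniqueness (using $v^k \in H^1(\R^N)$ for decay at infinity) implies that $v^k$ inherits the symmetry across $\theta = \pi/4$. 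Differentiating in $\theta$ produces an operator of the form $-\Delta + 1 + (n-1)|x|^2/[(s^2)(t^2)]$, which satisfies the maximum principle. The Weth-type truncation argument on $\widetilde{\Omega}_0$, together with the traces of $v^k$ on $\theta = 0$ and $\theta = \pi/4$, yields $v^k_\theta \geq 0$. Passing $k \to \infty$ proceeds as in Proposition~\ref{pointwise_inv}, with the mass term providing the $H^1(\R^N)$ bound needed for weak compactness.

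The compactness is the main obstacle, and it is precisely what pins down the lower bound $(2N+2\alpha-4)/(N-2) < p$. On any annular shell $\{R_1 \leq |x| \leq R_2\}$ the iteration from Proposition~\ref{henon_imbed} combined with Corollary~\ref{imbed_+} delivers compact embedding into $L^p(|x|^\alpha\, dx)$ for every $p \leq (2N+2\alpha)/(N-2)$, so only the tail $\{|x| > R\}$ requires new work. For $u \in K_+$, the $\pi/4$-monotonicity concentrates mass on the diagonal and one establishes a Strauss-type pointwise bound $u(x) \leq C \|u\|_{H^1(\R^N)} |x|^{-(N-2)/2}$ for large $|x|$ by controlling $u$ along $\theta = \pi/4$ and using the monotonicity to dominate $u$ on the rest of the shell. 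Interpolating $u^p = u^2 \cdot u^{p-2}$ then gives
\[\int_{|x| > R} |x|^\alpha u^p\, dx \leq C\, \|u\|_{H^1}^{p-2} \int_{|x| > R} |x|^{\alpha - (N-2)(p-2)/2}\, u^2\, dx,\]
which tends to zero uniformly on $H^1$-bounded subsets of $K_+$ as $R \to \infty$ exactly when $\alpha - (N-2)(p-2)/2 < 0$, that is, $p > (2N+2\alpha-4)/(N-2)$. Combining tail smallness with the annular compactness yields $(i)$.

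With $(i)$ and $(ii)$ in hand, Theorem~\ref{var-pri} produces a nontrivial weak $K_+$-ground state $u$, which is positive by the strong maximum principle applied to the nonnegative equation. Regularity is obtained by the bootstrap of Theorem~\ref{non-radial-henon}: setting $q = (2N+2\alpha)/(N-2)$ and $t_{k+1} = (qt_k - (p-2))/2$ (with $t_k \to \infty$ since $p < q$), testing against truncated powers $u^{2t_k - 1} \wedge i^{2t_k - 1}$ and using the embedding at each step yields $|x|^\alpha u^{p-1} \in L^T(\R^N)$ for arbitrarily large $T$; Schauder theory then delivers a classical solution. The nonradial part is a direct application of Theorem~\ref{nonradial}: the hypothesis $p - 2 > 4(N+2)/\beta_1$ is exactly the Hardy-energy threshold that rules out radial ground states on $\R^N$.
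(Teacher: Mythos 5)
Your overall strategy matches the paper: verify hypotheses (i) and (ii) of Theorem~\ref{var-pri} for $K_+$ on $\R^N$, invoke the abstract theorem, then apply Theorem~\ref{nonradial} for nonradiality and bootstrap for regularity. The pointwise invariance step, the regularity iteration, and the nonradial conclusion all track what the paper does (the paper's Proposition~\ref{pointwise_inv_full} works on exhausting balls $B_i$ rather than truncating $u$ on all of $\R^N$, but the spirit is the same).

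The gap is in the compactness step. You assert a Strauss-type pointwise bound $u(x)\le C\|u\|_{H^1}|x|^{-(N-2)/2}$ for $u\in K_+$, and the lower threshold $(2N+2\alpha-4)/(N-2)<p$ is read off from interpolating $u^p=u^2u^{p-2}$ against this bound. But the bound is nowhere established, and it is not a routine Strauss estimate: for $u\in K_+$ the maximum on the sphere $|x|=r$ is the trace $M(r)=u(r,\pi/4)$, and the classical one-dimensional Strauss argument for $M$ requires controlling $M'(r)=u_r(r,\pi/4)$ --- a trace of $\nabla u$ along the diagonal that $H^1$ membership does not provide. The monotonicity in $\theta$ does give $\int_1^\infty M(r)^2\,r^{N-3}\,dr\lesssim\|u\|_{H^1}^2$, but passing from that integral estimate to the pointwise rate $r^{-(N-2)/2}$ needs a genuine argument (e.g.\ Lions' block-radial decay lemma), and even then the exponent is delicate. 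The paper avoids any pointwise decay claim entirely: Proposition~\ref{full_st} iterates the one-dimensional Gagliardo--Nirenberg-type inequality in $(s,t)$ to produce the two families of weighted integral bounds (\ref{first_star}) and (\ref{second_star}), and Corollary~\ref{comp_hen_ful} interpolates between them by H\"older to obtain both the continuous embedding on $\{|x|>1\}$ and the uniform tail smallness (via the $\varepsilon$-perturbation $\alpha\mapsto\alpha+\varepsilon$). That interpolation is precisely where the lower bound $(2N+2\alpha-4)/(N-2)<p$ comes from. Until the pointwise Strauss estimate for $K_+$ is actually proved, your compactness argument does not close, and you should either supply such a proof or follow the weighted-integral route of Proposition~\ref{full_st} and Corollary~\ref{comp_hen_ful}.
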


 Consider the full space $ \IR^N= \IR^m \times \IR^n$ (here $m$ and $n$ need not be equal but later we will set them equal)  
 \[ H^1_G(\IR^N):=\left\{ u \in H^1(\IR^N): gu =u \quad \forall g \in G\right\},\] where $gu(x)$ and $ G:=O(m) \times O(n)$ are as defined before.    We now take $ \widehat{\Omega}$ as before  and hence in this case we have $ \widehat{\Omega}$ is the first quadrant in the $(s,t)$ plane.  We define 
 \[ \widetilde{\Omega}:=\left\{(\theta,r): 0<r<\infty, 0<\theta<\frac{\pi}{2} \right\},\] and we take 
 \[ \widetilde{\Omega}_0:=\left\{(\theta,r): 0<r<\infty, 0<\theta<\frac{\pi}{4} \right\}.\]  We set $ K_+$ where the definition has the added modifications to $\IR^N$ that one would expect; so the functions are even across $ \theta =\frac{\pi}{4}$ and increasing in $\theta$ on $(0,\frac{\pi}{4})$.
 
 \begin{prop} \label{full_st} (Imbedding) For all integers $ k \ge 0$ there is some $C_k$ such that for all $ 0 \le \phi \in H^1_G(\IR^N)$ with $ \| \phi \|_{H^1} \le 1$ one has 
 \begin{equation} \label{first_star}
 \int_{\widehat{\Omega}} (\phi(s,t))^{2^*+2k} s^{(k+1) (m-1)} t^{(k+1)(n-1)} ds dt \le C_k,
 \end{equation}
  \begin{equation} \label{second_star}
 \int_{\widehat{\Omega}} (\phi(s,t))^{2(k+1)} s^{(k+1) (m-1)} t^{(k+1)(n-1)} ds dt \le C_k.
 \end{equation}
 \end{prop}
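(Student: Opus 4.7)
The plan is to mirror the iteration used in Proposition \ref{henon_imbed}, with the essential modification that the base case now invokes embeddings into $L^{2^*}(\IR^N)$ and $L^{2}(\IR^N)$ provided by the full $H^1(\IR^N)$ norm (rather than by a bounded-domain Poincar\'e estimate together with the gradient norm alone).

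First I would reduce to smooth compactly supported functions by density: any $\phi \in H^1_G(\IR^N)$ can be approximated by $G$-invariant functions in $C_c^\infty(\IR^N)$ (mollification preserves the $G$-symmetry). For such a $\phi$, extending by zero to the first quadrant and writing the fundamental theorem of calculus in each of the variables $s$ and $t$ gives, as in the bounded-domain proof,
\[
\phi(s,t)^2 \le \int_s^\infty |\nabla_{s,t}\phi(\tau_1,t)|\,d\tau_1 \int_t^\infty |\nabla_{s,t}\phi(s,\tau_2)|\,d\tau_2.
\]
Multiplying by $s^{2\beta_1} t^{2\beta_2}$, integrating over $\widehat\Omega$, substituting $\phi^\gamma$ for $\phi$ and applying Cauchy--Schwarz (exactly as in the proof of Proposition \ref{henon_imbed}) yields the iteration inequality
\[
\int_{\widehat\Omega} \phi^{2\gamma} s^{2\beta_1} t^{2\beta_2}\, ds\, dt \le \gamma^2 \|\nabla \phi\|_{L^2(\IR^N)}^2 \int_{\widehat\Omega} \phi^{2(\gamma-1)} s^{2\beta_1-(m-1)} t^{2\beta_2-(n-1)}\, ds\, dt.
\]
Since $\|\nabla\phi\|_{L^2} \le \|\phi\|_{H^1} \le 1$, this is the engine for both estimates.

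For (\ref{first_star}) I would take the same parameters as in Proposition \ref{henon_imbed}, namely $\gamma_k = \tfrac{2^*}{2}+k$, $\beta_1^k = \tfrac{(k+1)(m-1)}{2}$, $\beta_2^k = \tfrac{(k+1)(n-1)}{2}$, with base case $k=0$ supplied by the classical Sobolev embedding $H^1(\IR^N) \hookrightarrow L^{2^*}(\IR^N)$, which after translating back via $\int_{\IR^N}\phi^{2^*}dx = c(m,n)\int_{\widehat\Omega}\phi^{2^*}s^{m-1}t^{n-1}\,ds\,dt$ gives the starting bound. The inductive step then goes through verbatim.

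For (\ref{second_star}) I would instead use $\gamma_k = k+1$ (so the resulting exponent on $\phi$ is $2(k+1)$) with $\beta_1^k, \beta_2^k$ as above. The base case $k=0$ is just $\int_{\widehat\Omega}\phi^2 s^{m-1}t^{n-1}\,ds\,dt \le C\|\phi\|_{L^2(\IR^N)}^2 \le C$, which only uses the $L^2$ part of the $H^1$ norm. The inductive step is again immediate from the iteration inequality, because $2\gamma_k = 2(k+1)$ and $2(\gamma_k - 1) = 2k$ match up with the weights $2\beta_j^k - (n_j - 1) = 2\beta_j^{k-1}$. The only point requiring any care --- and the step I expect to be the subtlest --- is the density/approximation argument ensuring that these a priori bounds on $C_c^\infty$-approximants pass to the $H^1$-limit (Fatou's lemma for the weighted integrals on the left suffices, since the right-hand sides are uniformly bounded in terms of $\|\phi\|_{H^1}$).
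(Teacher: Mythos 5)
Your proposal is correct and follows essentially the same route as the paper: the same fundamental-theorem-of-calculus estimate, the same Cauchy--Schwarz iteration inequality, and the same inductive choices (with your $\gamma_k=k+1$ for (\ref{second_star}) simply making explicit what the paper leaves implicit when it says to choose parameters so the base case is $\int_{\widehat\Omega}\phi^2 s^{m-1}t^{n-1}\,ds\,dt$). The density/Fatou remark at the end is a sound way to justify the reduction, and the rest of the argument is sound as written.
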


 \begin{proof} Both results  will start with the same basic proof and they will follow by almost the same computation as  the proof of  Proposition  \ref{henon_imbed}.   By a density argument we can assume $ \phi \ge 0$ is smooth and zero for large enough $ r= (s^2+t^2)^\frac{1}{2}$ and we write  $ d \mu(s,t)= s^{m-1} t^{n-1} d s dt$.  Suppose 
\[ \int_{\widehat{\Omega}} \left( | \nabla_{s,t} \phi |^2 + \phi^2 \right) d \mu(s,t)   \le 1.\]  Let $ 0 \le u$ denote a function defined in $(s,t)$ and zero for large $(s,t)$   (we will take $u$ to be a power of $ \phi$). 
 As before we have 
 \[ u(s,t)^2\le \int_s^\infty | \nabla_{s,t} u( \tau_1, t)| d \tau_1 \int_t^\infty | \nabla_{s,t} u( s, \tau_2)| d \tau_2,\] 
   and we now multiply by sides by $ s^{2 \beta_1} t^{2 \beta_2}$ where $ \beta_i>0$ and integrate over $ \widehat{\Omega}$ we arrive at 
  \begin{equation} %\label{step_gen}
  \int_{\widehat{\Omega}} u(s,t)^2 s^{2\beta_1} t^{2\beta_2} ds dt \le \left(  \int_{\widehat{\Omega}} |\nabla_{s,t} u(s,t)| s^{\beta_1} t^{\beta_2} ds dt \right)^2.
  \end{equation}  We now suppose $ 0 \le \phi $ as above and take $u=\phi^\gamma$ and put into abouve ($\gamma \ge 1$).  
  Then we arrive at 
  \begin{eqnarray*}
  \int_{\widehat{\Omega}} \phi^{2\gamma} s^{2 \beta_1} t^{2 \beta_2} ds dt &\le &\gamma^2 \left( \int_{\widehat{\Omega}} \left\{| \nabla_{s,t} \phi| s^\frac{m-1}{2} t^\frac{n-1}{2} \right\} \left\{\phi^{\gamma-1} s^{\beta_1 - \frac{m-1}{2}} t^{\beta_2- \frac{n-1}{2}} \right\} ds dt \right)^2 \\
  &\le& \gamma^2 \| \nabla \phi \|_{L^2(\Omega)}^2 \int_{\widehat{\Omega}} \phi^{2(\gamma-1)} s^{2 \beta_1 - (m-1)} t^{2 \beta_2 - (n-1)} ds dt,
  \end{eqnarray*} where we performed the Cauchy–Schwarz inequality
 and recall $ \| \nabla \phi \|_{L^2} \le 1$.   \\
 
 We now perform the iterations.   For (\ref{first_star}) we will follow the exact same choice of parameters as in Proposition \ref{henon_imbed} and this gives the desired result.   Note in the first step here we choose the parameters so that the right hand side is exactly 
 \[ \int_{\widehat{\Omega}} \phi^{2^*} s^{m-1} t^{n-1} ds ds,\] which we know is controlled by the critical Sobolev imbedding.     \\

 \noindent  To prove (\ref{second_star}) the only difference is we choose the parameters so that in the first step of the iteration the right hand side is 
 \[ \int_{\widehat{\Omega}} \phi^2 s^{m-1} t^{n-1} ds dt.\] If one performs the iteration they get the desired result. 
 
\end{proof}

 \begin{coro} \label{comp_hen_ful} For $N \ge 3$ even, $ \alpha>0$ and 
 \begin{equation} \label{comp_imb_full}
  \frac{2N+2 \alpha-4}{N-2}<p< \frac{2N+2\alpha}{N-2},
  \end{equation} 
  we have $K_+ \subset \subset L^p( \IR^N, |x|^\alpha dx)$.  
 \end{coro}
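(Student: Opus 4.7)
The plan is to first prove the continuous embedding $K_+ \hookrightarrow L^p(\mathbb{R}^N, |x|^\alpha dx)$ by mimicking the strategy of Corollary \ref{imbed_+}, then upgrade to compact embedding by exploiting the strictness of the inequalities defining the admissible range of $p$.

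For the continuous embedding, given $\phi \in K_+$ with $\|\phi\|_{H^1} \leq 1$, I would use evenness across $\theta = \pi/4$ and the monotonicity of $\phi$ in $\theta$ on $(0, \pi/4)$ together with Jensen's inequality (as in the proof of Corollary \ref{imbed_+}) to reduce the task to bounding $\int_0^\infty \int_{3\pi/16}^{\pi/4} \phi^p \, r^{\alpha+N-1} \, d\theta \, dr$. On this angular region the factor $(\cos\theta \sin\theta)^{n-1}$ from the polar change of variables is bounded between positive constants, so the weighted estimates of Proposition \ref{full_st}, rewritten in polar coordinates, yield
\[
\int_0^\infty \int_{3\pi/16}^{\pi/4} \phi^{2^*+2k} \, r^{(k+1)(N-2)+1} \, d\theta \, dr \leq C_k, \qquad \int_0^\infty \int_{3\pi/16}^{\pi/4} \phi^{2(k+1)} \, r^{(k+1)(N-2)+1} \, d\theta \, dr \leq C_k,
\]
for every integer $k \geq 0$. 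These two families share the same $r$-weight but span different $\phi$-exponents, so standard $L^q$ interpolation with respect to the measure $r^{(k+1)(N-2)+1} d\theta dr$ gives control of $\int \phi^q \, r^{(k+1)(N-2)+1} \, d\theta \, dr$ for every $q \in [2(k+1),\, 2^*+2k]$.

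To match the target weight $r^{\alpha+N-1}$, observe that $(k+1)(N-2)+1 = \alpha+N-1$ exactly when $k = \alpha/(N-2)$. If this ratio is a nonnegative integer, the $q$-interval $[2(k+1),\, 2^*+2k]$ becomes $[(2N+2\alpha-4)/(N-2),\, (2N+2\alpha)/(N-2)]$, covering the full hypothesis. For non-integer $\alpha/(N-2)$, I would apply H\"older to interpolate across adjacent integers $k_1 = \lfloor \alpha/(N-2) \rfloor$ and $k_2 = k_1 + 1$: weights $a, b \geq 0$ with $a+b = 1$ and $a k_1 + b k_2 = \alpha/(N-2)$ simultaneously match the target weight and, by varying the $q$-exponents within each interval, produce every $p$ in the open range $\bigl((2N+2\alpha-4)/(N-2),\, (2N+2\alpha)/(N-2)\bigr)$ as a convex combination $a q_1 + b q_2$. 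This establishes the continuous embedding.

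For compactness, take a bounded sequence $\{\phi_j\} \subset K_+$ and extract a subsequence with $\phi_j \rightharpoonup \phi$ in $H^1$ and $\phi_j \to \phi$ a.e.; since $K_+$ is weakly closed, $\phi \in K_+$. By Vitali's convergence theorem it suffices to establish uniform integrability of $\{|\phi_j|^p |x|^\alpha\}$ and tightness at infinity. Since $p$ is strictly below the upper endpoint, the continuous embedding applies to some $p_+ = p + \epsilon$ with the same weight $|x|^\alpha$, and H\"older gives $\int_E |\phi_j|^p |x|^\alpha dx \leq C \bigl( \int_E |x|^\alpha dx \bigr)^{\epsilon/(p+\epsilon)}$, yielding uniform integrability on bounded sets. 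For tightness, choose small $\epsilon, \delta > 0$ so that $(p+\epsilon,\, \alpha+\delta)$ remains admissible and $\delta p > (N+\alpha)\epsilon$; such a choice exists because $p$ lies in the open admissible region. H\"older between the $(p+\epsilon, \alpha+\delta)$ estimate and a pure power of $|x|$ then yields $\int_{|x|>R} |\phi_j|^p |x|^\alpha dx \leq C R^{-\eta}$ for some $\eta > 0$, giving tightness.

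The main obstacle is the simultaneous weight- and exponent-matching in the third paragraph: one must verify that the convex combinations across adjacent integer values of $k$ exhaust precisely the open interval $\bigl((2N+2\alpha-4)/(N-2), (2N+2\alpha)/(N-2)\bigr)$. The lower endpoint $(2N+2\alpha-4)/(N-2)$ in the hypothesis arises from the $L^2$-type estimate (\ref{second_star}) of Proposition \ref{full_st}, which is the new ingredient compared to the bounded-domain setting of Corollary \ref{imbed_+}; without it one could not control $\phi$ at spatial infinity.
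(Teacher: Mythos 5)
Your argument is correct and rests on the same key ingredient, Proposition \ref{full_st}, but the interpolation scheme differs from the paper's in a substantive way. The paper splits the domain into $|x|\le 1$ (handled by invoking Corollary \ref{imbed_+} through a cutoff) and $|x|>1$, and on the exterior region performs a single H\"older step between the pure $L^2$ bound $(i=0)$ in (\ref{on_t}) and the weighted estimate (\ref{on_tt}) with $k$ large; the constraint $\tau_k'\alpha\le 2k(n-1)$ is only satisfied in the limit $k\to\infty$, which produces the asymptotic lower bound $\frac{\alpha}{2(n-1)}<\frac{p-2}{2}$. Your version avoids both the domain split and the limiting argument: by interpolating in $q$ within a fixed $k$ (between (\ref{first_star}) and (\ref{second_star}), which share the weight $r^{(k+1)(N-2)+1}$) and then convexly combining the two adjacent integers $k_1=\lfloor\alpha/(N-2)\rfloor$ and $k_2=k_1+1$, you hit the target weight $r^{\alpha+N-1}$ exactly, and the resulting range of achievable $p$ is precisely $\bigl[\frac{2N+2\alpha-4}{N-2},\frac{2N+2\alpha}{N-2}\bigr]$. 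This is cleaner in two respects: it handles all of $\IR^N$ uniformly, and the admissible range of $p$ drops out of a finite computation rather than a $k\to\infty$ argument. For compactness your use of Vitali and the two-parameter $(\varepsilon,\delta)$ H\"older step are fine, though heavier than needed: since only the upper bound $p<\frac{2N+2\alpha}{N-2}$ is strict with respect to $\alpha$, one can simply run the continuous embedding with $\alpha+\varepsilon$ in place of $\alpha$ and the same $p$, then extract the factor $|x|^{-\varepsilon}\le R^{-\varepsilon}$ on $\{|x|>R\}$, which is the paper's one-line tightness argument and avoids any condition of the form $\delta p>(N+\alpha)\varepsilon$.
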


 \begin{proof}   Let $ \phi \in K_+$.  Using a suitable compactly supported radial cut off function and Corollary \ref{imbed_+} we see that there is some $C$ (independent of $ \phi$) such that 
 \[ \left( \int_{B_1}  \phi(x)^p |x|^\alpha dx \right)^\frac{1}{p} \le C \| \phi \|_{H^1(\IR^N)},\] and hence we really only need to bound the integral on the region $|x| \ge 1$.  
 
 By using  Proposition  \ref{full_st} and similar arguments that we used to prove  Corollary  
 \ref{imbed_+} we can show  for all integers $k,i \ge 0$ there is some constant depending just on $k,i$  such that  for all $ \phi \in K_+$ with $ \| \phi \|_{H^1} \le 1$ one has 
 \begin{equation} \label{on_t} 
 \int_{\IR^N} \phi^{2+2i} |x|^{2i(n-1)} dx \le C_i,
 \end{equation} 
  \begin{equation} \label{on_tt} 
   \int_{\IR^N} \phi^{2^*+2k} |x|^{2k(n-1)} dx \le C_k.
   \end{equation}   Recall we really only need the estimate on the region $ \frac{\pi}{8}<\theta<\frac{\pi}{4}$  (where $ s$ and $t$ are comparable) and then we can extend to the full region via monotonicity and symmetry.     We now interpolate between these to get the desired result.    Again we fix $ \phi \in K_+$ with $ \| \phi \|_{H^1} \le 1$.   Then we have, for $ \tau>1$,

  \begin{eqnarray*}
\int_{|x|>1} \phi^p |x|^\alpha dx &=& \int_{|x|>1} \left\{ \phi^\frac{(2+2i)}{\tau} |x|^\frac{2i(n-1)}{\tau}   \right\}
\left( \phi^{p - \frac{(2 +2i)}{\tau}} |x|^{ \alpha - \frac{2i(n-1)}{\tau}} \right)  dx \\
& \le &\left(\int_{|x|>1} \phi^{2+2i} |x|^{2i(n-1)} dx \right)^\frac{1}{\tau} \left( \int_{|x|>1} \phi^{\tau'   \left( p - \frac{2+2i}{\tau} \right) } |x|^{ \tau' \left( \alpha - \frac{2i(n-1)}{\tau} \right)}  dx \right)^\frac{1}{\tau'}.
\end{eqnarray*} 

We now choose an appropriate $\tau$ and we will be more general than we need to.   Assume $i,k \ge 0$ are integers and we suppose $ 2+2i <p<2^* +2k$.     Take $ \tau>1$ such that 
\[ \tau' \left( p - \frac{2+2i}{\tau} \right) =2^*+2k,\] and then note we have an estimate provided \[ \tau' \left( \alpha- \frac{2i(n-1)}{\tau} \right) \le 2k(n-1),\] after considering 
(\ref{on_tt}).    Note on can explicitly compute $\tau$  from the first equation to get  
\[ \tau = \frac{2^* + 2k-2-2i}{2^*+2k-p}. \]   Now one needs to check if the second inequality holds.  
For our purposes it will be sufficient to take $ i=0$ and $k$ large.   So define $\tau_k$ by 
\[ \tau_k =  \frac{2^*+2k-2}{2^*+2k-p},  \] and so note that $ \tau_k \searrow 1$ as $ k \rightarrow \infty$.  So we need $ 2<p<2^*+2k$ and $ \tau_k' \alpha \le 2k(n-1)$ which we can rewrite as $ \frac{\alpha \tau_k}{2(n-1)} \le k (\tau_k-1)$  but note that 
\[ k ( \tau_k-1) = \frac{k(p-2)}{2^*+2k-p} \rightarrow \frac{p-2}{2},\] as $ k \rightarrow \infty$. So we see the desired result holds for  large integers $k$ provided  
\[ \frac{\alpha}{2(n-1)} < \frac{p-2}{2},\] which is exactly the lower bound on $p$ from (\ref{comp_imb_full}).   The above shows that for the desired range of parameters we have a continuous imbedding.  We now need to improve this to a compact imbedding.    Note the only potential loss of compactness is if we lose mass at $ \infty$.  Take $ \{ \phi_k \}_k \subset K_+$ with $ \| \phi_k \|_{H^1} \le 1$ and fix $ p,\alpha$ as in (\ref{comp_imb_full}) and then note by taking $ \E>0$ small enough we have $ p,\alpha_\E:=\alpha+\E$ still satisfies (\ref{comp_imb_full}).    Then from the above results we have for some $ \E>0$ that 
\[ \int_{|x|>1} \phi_k^p |x|^{\alpha+\E} dx \le C_\E,\] and hence for large $R$ we have 
\[ \int_{|x|>R} \phi_k^p |x|^\alpha dx \le \frac{C_\E}{R^\E},\] for all $k$ and this is sufficient to rule out a loss of compactness at $ \infty$.   
 \end{proof}

 We now turn to the pointwise invariance property.  We need to show that given $u \in K_+$ there is some $ v \in K_+$ which satisfies 
    \begin{equation} \label{linear_full}
-\Delta v +v =|x|^\alpha u^{p-1}  \mbox{ in } \IR^N, 
\end{equation}   
    
 \begin{prop} \label{pointwise_inv_full} (Pointwise invariance property) Suppose $2m=2n=N$ and $u \in K_+$.  Then there is some $v \in K_+$ which satisfies (\ref{linear_full}). 
 \end{prop}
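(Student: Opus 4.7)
The plan is to mimic the argument used for the annulus (Proposition \ref{pointwise_inv}) and the ball (Proposition \ref{henon_prop_point}), making the modifications needed for the full space. Let $u \in K_+$ and for a positive integer $k$ set $u_k(x) = \min\{u(x),k\} \chi_{B_k}(x)$, so that $u_k \in L^\infty(\IR^N) \cap K_+$ with compact support; in particular $|x|^\alpha u_k^{p-1}$ lies in $L^2(\IR^N) \cap L^\infty(\IR^N)$. Because the operator $-\Delta+1$ is coercive on $H^1(\IR^N)$, the linear problem
\[
-\Delta v^k + v^k = |x|^\alpha u_k^{p-1} \qquad \text{in } \IR^N
\]
has a unique weak solution $v^k \in H^1(\IR^N)$, and standard elliptic regularity gives $v^k \in C^{1,\delta}_{\mathrm{loc}}(\IR^N)$. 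Since the right-hand side is $G$-invariant, uniqueness forces $v^k$ to be $G$-invariant as well, so we may work with $v^k = v^k(s,t)$ and pass to polar coordinates as in (\ref{polar_k_hen}), which here becomes
\[
-v^k_{rr} - \frac{(N-1)v^k_r}{r} - \frac{v^k_{\theta\theta}}{r^2} + \frac{v^k_\theta}{r^2} h(\theta) + v^k = |x|^\alpha u_k^{p-1},
\]
with $h$ as in (\ref{defn_h}). Since $m=n$, the function $h$ is odd across $\theta=\pi/4$ while the right-hand side is even across $\theta=\pi/4$ (because $u \in K_+$). Setting $\widehat{v}(r,\theta) := v^k(r,\pi/2-\theta)$, one checks $\widehat{v}$ solves the same equation in $\widetilde{\Omega}$, and then $W := v^k - \widehat{v}$ is $H^1$ and satisfies $-\Delta W + W = 0$ in $\IR^N$, hence $W \equiv 0$. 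Thus $v^k$ is even across $\theta = \pi/4$ and $v^k_\theta(r,\pi/4)=0$ for all $r>0$.

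Next I establish monotonicity $v^k_\theta \ge 0$ on $\widetilde{\Omega}_0$. Setting $w := v^k_\theta$ and differentiating the polar equation in $\theta$ gives
\[
-w_{rr} - \frac{(N-1)w_r}{r} - \frac{w_{\theta\theta}}{r^2} + \frac{w_\theta}{r^2} h(\theta) + \frac{w}{r^2} h'(\theta) + w = \partial_\theta\bigl(|x|^\alpha u_k^{p-1}\bigr) \ge 0 \qquad \text{in } \widetilde{\Omega}_0,
\]
with $w=0$ on $\theta=0$ and $\theta=\pi/4$ (the latter by the reflection symmetry just proved, the former by the $G$-invariance and smoothness of $v^k$ away from the origin). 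Rewriting the left-hand side in terms of $x$ as in the ball case, the top-order operator is
\[
-\Delta w + \frac{(n-1)|x|^2}{(x_1^2+\cdots+x_m^2)(x_{m+1}^2+\cdots+x_N^2)}\, w + w,
\]
which admits a maximum principle. Testing a suitable weak formulation against $(-(w-\varepsilon)_-)$ following the Weth-type argument from \cite{Weth_annulus} used in Propositions \ref{pointwise_inv} and \ref{henon_prop_point}, and then sending $\varepsilon \downarrow 0$, yields $w \ge 0$ in $\widetilde{\Omega}_0$. Consequently $v^k \in K_+$.

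Finally, I pass to the limit $k \to \infty$. Testing the equation for $v^k$ against $v^k$ and applying H\"older and the continuous imbedding $K_+ \subset L^p(\IR^N,|x|^\alpha dx)$ from Corollary \ref{comp_hen_ful} gives
\[
\|v^k\|_{H^1(\IR^N)}^2 \le C \,\|u_k\|_{L^p_{|x|^\alpha}}^{p-1} \|v^k\|_{L^p_{|x|^\alpha}} \le C' \|u\|_{L^p_{|x|^\alpha}}^{p-1} \|v^k\|_{H^1(\IR^N)},
\]
so $\{v^k\}$ is bounded in $H^1(\IR^N)$. Passing to a subsequence, $v^k \rightharpoonup v$ in $H^1(\IR^N)$ and $v^k \to v$ a.e.; the compact imbedding in Corollary \ref{comp_hen_ful} lets one pass to the limit in the right-hand side, so $v$ is a weak solution of (\ref{linear_full}). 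The closedness of $K_+$ under weak $H^1$ convergence (monotonicity in $\theta$ and reflection symmetry across $\theta=\pi/4$ pass to weak limits via testing with $\eta_\theta$ for $\eta \in C_c^\infty(\widetilde{\Omega}_0)$, exactly as at the end of the proof of Proposition \ref{pointwise_inv}) then yields $v \in K_+$.

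The main obstacle is the rigorous application of the maximum principle for $w$ on the unbounded sector $\widetilde{\Omega}_0$: the Hardy-type singularities of the zero-order coefficient along the coordinate axes and at the origin, together with the fact that $\widetilde{\Omega}_0$ is unbounded, prevent a direct application of classical comparison. The remedy, as in the ball case, is the Weth-style truncation $(w-\varepsilon)_-$ together with the H\"older regularity of $w$ near the singular set; the $+v$ term is in fact helpful here since it provides the coercivity needed without recourse to the shape of the domain, which is what makes the full-space case only a minor modification of the ball case.
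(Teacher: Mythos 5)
Your proposal is correct, but it takes a genuinely different route from the paper. The paper approximates the \emph{domain}: it solves $-\Delta v + v = |x|^\alpha u^{p-1}$ on the expanding balls $B_i$ with Dirichlet data, invokes the ball result (Proposition \ref{henon_prop_point}, suitably adapted to include the $+v$ term) to get $v_i \in K_+(B_i)$ for each $i$, extends by zero, derives a uniform $H^1(\IR^N)$ bound from the same Hölder/imbedding computation you use, and passes to the weak limit in $i$. You instead keep the domain fixed at $\IR^N$ and approximate the \emph{datum} by $u_k = \min\{u,k\}\chi_{B_k}$, solving the full-space linear problem directly for each $k$. The advantage of the paper's route is that the symmetry and monotonicity machinery (the argument near the singular set and the maximum principle) are reused verbatim from the bounded setting, so there is nothing new to check at infinity. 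Your route avoids the slightly vague "same proof as Proposition \ref{henon_prop_point}" invocation and replaces it with a self-contained full-space argument, but as you correctly flag, this requires justifying the maximum principle on the unbounded sector $\widetilde{\Omega}_0$. Your resolution — that the compact support of $|x|^\alpha u_k^{p-1}$ gives exponential decay of $v^k$ and $\nabla v^k$, which together with the coercive $+v$ term controls the test-function integration by parts near infinity — is sound, and is the extra ingredient that the paper sidesteps by working on $B_i$. A bonus of your approach worth keeping: your uniqueness argument for the reflection symmetry ($\widehat v$ is an $H^1$ weak solution with the same $G$-invariant RHS, hence $\widehat v = v^k$) is cleaner than the paper's distributional/singular-set argument and avoids the box-counting lemma entirely.
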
 
 
 \begin{proof} Let $u \in K_+$ and for integers $i \ge 1$ consider the problem
 \begin{equation} \label{eq_hen_pointwise_full_approx}
\left\{\begin{array}{ll}
-\Delta v +v= |x|^\alpha u^{p-1} &  \mbox{ in } B_i, \\
v= 0 &   \mbox{ on } \partial B_i.
\end {array}\right.
\end{equation}  Using the same proof as in Proposition  \ref{henon_prop_point}
we can show there is some $ v_i \in K_+(B_i)$, where $K_+(B_i)$ is the obvious extension of $K_+$ from the unit ball to the ball of radius $i$, which satisfies (\ref{eq_hen_pointwise_full_approx}).    Extend $v_i$ to be zero outside $B_i$.  Then note by multiplying the equation for $v_i$ and integrating over $B_i$ we obtain  
\begin{eqnarray*}
\int_{B_i} | \nabla v_i|^2+ v_i^2 dx &=& \int_{B_i} |x|^\alpha u^{p-1} v_i dx \\ 
&=& \int_{B_i} \left\{ u^{p-1} |x|^\frac{\alpha}{p'} \right\} \left\{ v_i |x|^\frac{\alpha}{p} \right\} dx \\ 
& \le & \left( \int_{B_i} u^{p'(p-1)} |x|^\alpha dx \right)^\frac{1}{p'} \left( \int_{B_i} v_i^p |x|^\alpha dx \right)^\frac{1}{p}
\end{eqnarray*} and then note $ p'(p-1)=p$. Using the imbedding from Corollary \ref{comp_hen_ful}  there is some $C$ such that we have 
\[ \int_{\IR^N} | \nabla v_i|^2 + v_i^2 dx \le C \| u \|_{H^1(\IR^N)}^{p-1} \| v_i \|_{H^1(\IR^N)},\] and this shows that $\{v_i \}_i$ is bounded in $H^1(\IR^N)$.   By passing to a sequence we can assume there is some $ v \in H^1(\IR^N)$ with $ v_i \rightharpoonup v$ in $H^1(\IR^N)$ and $ v$ is an $H^1(\IR^N)$ energy solution of (\ref{linear_full}).   Furthermore we can use arguments similar to before to show that $ v \in K_+$, we omit the details.

 \end{proof}

 \noindent
 \textbf{Proof of Theorem \ref{henon_full_thm}.}
   Here again, we are going to use Theorem  \ref{var-pri} for the proof. 
   Note that conditions $(i)$ and $(ii)$ in Theorem \ref{var-pri} follows from Corollary  \ref{comp_hen_ful} and Proposition \ref{pointwise_inv_full} respectively. This proves the existence of a weak solution $u$ of (\ref{henon_full}).   It also follows from Theorem \ref{nonradial}
   that for 
   \[ \frac{4(N+2)}{\beta_1} < p-2 ,\]
   the ground state solution $ u $ obtained above is non-radial. Here $\beta_1$  is the best constant for the Hardy inequality on $\R^N$ defined in (\ref{hardyy00}).
   Thus,  our solution $u$ is non-radial provided 
  \[\max \left\{\frac{4(N+2)}{\beta_1}+2, \frac{2N+2 \alpha-4}{N-2}\right\}  < p< \frac{2N+2\alpha}{N-2}. \]

\noindent
{\it Regularity of the solution.}  Here we can use a proof similar to the proof of  Theorem \ref{non-radial-henon} but one needs to insert a suitable cut off function.   We omit the details. 
\hfill $\Box$

 \section{A singular potential problem} \label{singular_sect}
 
 Here we examine the problem  
 \begin{equation} \label{eq_sing_pot}
\left\{\begin{array}{ll}
-\Delta u + \frac{u}{|x|^\alpha}  = u^{p-1} &  \mbox{ in } B_1, \\
u= 0 &   \mbox{ on } \partial B_1,
\end {array}\right.
\end{equation} where $ N \ge 3$ and $ \alpha>2$.  In particular we are interested in nonradial positive classical solutions.  Note that we are taking $ \alpha>2$ which can be thought of as super critical values of $ \alpha$.  
Let $H$ denote the completion of the $ \{ u \in C^\infty_c(B_1 \backslash \{0\}): u = u(s,t)\}$ under the norm 
 \[ \|u\|_H^2:=\int_\Omega | \nabla u|^2 + \frac{u^2}{|x|^\alpha} dx.\]   Note if $ \alpha \ge N$  then $H$ does not contain $C_c^\infty(B_1)$ and hence we need to be a bit careful when we define what we mean by a  solution.   
 
\begin{definition}\label{weaksingu} We  call $u $ a weak $H$ solution of 
  \begin{equation} \label{sing_lin_weak}
\left\{\begin{array}{ll}
-\Delta u + \frac{u}{|x|^\alpha}  = f(x) &  \mbox{ in } B_1, \\
u= 0 &   \mbox{ on } \partial B_1,
\end {array}\right.
\end{equation} provided $ u \in H$ and 
\begin{equation} \label{weak_eq_def}
\int_{B_1} \left( \nabla u \cdot \nabla \phi + \frac{ u \phi}{|x|^\alpha} \right) dx = \int_{B_1} f(x) \phi dx \quad \forall \phi \in H.
\end{equation}  %\textbf{((WE NEED TO THINK MORE ABOUT THIS DEFINITION.... AND WHAT WE WANT A SOLUTION OF THE NONLINEAR PROBLEM TO MEAN...))} 
 
\end{definition}

 We will assume that we are in the case of $m=n$ since we will want to work on a suitable version of $K_+$ which we now define.    We define $K_+$ to be exactly analogous to the way it was defined for the H\'enon problem on the ball except now we add the extra condition that $ u \in H$.

 \begin{thm} \label{sing_theorem}    Suppose $m=n$ and consider the problem (\ref{eq_sing_pot}). The following assertions hold;
 \begin{enumerate}
     \item Suppose $ 2<p< \frac{2N+2\alpha-4}{N-2}$, then there is a positive classical  $K_+$ ground state solution of (\ref{eq_sing_pot}).  In addition for all $t>0$ there is some $C_t$ such that $u(x) \le C_t |x|^t$ in $B_1$. 
     
     \item The ground state solution from part 1 is nonradial provided 
      \[p-2 > 4(N+2)/\beta_\alpha(\Omega),\] where 
     \[ \beta_\alpha:= \inf_{0\not= \phi \in H}   \frac{  \int_{B_1} | \nabla \phi|^2 + \frac{ \phi^2}{|x|^\alpha} dx }{  \int_{B_1} \frac{\phi^2}{|x|^2} dx}.\]   We will show $ \beta_\alpha \rightarrow \infty$ as $ \alpha \rightarrow \infty$ and hence this result is nonempty. 
 \end{enumerate}

 \end{thm}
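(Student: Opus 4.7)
The plan is to apply the abstract variational principle Theorem \ref{var-pri}, adapted to the space $H$ in place of $H_0^1(B_1)$ (which matters when $\alpha \geq N$), with $a \equiv 1$ and the convex cone $K_+$. The two hypotheses to verify are the compact embedding $K_+ \subset\subset L^p(B_1)$ and the pointwise invariance. For the embedding, the key observation is that the singular potential contributes a Hardy-type control that, interpolated with Corollary \ref{imbed_+}, recovers precisely the exponent $(2N+2\alpha-4)/(N-2)$. Concretely, for $u \in K_+$ and any $a \in (0,1)$ I would write
\begin{equation*}
\int_{B_1} u^p\, dx \;=\; \int_{B_1} \bigl(u^2/|x|^\alpha\bigr)^a \,\bigl(u^{p-2a}|x|^{\alpha a}\bigr)\, dx
\end{equation*}
and apply H\"older with conjugate exponents $(1/a, 1/(1-a))$. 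The first factor is dominated by $\|u\|_H^{2a}$, while the second, after simplification, is an integral of $u^{(p-2a)/(1-a)}$ against the weight $|x|^{\alpha a/(1-a)}$, which is controlled by Corollary \ref{imbed_+} exactly when $(p-2a)/(1-a) \leq (2N+2\alpha a/(1-a))/(N-2)$. A short computation shows this is solvable for some $a \in (0,1)$ if and only if $p < (2N+2\alpha-4)/(N-2)$. Compactness comes by taking the Corollary \ref{imbed_+} step at a strictly subcritical exponent.

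For pointwise invariance, given $u \in K_+$ I would truncate $u_k=\min\{u,k\}$, regularize $|x|^{-\alpha}$ to $(|x|+\varepsilon)^{-\alpha}$ if needed, and solve the resulting linear problem for $v^{k,\varepsilon}$. The symmetry across $\theta=\tfrac{\pi}{4}$ and the sign of $v^{k,\varepsilon}_\theta$ in $\widetilde{\Omega}_0$ follow by the same reflection and differentiated-equation argument as in Propositions \ref{pointwise_inv} and \ref{henon_prop_point}: differentiating the equation in $\theta$ produces a Hardy-type zero-order term, and the maximum principle in $\widetilde{\Omega}_0$ (justified by Weth's truncation of $(w-\varepsilon)_+$) delivers $v^{k,\varepsilon}_\theta\geq 0$. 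Passing $\varepsilon\to 0$ and $k\to\infty$ uses the compact embedding from step one and energy bounds obtained by testing the linear equation on $v^{k,\varepsilon}$ itself.

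With (i) and (ii) in hand, Theorem \ref{var-pri} yields a weak $K_+$ ground state $u$. Regularity follows by the Moser-type iteration used in the proof of Theorem \ref{non-radial-henon}: the inductive step $u^{t_k}\in K_+\Rightarrow u^{t_{k+1}}\in K_+$ propagates, with the added zero-order singular term only contributing a nonnegative quantity that can be dropped, giving $u\in L^q(B_1)$ for every $q<\infty$, hence H\"older continuity and classicality by Schauder theory. The infinite-order vanishing $u(x)\leq C_t|x|^t$ is obtained by a barrier bootstrap: if $u\leq C|x|^s$ near the origin, then $u^{p-1}\leq C^{p-1}|x|^{s(p-1)}$, and since $\alpha>2$ the function $C'|x|^{s'}$ is a supersolution of $-\Delta v + v/|x|^\alpha \geq C^{p-1}|x|^{s(p-1)}$ for a suitable $s'>s$, because $|x|^{s'-\alpha}$ dominates both $|x|^{s'-2}$ and $|x|^{s(p-1)}$ near $0$. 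Iterating raises $s$ without bound. Finally, nonradiality follows from Theorem \ref{nonradial} applied with $\beta_\alpha$ in place of $\beta_0$, and the divergence $\beta_\alpha\to\infty$ as $\alpha\to\infty$ comes from the fact that admissible test functions must essentially vanish on $\{|x|<\delta_\alpha\}$ with $\delta_\alpha\to 0$ slowly enough that $\int u^2/|x|^2\leq \delta_\alpha^{-2}\int u^2$ is overwhelmed by $\int |\nabla u|^2 + u^2/|x|^\alpha$ (a rescaling argument around the origin makes this quantitative).

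The main obstacle I anticipate is the infinite-order decay $u\leq C_t|x|^t$: the barrier iteration must be set up so that the constants $C_t$ remain finite at each step, and the exponent must actually increase (not just stay bounded), which requires the right quantitative balance between the $|x|^{-\alpha}$ absorbing term and the polynomial growth of $u^{p-1}$. A secondary subtlety is that when $\alpha\geq N$ the space $H$ does not contain $C_c^\infty(B_1)$, so one must work with the weak formulation from Definition \ref{weaksingu} throughout and verify that the abstract machinery of Theorem \ref{var-pri} survives this change; the only substantive point is that the variational structure and the compact embedding still occur in a Hilbert space, so the mountain pass argument and Szulkin's theorem apply verbatim.
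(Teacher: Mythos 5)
Your overall strategy matches the paper's: Theorem~\ref{var-pri} adapted to $H$, with the compact embedding obtained by interpolating the Hardy-type term against Corollary~\ref{imbed_+} (your substitution $a=1/\tau$ reproduces the computation in Lemma~\ref{compsingu} verbatim), pointwise invariance via truncation and the maximum-principle argument of Propositions~\ref{pointwise_inv} and \ref{henon_prop_point}, Moser iteration for boundedness, and nonradiality via Theorem~\ref{nonradial} with $\beta_\alpha$ in place of $\beta_0$. Your regularization $(|x|+\varepsilon)^{-\alpha}$ is a minor variant of the paper's annulus truncation $A_\varepsilon=\{\varepsilon<|x|<1\}$; both avoid the singularity at the origin and pass to the limit the same way.

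Where you diverge is the infinite-order vanishing $u(x)\leq C_t|x|^t$. The paper establishes this by proving a full linear theory in weighted $L^\infty$ spaces (Proposition~\ref{weight_linear}), including a uniqueness statement via spherical harmonics, and then iterates the estimate $\|\phi\|_{X_t}\lesssim\|f\|_{Y_t}$ starting from $u^{p-1}\in Y_t$ for $t$ slightly negative. Your barrier bootstrap is a legitimate alternative and produces the same exponent recursion $s_{k+1}=(p-1)s_k+\alpha$, but as written it has a genuine gap: the comparison $W=C'|x|^{s'}-u\geq 0$ on a small punctured ball requires controlling $W$ as $x\to 0$, and for the base case ($u$ merely bounded) you do not know $u(0)=0$, so the barrier and $u$ may fail to be ordered at the origin. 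This can be repaired (for instance by using the perturbed barrier $C'|x|^{s'}+\varepsilon$ with $s'<\alpha$ to dominate a bounded right-hand side on $\{|x|<\delta\}$, applying the interior-minimum argument, and letting $\varepsilon\searrow 0$), but one must also verify continuity of $u$ up to the origin so that the interior minimum occurs away from $0$; the paper's route through Proposition~\ref{weight_linear} handles both issues explicitly, and indeed uses the same kind of power barriers internally, only on annuli where the origin is excluded.

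Your sketch for $\beta_\alpha\to\infty$ is not a proof. Functions in $H$ need not vanish on any neighborhood of the origin (only $\int u^2/|x|^\alpha<\infty$ is required), so the claim that $\int u^2/|x|^2\leq\delta_\alpha^{-2}\int u^2$ is false for general $u\in H$, and the rescaling heuristic is not quantified. The paper's Lemma~\ref{betalpha} proves this cleanly via the boundary Hardy inequality $\int|\nabla\phi|^2\geq\tfrac14\int\phi^2/(1-|x|)^2$, absorbing the Hardy-type quotient pointwise by the explicit function $H_\alpha(r)=r^2(1/(4(1-r)^2)+1/r^\alpha)$ whose minimum tends to infinity with $\alpha$. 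You would either need to reproduce something of that nature or cite the lemma.
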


 \begin{remark}  We are able to prove similar results for nonradial domains  provided they are  domains of double  revolution symmetry with the $ \pi/2$ or $ \pi/4$ symmetry and the needed monotonicity. In these cases one works on a suitable version of $K_-$ but we chose not to include these results since the imbeddings we are able to prove appear to be nonoptimal. 
 \end{remark}

\begin{lemma}\label{betalpha}  We have $ \lim_{\alpha \rightarrow \infty} \beta_\alpha=\infty$. 

\end{lemma}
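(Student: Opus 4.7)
The plan is to argue by contradiction. Suppose $\beta_\alpha$ does not tend to $+\infty$; then there exist $\alpha_k \to \infty$ and a constant $M>0$ with $\beta_{\alpha_k} \le M$ for every $k$. I would choose near-minimizers $\phi_k \in H$ (for the weight $|x|^{-\alpha_k}$), normalized by
\[
\int_{B_1} \phi_k^2/|x|^2\, dx = 1 \quad \text{and} \quad \int_{B_1} \bigl(|\nabla \phi_k|^2 + \phi_k^2/|x|^{\alpha_k}\bigr) dx \le 2M,
\]
and aim to show $\int_{B_1} \phi_k^2/|x|^2\,dx \to 0$, which contradicts the normalization.

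Since $|x|^{-\alpha_k} \ge 1$ on $B_1$, the energy bound gives $\|\nabla \phi_k\|_{L^2}^2 \le 2M$ and $\|\phi_k\|_{L^2}^2 \le \int \phi_k^2/|x|^{\alpha_k}\,dx \le 2M$. Hence $\{\phi_k\}$ is bounded in $H_0^1(B_1)$. Passing to a subsequence, $\phi_k \rightharpoonup \phi$ weakly in $H_0^1(B_1)$, $\phi_k \to \phi$ strongly in $L^2(B_1)$ by Rellich, and $\phi_k \to \phi$ a.e.\ in $B_1$.

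The heart of the argument is to identify $\phi$. For every $x$ with $0<|x|<1$ one has $|x|^{-\alpha_k}\to +\infty$, so at every such point where $\phi(x)\neq 0$, $\liminf_k \phi_k^2(x)\,|x|^{-\alpha_k}=+\infty$. Applying Fatou's lemma to the non-negative sequence $\phi_k^2/|x|^{\alpha_k}$ gives
\[
\int_{B_1}\liminf_k \bigl(\phi_k^2/|x|^{\alpha_k}\bigr) dx \le \liminf_k \int_{B_1}\phi_k^2/|x|^{\alpha_k}\,dx \le 2M,
\]
which forces $\{\phi \neq 0\}$ to have Lebesgue measure zero. Thus $\phi \equiv 0$ a.e.\ in $B_1$, and the Rellich convergence sharpens to $\phi_k \to 0$ strongly in $L^2(B_1)$.

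To close the argument I would fix $\delta\in(0,1)$ and split
\[
\int_{B_1}\phi_k^2/|x|^2\,dx \;\le\; \delta^{\alpha_k-2}\int_{|x|<\delta}\phi_k^2/|x|^{\alpha_k}\,dx \;+\; \delta^{-2}\int_{|x|>\delta}\phi_k^2\,dx \;\le\; \delta^{\alpha_k-2}\cdot 2M \;+\; \delta^{-2}\|\phi_k\|_{L^2}^2,
\]
where on $\{|x|<\delta\}$ the key inequality $|x|^{-2}=|x|^{\alpha_k-2}|x|^{-\alpha_k}\le \delta^{\alpha_k-2}|x|^{-\alpha_k}$ is used. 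With $\delta<1$ fixed, the first term tends to $0$ because $\alpha_k\to \infty$, and the second tends to $0$ because $\phi_k\to 0$ in $L^2(B_1)$, producing $\int \phi_k^2/|x|^2\,dx\to 0$, contradicting the normalization. The delicate step is really the Fatou argument: it is where the growth of $\alpha_k$ is genuinely exploited to force the mass of the weak limit onto the null set $\{|x|=1\}$. Once that concentration statement is in hand, the splitting at any $\delta\in(0,1)$ is just bookkeeping.
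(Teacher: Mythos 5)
Your proof is correct, but it proceeds by a genuinely different route from the paper's. The paper argues directly: it invokes the boundary Hardy inequality $\int_{B_1}|\nabla\phi|^2\,dx \ge \tfrac{1}{4}\int_{B_1}\phi^2/(1-|x|)^2\,dx$, observes that $H_\alpha(r)=r^2\bigl(\tfrac{1}{4(1-r)^2}+r^{-\alpha}\bigr)$ satisfies $C_\alpha:=\min_{0<r<1}H_\alpha(r)\to\infty$, and converts this into the pointwise inequality $\tfrac{1}{4(1-|x|)^2}+\tfrac{1}{|x|^\alpha}\ge \tfrac{C_\alpha}{|x|^2}$, giving the explicit lower bound $\beta_\alpha\ge C_\alpha$. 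You instead argue by contradiction and compactness: since $|x|^{-\alpha_k}\ge 1$ on $B_1$, a sequence of near-minimizers is bounded in $H_0^1(B_1)$; after extracting a weak limit, Fatou applied to $\phi_k^2/|x|^{\alpha_k}$ forces the limit to vanish a.e.\ (because $|x|^{-\alpha_k}\to\infty$ pointwise on the punctured ball), and then the split of the Hardy quotient at $|x|=\delta$ together with the elementary bound $|x|^{-2}\le\delta^{\alpha_k-2}|x|^{-\alpha_k}$ on $\{|x|<\delta\}$ finishes the job. Both arguments are sound. The paper's direct approach avoids any weak-limit extraction and hands you a quantitative lower bound $C_\alpha$; yours is softer and yields only the limit statement, but it entirely sidesteps the boundary Hardy inequality, relying instead on Rellich compactness and Fatou as the only structural inputs.
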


\begin{proof} Recall the boundary Hardy inequality gives 
  \[ \int_{B_1} | \nabla \phi|^2 dx \ge \frac{1}{4} \int_{B_1} \frac{ \phi^2}{ (1-|x|)^2} dx \quad \forall \phi \in H_0^1(B_1).\]  Define 
  \[ H_\alpha(r):=    r^2 \left(  \frac{1}{4(1-r)^2} + \frac{1}{r^\alpha} \right) \quad 0<r<1,\] and we set $ C_\alpha=\min_{0<r<1} H_\alpha(r)$ and note $ C_\alpha \rightarrow \infty$ as $ \alpha \rightarrow \infty$.   Then note we have 
  \begin{eqnarray*}
  \int_{B_1} | \nabla \phi|^2 dx + \int_{B_1} \frac{\phi^2}{|x|^\alpha} dx - C_\alpha \int_{B_1} \frac{ \phi^2}{|x|^2} dx & \ge & \int_{B_1} \left(   \frac{1}{4(1-|x|)^2} + \frac{1}{|x|^\alpha} - \frac{C_\alpha}{|x|^2} \right) \phi^2 dx \\
  &=& \int_{B_1} \frac{\phi^2}{|x|^2} \left( \frac{|x|^2}{4(1-|x|)^2} + \frac{|x|^2}{|x|^\alpha} - C_\alpha \right) dx \\
  &=& \int_{B_1} \frac{\phi^2}{|x|^2} \left( H_\alpha (|x|)-C_\alpha \right) dx \\
  & \ge &0,
  \end{eqnarray*} and from this we see that $\beta_\alpha \ge C_\alpha$ which proves the desired result.

\end{proof}

 \begin{lemma}\label{compsingu} Suppose  $m=n$ and  $ 1 \le p < \frac{2N+2\alpha-4}{N-2}$. Then  $K_+ \subset \subset L^p(B_1)$. 
 
 \end{lemma}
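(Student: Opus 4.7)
The plan is to interpolate between the H\'enon-type embedding of Corollary \ref{imbed_+} and the built-in control $\int_{B_1}\phi^2/|x|^\alpha\,dx\le\|\phi\|_H^2$ coming from membership in $K_+$. These two estimates are complementary: the first is uninformative near the origin (where positive powers of $|x|$ degenerate), while the second controls precisely that region. H\"older's inequality will glue them into an unweighted $L^q$ bound for every $q<(2N+2\alpha-4)/(N-2)$, after which compactness follows from uniform higher integrability together with Vitali's convergence theorem.

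For the interpolation, fix $q$ with $2\le q<(2N+2\alpha-4)/(N-2)$; the case $1\le q<2$ is covered by the standard Rellich--Kondrachov theorem since $2<2^*$. Choose
\[
\theta \in \left(\max\Bigl\{0,\,\tfrac{q(N-2)-2N}{2(\alpha-2)}\Bigr\},\,1\right),
\]
which is nonempty exactly because $\alpha>2$ and $q<(2N+2\alpha-4)/(N-2)$. Set $\tilde q:=(q-2\theta)/(1-\theta)$ and $\tilde\alpha:=\alpha\theta/(1-\theta)$; a direct calculation shows the lower bound on $\theta$ is equivalent to $\tilde q\le(2N+2\tilde\alpha)/(N-2)$, so $(\tilde q,\tilde\alpha)$ is admissible for Corollary \ref{imbed_+}. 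For $\phi\in K_+$ I would factor
\[
\phi^q \;=\; \left(\frac{\phi^2}{|x|^\alpha}\right)^{\!\theta}\cdot \bigl(\phi^{\tilde q}\,|x|^{\tilde\alpha}\bigr)^{1-\theta},
\]
the exponents on $\phi$ and $|x|$ matching by construction, and apply H\"older with conjugate exponents $1/\theta$ and $1/(1-\theta)$ to obtain
\[
\int_{B_1}\phi^q\,dx \;\le\; \left(\int_{B_1}\frac{\phi^2}{|x|^\alpha}\,dx\right)^{\!\theta}\left(\int_{B_1}\phi^{\tilde q}\,|x|^{\tilde\alpha}\,dx\right)^{\!1-\theta}.
\]
The first factor is bounded by $\|\phi\|_H^{2\theta}$, and the second by a power of $\|\nabla\phi\|_{L^2}$ via Corollary \ref{imbed_+}. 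This establishes a continuous embedding $K_+\hookrightarrow L^q(B_1)$ for every $q<(2N+2\alpha-4)/(N-2)$.

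To upgrade to compactness for the given $p$, take a bounded sequence $\{\phi_n\}\subset K_+$. Its $H$-boundedness forces $H^1_0(B_1)$-boundedness, so, after passing to a subsequence, $\phi_n\rightharpoonup\phi$ weakly in $H^1_0(B_1)$ and $\phi_n\to\phi$ a.e.\ in $B_1$. Pick $p_0$ with $p<p_0<(2N+2\alpha-4)/(N-2)$; the continuous embedding just proved applied with $q=p_0$ gives $\sup_n\int_{B_1}\phi_n^{p_0}\,dx<\infty$. Since $p_0>p$, the family $\{\phi_n^p\}$ is uniformly integrable on $B_1$, so Vitali's theorem combined with the a.e.\ convergence yields $\phi_n\to\phi$ strongly in $L^p(B_1)$.

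The main obstacle is simply the exponent bookkeeping in the interpolation: it is exactly the hypothesis $\alpha>2$ that forces $\tfrac{q(N-2)-2N}{2(\alpha-2)}<1$ precisely on the interval $q<(2N+2\alpha-4)/(N-2)$, so this upper bound on $p$ is the sharp ceiling reachable by interpolating the two available controls. Everything else is routine H\"older and Vitali manipulation; no additional structural information about $K_+$ beyond what is already encoded in Corollary \ref{imbed_+} and the defining $H$-norm is needed.
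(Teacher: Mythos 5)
Your argument is correct and is essentially the same proof as in the paper: the paper writes the factorization as $u^p = \bigl(u^{2/\tau}/|x|^{\alpha/\tau}\bigr)\cdot\bigl(u^{p-2/\tau}|x|^{\alpha/\tau}\bigr)$ and applies H\"older with exponents $(\tau,\tau')$, which upon setting $\theta=1/\tau$ is exactly your factorization with $\tilde q=\tau'(p-2/\tau)$ and $\tilde\alpha=\tau'\alpha/\tau$, and the admissibility condition you derive for $\theta$ is the paper's inequality $\tau'(p-2/\tau)\le(2N+2\tau'\alpha/\tau)/(N-2)$. The only cosmetic difference is in the compactness upgrade, where the paper invokes $L^1$ compactness plus $L^p$ interpolation while you use a.e.\ convergence together with Vitali; these are interchangeable.
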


 \begin{proof} Suppose in addition to the hypothesis on $p$ take $p>2$ and then for  $u \in K_+$ with $ \| u\|_H =1$ and $1<\tau<\infty$ we have  
 \begin{eqnarray*} 
 \int_{B_1} u^p dx &=& \int_{B_1} \frac{ u^\frac{2}{\tau} }{ |x|^\frac{\alpha}{\tau}  }  \left\{ u^{ p - \frac{2}{\tau}} |x|^\frac{\alpha}{\tau}  \right\} dx \\
  & \le & \|u\|_H^\frac{2}{\tau}  \left( \int_{B_1}  u^{\tau'( p - \frac{2}{\tau})} |x|^\frac{\tau' \alpha}{\tau}  dx \right)^\frac{1}{\tau'}
 \end{eqnarray*} and now note we need to have some sort of H\'enon type imbedding for $K_+$.  Note this same proof so far would work on a general domain with a function with any type of symmetry.   By Corollary \ref{imbed_+} we see this integral on the right is bounded by a constant provided we have 

  \begin{equation} \label{in_par}
  \tau' \left( p - \frac{2}{\tau} \right) <  \frac{ 2N+ 2 \frac{\tau' \alpha}{\tau}  }{N-2}.
  \end{equation}  Since $ p<\frac{2N+2\alpha -4}{N-2}$ we can show (\ref{in_par}) for $ \tau$ sufficiently close to $ 1$ and this completes the proof of the continuity of the imbedding.  For compactness we use compactness in $L^1(B_1)$ along with standard $L^p$ interpolation. 
 \end{proof}
 
  In the proof of the above  Lemma it is  apparent that once one has a type of H\'enon imbedding then they get an suitable imbedding for $H$.

 \begin{prop} \label{point_sing} (Pointwise Invariance) Take $m=n$ and suppose $ u \in K_+$.  Then there is some $ v \in K_+$ which solves 
  \begin{equation} \label{eq_sing_invari}
\left\{\begin{array}{ll}
-\Delta v + \frac{v}{|x|^\alpha}  = u^{p-1} &  \mbox{ in } B_1, \\
v= 0 &   \mbox{ on } \partial B_1.
\end {array}\right.
\end{equation}
 \end{prop}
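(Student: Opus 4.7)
The plan is to adapt the proof of Proposition \ref{henon_prop_point} to the singular setting. For $u \in K_+$, I truncate $u_k(x) = \min\{u(x), k\}$, note that $u_k \in K_+$ as well, and for each $k$ solve the linear problem
\begin{equation}
-\Delta v^k + \frac{v^k}{|x|^\alpha} = u_k^{p-1} \text{ in } B_1, \qquad v^k = 0 \text{ on } \partial B_1,
\end{equation}
in the weak $H$ sense of Definition \ref{weaksingu}. Existence and uniqueness of $v^k$ follow from the Lax--Milgram theorem: the bilinear form $a(v,\phi) = \int_{B_1}(\nabla v\cdot\nabla\phi + v\phi/|x|^\alpha)\,dx$ is coercive on $H$ by construction, and since $u_k^{p-1} \in L^\infty(B_1)$ the map $\phi \mapsto \int_{B_1} u_k^{p-1}\phi\,dx$ defines a continuous linear functional on $H$ via Poincar\'e. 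The standard maximum principle gives $v^k \ge 0$, and $G$-invariance is automatic.

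Next I would establish the symmetry of $v^k$ across $\theta = \pi/4$ by the reflection argument of Proposition \ref{henon_prop_point}: since $m=n$, the coefficient $h(\theta)$ appearing in polar coordinates is odd about $\pi/4$, while $|x|^{-\alpha}$ and $u_k^{p-1}$ are even there, so $\widehat v(r,\theta) := v^k(r,\pi/2-\theta)$ and $v^k$ solve the same $H$-problem. Uniqueness forces $v^k = \widehat v$, so $v^k_\theta(r,\pi/4) = 0$. For the monotonicity in $\theta$, letting $w = v^k_\theta$ and differentiating the polar-coordinate equation yields
\begin{equation}
-\Delta w + \frac{w}{|x|^\alpha} + \frac{(n-1)|x|^2\, w}{(x_1^2+\cdots+x_m^2)(x_{m+1}^2+\cdots+x_N^2)} = \partial_\theta(u_k^{p-1}) \ge 0
\end{equation}
on $\widetilde{\Omega}_0$, with $w = 0$ on the lateral rays $\theta = 0$ and $\theta = \pi/4$ (by smoothness of $v^k$ away from the origin together with the $\pi/4$-evenness) and on $r = 1$. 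Crucially, the singular term $|x|^{-\alpha}$ enters with the correct positive sign, so the operator remains maximum-principle compatible. I would then run the $\varepsilon$-truncation argument of \cite{Weth_annulus}, precisely as in Proposition \ref{pointwise_inv}, to conclude $w \ge 0$ in $\widetilde{\Omega}_0$; hence $v^k \in K_+$.

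To pass $k \to \infty$, I obtain uniform $H$-bounds by testing the equation for $v^k$ against $v^k$ and invoking the embedding $K_+ \hookrightarrow L^p(B_1)$ from Lemma \ref{compsingu}, valid since $p < (2N+2\alpha-4)/(N-2)$:
\begin{equation}
\|v^k\|_H^2 = \int_{B_1} u_k^{p-1} v^k \,dx \le \|u_k\|_{L^p}^{p-1}\|v^k\|_{L^p} \le C\|u\|_H^{p-1}\|v^k\|_H,
\end{equation}
so $\|v^k\|_H \le C\|u\|_H^{p-1}$. Extracting a weakly convergent subsequence $v^k \rightharpoonup v$ in $H$, and using the compactness $K_+ \subset\subset L^p$ together with $u_k \to u$ in $L^p$, I can pass to the limit in the weak formulation to identify $v$ as an $H$-solution of \eqref{eq_sing_invari}. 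Monotonicity and symmetry are inherited by $v$ via a.e.\ convergence along a further subsequence and the test-function argument from the closing paragraph of the proof of Proposition \ref{pointwise_inv}, away from the origin where all the measures are regular; hence $v \in K_+$.

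The main obstacle I anticipate is making the maximum principle for $w$ rigorous in the presence of the singular potential, since standard smoothness of $v^k$ up to the origin is unavailable when $\alpha$ is large and $H$ need not contain $C_c^\infty(B_1)$. The saving grace is that membership in $H$ already encodes polynomial-type vanishing of $v^k$ at the origin through the integrability of $(v^k)^2/|x|^\alpha$, which is exactly the regularity needed to justify the Moser-type cut-off test function $(-w-\varepsilon)_+$ without picking up any contribution from $\{0\}$. Once this delicate point is carried through, every remaining step is a transcription of the corresponding argument in Propositions \ref{pointwise_inv} and \ref{henon_prop_point}.
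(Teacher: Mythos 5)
Your proposal takes a genuinely different route from the paper's, and the difference is not incidental: the paper never attempts to prove a pointwise-invariance result directly on $B_1$ with the singular potential. Instead, it approximates the domain by punctured annuli $A_\varepsilon = \{\varepsilon < |x| < 1\}$ and solves $-\Delta v^\varepsilon + v^\varepsilon/|x|^\alpha = u^{p-1}$ with zero Dirichlet data on $\partial A_\varepsilon$. On $A_\varepsilon$ the potential $|x|^{-\alpha}$ is smooth and bounded, so the symmetry and monotonicity arguments of Proposition \ref{pointwise_inv} part 2 transfer verbatim, and $v^\varepsilon \in K_+(A_\varepsilon)$. Extending by zero to $B_1$, testing against $v^\varepsilon$, and using Lemma \ref{compsingu} gives a uniform $H$-bound; weak compactness and the weak closedness of $K_+$ then produce $v \in K_+$ solving the equation on $B_1 \setminus \{0\}$. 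The entire point of the domain approximation is to never have to differentiate the equation in $\theta$ at a place where the coefficient is singular.

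This is exactly where your proposal has a gap. You truncate $u$ to $u_k$ and work on the full ball, which forces you to justify the maximum-principle argument for $w = v^k_\theta$ in the presence of the singular potential. To make the test-function $(-w-\varepsilon)_+$ argument work you need $w$ to vanish near the origin in a controlled pointwise way (in the annulus and H\'enon proofs this came from $v^k \in C^{1,\delta}(\overline{\Omega})$, giving $|v^k_\theta| \le C r \sin\theta\,|\nabla v^k|$). Here that regularity up to $x=0$ is not available: for $\alpha$ large, $|x|^{-\alpha}$ is not in any Kato-type class, and $H$ need not even contain $C_c^\infty(B_1)$. Your appeal to "polynomial-type vanishing encoded by membership in $H$" only gives $L^2$-weighted decay of $v^k$, not pointwise decay of $\nabla v^k$ or of $v^k_\theta$, and it says nothing about whether the $\theta$-differentiated equation can be written classically or in a suitable weak form near the origin. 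Until that regularity is established (which would essentially require a singular-potential version of Proposition \ref{weight_linear} applied before the monotonicity step), the monotonicity claim is not justified. The paper's $A_\varepsilon$-approximation avoids this issue entirely, which is why it was chosen.
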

 
 \begin{proof}

 Our approach will be to approximate the domain via an annulus and take a limit.   For $ \E>0$ small set  $A_\E:=\{ x \in \IR^N: \E<|x|<1 \}$ and let   $ u \in K_+$.
 Consider the problem
 \begin{equation} \label{eq_cut_sing}
\left\{\begin{array}{ll}
-\Delta v^\E + \frac{v^\E}{|x|^\alpha}  = u^{p-1} &  \mbox{ in } A_\E, \\
v^\E= 0 &   \mbox{ on } \partial A_\E.
\end {array}\right.
\end{equation}
 Note this problem essentially fits into the exact framework of Proposition \ref{pointwise_inv} part 2 except for this $ |x|^\alpha $ term;  but this term has no effect on the approach.   So if we let $ K_+(A_\E)$ denote the obvious extension of $K_+$ to $A_\E$ we see that $ v^\E \in K_+(A_\E)$.  We now extend $v^\E$ to $B_1$ by extending it to be zero outside $ A_\E$ and note $ v_\E \in K_+$.   Then note we 
 \begin{eqnarray*}
 \int_{B_1} | \nabla v^\E|^2  + \int_{B_1} \frac{ (v^\E)^2}{|x|^\alpha} dx &=& \int_{B_1} u^{p-1} v^\E dx \\
 & \le & \left( \int_{B_1} u^{p'(p-1)} dx \right)^\frac{1}{p'} \left( \int_{B_1} (v^\E)^p dx \right)^\frac{1}{p} \\
 &\le& \|  u\|_{L^p(B_1)}^\frac{p}{p'} C \| v^\E \|_{H}
 \end{eqnarray*} where in the last step we used the imbedding of $K_+$.  From this we see there is some constant $C_u$ such that  $ \| v^\E \|_H \le C_u$ for all $ \E>0$ small.    From this we see we can pass to a subsequence and find some $v \in H$ such that $ v^\E \rightharpoonup v$ in $H$ and also since $K_+$ is convex and closed in $H$ we have it weakly closed in $H$ and hence $ v \in K_+$.    Note if $ \phi \in C_c^\infty(B_1 \backslash \{0\})$ we can easily pass to the limit in 
 \begin{equation} \label{weak_H}
 \int_{B_1} \nabla v^\E \cdot \nabla \phi + \frac{ v^\E \phi}{|x|^\alpha } dx = \int_{B_1} u^{p-1} \phi dx,
 \end{equation} and hence we have a solution at least on the punctured ball.
  \end{proof}

 \noindent 
 \textbf{Proof of Theorem \ref{sing_theorem}.}  
  We shall begin by observing that Theorem   \ref{var-pri}  can be easily adapted  to deal with singular problems like (\ref{eq_sing_pot}). The only major change is to replace the notion of the weak solutions in 
   condition  $(ii)$ of  Theorem \ref{var-pri} by  the one in Definition \ref{weaksingu} where the test functions $\phi$ belong to the space $H\cap L^p(B_1).$  Both conditions (i) and $(ii)$ follow from Lemma \ref{compsingu} and Proposition  \ref{point_sing} respectively. This proves the existence of a weak solution for  (\ref{eq_sing_pot}). 
   Also,  a similar argument as in the proof of Theorem \ref{nonradial} shows that the solution is nonradial provided
    \[p-2 > 4(N+2)/\beta_\alpha(\Omega).\] 
    
   Moreover, by Lemma \ref{betalpha} we have that  $ \beta_\alpha \rightarrow \infty$ as $ \alpha \rightarrow \infty$ and hence 
   the ground state solution $ u $  is non-radial for large values of $\alpha$. \\
   
 \noindent 
 \emph{Regularity of ground state solution.}  Let $ u \in K_+$ denote  a ground state solution of (\ref{eq_sing_pot}) and note since 
  $ 2<p< \frac{2N+2\alpha-4}{N-2}$ there is some $ t_0>1$ such that  $ u \in L^{p+2t_0-2}(B_1)$ (after considering the imbedding result).  For $k \ge 0$ define 
  \[ t_{k+1}= \frac{p t_k}{2}- \frac{(p-2)}{2},\] and note that $ t_k \nearrow \infty$ as  $k \rightarrow \infty$.  We will now show one has the following iteration result:  for $ k \ge 0$   
  \begin{equation} \label{iter_sing}
  \mbox{ if }  u \in L^{p+2t_k-2}(B_1) \quad \mbox{ then } \quad u \in L^{p+2t_{k+1}-2}(B_1).
  \end{equation} We now prove this iteration step;  let $ k \ge 0$ and suppose $ u \in L^{p+2t_k-2}(B_1)$.  For $ m \ge 1$ set 
  \begin{equation} 
\phi_m(x)=\left\{\begin{array}{ll}
u(x)^{2t_k-1}  &  \mbox{ if } u(x)<m, \\
u(x) m^{2t_k-2}     &   \mbox{ if }u(x) \ge m, \\
\end {array}\right.
\end{equation} and since $ 2t_k -1 >1$ we see that $ \phi_m \in H_0^1(B_1)$ and its also clear that we have $ \int_{B_1} |x|^{-\alpha} \phi_m^2 dx <\infty$ and hence $ \phi_m \in H$, so we can use $ \phi_m$ as a test function in the definition of $u$ be a weak $H$ solution of (\ref{eq_sing_pot}) to arrive at (after dropping a couple of positive terms from the left)
\begin{eqnarray*} 
\frac{(2t_k-1)}{t_k^2} \int_{\Omega_m} | \nabla u^{t_k}|^2 dx + \int_{\Omega_m} \frac{u^{2t_k}}{|x|^\alpha} dx & \le & \int_{\Omega_m} u^{p+2t_k-2} dx  \\
&&+ m^{2t_k-2} \int_{\Omega_m^c} u^p dx
\end{eqnarray*} where $\Omega_m=\{x \in B_1: u(x)<m\}$ and $ \Omega_m^c$ is its compliment in $B_1$.  Set $ \E_m:= m^{2t_k-2} \int_{\Omega_m^c} u^p dx$ and we will later show that $ \E_m \rightarrow 0$ as $ m \rightarrow \infty$.  Then note passing to the limit in the above inequality we arrive at 
\begin{equation} \label{ineq_sing} \frac{(2t_k-1)}{t_k^2} \int_{B_1} | \nabla u^{t_k}|^2 dx + \int_{B_1} \frac{ u^{2t_k}}{|x|^\alpha} dx \le \int_{B_1} u^{p+2t_k-2} dx, 
\end{equation} and note the integral on the right is finite since we have $ u \in L^{p+2t_k-2}(B_1)$ by hypothesis.  From this we see that $ u^{t_k} \in H$ and note that $u^{t_k}$ and now its easy to see that $u^{t_k} \in K_+$ and hence by the imbedding result we have $ u^{t_k} \in L^p(B_1)$ but note $ t_k p = 2t_{k+1} + p-2$ and hence we have proven the iteration step.  We now show  $\E_m \rightarrow 0$.   By hypothesis we have $ u \in L^{p+2t_k-2}(B_1)$ and hence $ \delta_m:=\int_{\Omega_m^c} u^{p+2t_k-2} dx \rightarrow 0$ and note that $ \E_m \le \delta_m$ which gives the desired result.    

With this iteration we have $ u \in L^T(B_1)$ for all $ 1<T<\infty$.  At this point we could attempt to appeal to some linear theory to show $u$ is bounded but we prefer to follow the iteration through.   Once we have $u$ bounded then we will switch to linear theory. \\ 

Starting at (\ref{ineq_sing}) and dropping a portion of the zero order part of the norm we arrive 
\[ \| u^{t_k} \|_H^2 \le \frac{t_k^2}{2t_k-1} \int_{B_1} u^{p+2t_k-2} dx,\] and using the imbedding of $K_+ $ into $L^p(B_1)$ we arrive at 
\[ \| u\|_{L^{p+2t_{k+1}-2}} \le  \left(  \frac{C_0 t_k^2}{2t_k-1} \right)^\frac{1}{2t_k} \| u\|_{L^{p+2t_k-2}}^\frac{p+2t_k -2}{2t_k},\] where $C_0$ is coming from the imbedding.   We write 
\[ \beta_k:= \| u \|_{L^{p+2t_k-2}}, \quad \gamma_k:=  \left(  \frac{C_0 t_k^2}{2t_k-1} \right)^\frac{1}{2t_k}, \quad \delta_k:=\frac{p+2t_k -2}{2t_k}\] and hence we have 
\[ \beta_{k+1} \le \gamma_k \beta_k^{\delta_k},\] for all $k \ge 0$.  Writing out the iteration we arrive at 
\[ \beta_{n+1} \le \left(\beta_0^{ \prod_{j=0}^n \delta_j} \right) \prod_{k=0}^n \left( \gamma_k^{  \prod_{i=k}^n \delta_i} \right).  \]  We now wish to show the right hand side in bounded in $n$ and hence this would give us the desired $L^\infty$ bound on $u$.  We first show that 
$ T_n:= \prod_{j=0}^n \delta_j$ is bounded.   Consider the log of $T_n$ and note we have 
\begin{eqnarray*}
\ln(T_n) &=& \sum_{j=0}^n \ln (\delta_j) \\ 
&=& \sum_{j=0}^n \ln \left( 1 + \frac{p-2}{2 t_j} \right) \\
& \le & \sum_{j=0}^n   \frac{p-2}{2t_j}
\end{eqnarray*} where we used the fact that $p>2$ and log is concave.  Now note one can get the explicit formula $ t_k = C \left( \frac{p}{2} \right)^k+1$ where $ C>0$ since $ t_0>1$. From this we see that $ \ln(T_n)$ is bounded and hence we have the same for $T_n$.   We now define $ T_{k,n}:=\prod_{i=k}^n \delta_i$ and similarly we get 
\begin{eqnarray*}
\ln(T_{k,n}) &=& \sum_{i=k}^n \ln \left(  1 + \frac{p-2}{2+2C2^{-i} p^i} \right)  \\
& \le & \sum_{i=k}^n   \frac{p-2}{  2 + 2C 2^{-i} p^i} \\
& \le & \frac{\hat{C} 2^k}{p^k}
\end{eqnarray*}  for some $ \hat{C}$ independent of $ k$ and $n$.  This shows $ T_{k,n}$ is bounded above. From this we see that to show $\prod_{k=0}^n \left( \gamma_k^{  \prod_{i=k}^n \delta_i} \right)$ is bounded it is sufficient to show that $ P_n:=\sum_{k=0}^n \ln( \gamma_k)$ is bounded.   But note that 
\[ P_n = \sum_{k=0}^n \frac{1}{2t_k} \ln \left(   \frac{C_0 t_k^2}{2t_k-1} \right),\] and noting the growth of $t_k$ we easily see this is bounded in $n$.   This completes the proof that $ u$ is bounded.

We will now apply Proposition \ref{weight_linear} to get more regularity.   Take $ t<0$ but very close and then note that $ u^{p-1} \in Y_t$ and by uniqueness of the solution to the linear problem we have $ u \in X_t$ and hence we have  $ |u(x)| \le C |x|^{t+\alpha}$.   We can now iterate this process.  For instance we have $| u(x)^{p-1}| \le C |x|^{(p-1)(t+\alpha)}|$ and we choose $ t_1:= (p-1)(t+\alpha)$ and apply the linear theory again to see that 
$ |u(x)| \le C |x|^{(p-1) (t+\alpha) + \alpha}$.  Writing out the iteration we see that for all $ t>0$ there is some $C_t>0$ such that $u(x) \le C_t |x|^t$.

 \hfill $\Box$

  %\bibitem{sing_pot} A. Aghajani,  S. Ai  and  C. Cowan, \emph{A singular potential problem},  (2021) preprint. 
 
 We now state a result from the preprint \cite{sing_pot} but we include a partial proof for the readers convenience.  This result will only be used when showing the decay of the solution near the origin. 
\begin{prop} \label{weight_linear} \cite{sing_pot}  (Linear theory for $ -\Delta \phi + \frac{\phi}{|x|^\alpha}$ in weighted $L^\infty$  spaces) For $N \ge 3,  \alpha>2$ and   $ t \in \IR$ define  the norms
 \[ \| f\|_{Y_t}:=\sup_{0<|x| \le 1} |x|^{-t} |f(x)|, \qquad \| \phi \|_{X_t}:= \sup_{0<|x| \le 1} |x|^{-t-\alpha} |\phi(x)|,\] we let $Y_t$ denote the completion of the bounded functions under the $Y_t$ norm and $X_t$ to denote the  continuous functions on $ \overline{B_1} \backslash \{0\}$ which have finite $X_t$ norm and with $ \phi=0$ on $ \partial B_1$. 
  Let $N \ge 3, \alpha >2$ and $ t \in \IR$. Then there is some $C>0$ such that for all $ f \in Y_t$ there is a  $ \phi \in X_t$ such that  
  \begin{equation} \label{init_lin}
 \left\{ \begin{array}{lcl}
\hfill   -\Delta \phi(x) + \frac{\phi(x)}{|x|^\alpha}  &=& f(x)\qquad \mbox{ in } B_1 \backslash \{0\},   \\
\hfill  \phi &=& 0 \hfill  \quad \mbox{ on }     \partial B_1,
\end{array}\right.
  \end{equation} and one has the estimate $ \| \phi\|_{X_t} \le C \| f\|_{Y_t}$.  For 
  \begin{equation} \label{cond_t}
  t> -\alpha - \frac{ \left\{ N-2 + \sqrt{ N^2-4N+8 } \right\}}{2},
  \end{equation} the solution $\phi$ is unique. 
 
\end{prop}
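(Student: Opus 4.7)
The plan is to prove existence by approximating on annuli and controlling the solution via a radial barrier; the constant $\alpha>2$ is exactly what makes the singular potential $|x|^{-\alpha}$ dominate the Laplacian near the origin, which is why a power-type barrier is available.

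First I would truncate: for $f\in Y_t$ bounded, consider on each annulus $A_\varepsilon=B_1\setminus\overline{B_\varepsilon}$ the Dirichlet problem $-\Delta\phi_\varepsilon+|x|^{-\alpha}\phi_\varepsilon=f$, $\phi_\varepsilon=0$ on $\partial A_\varepsilon$. Since the coefficients are bounded on $A_\varepsilon$, Lax--Milgram produces a unique weak solution $\phi_\varepsilon\in H^1_0(A_\varepsilon)$, and standard elliptic regularity makes it classical in $A_\varepsilon$. The point of the whole proof is then to obtain an $\varepsilon$-independent bound of the form $|\phi_\varepsilon(x)|\le C\|f\|_{Y_t}|x|^{t+\alpha}$.

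The key step is to construct a radial supersolution $w(r)$ on $(0,1]$ with $(-\Delta+r^{-\alpha})w\ge r^t$, $w\ge 0$ on $\partial B_1$, and $w(x)\le C|x|^{t+\alpha}$ near the origin. The natural ansatz is $w(r)=Ar^{t+\alpha}+B r^{t+\alpha+\beta}$ (with $A,B>0$ and a small $\beta>0$, or an additive constant adjustment) chosen so the dominant behavior is captured by
\[
(-\Delta+r^{-\alpha})(Ar^{t+\alpha})=Ar^t-A(t+\alpha)(t+\alpha+N-2)r^{t+\alpha-2}.
\]
Since $\alpha>2$, the Laplacian contribution has a strictly higher exponent than $r^t$ near the origin and is therefore subdominant; whatever its sign, it is absorbed either by choosing $A$ large (when it is positive) or by adding the correction term $Br^{t+\alpha+\beta}$ (whose $|x|^{-\alpha}$ contribution $Br^{t+\beta}$ beats its own Laplacian term and also dominates the offending piece). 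Matching constants at $r=1$ fixes $w$. The comparison principle on $A_\varepsilon$ applied to $\pm\phi_\varepsilon-\|f\|_{Y_t}w$ then yields the desired pointwise bound. A diagonal/local elliptic argument on compact subsets of $B_1\setminus\{0\}$ lets $\varepsilon\downarrow 0$, giving $\phi\in X_t$ with $\|\phi\|_{X_t}\le C\|f\|_{Y_t}$; density of bounded functions in $Y_t$ extends this to every $f\in Y_t$.

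For uniqueness I would take a difference $\phi\in X_t$ of two candidate solutions and test against $\phi$ itself on $B_1\setminus B_\varepsilon$, obtaining
\[
\int_{B_1\setminus B_\varepsilon}\!\left(|\nabla\phi|^2+\frac{\phi^2}{|x|^\alpha}\right)dx=\int_{\partial B_\varepsilon}\phi\,\partial_\nu\phi\,dS.
\]
Interior elliptic estimates and the $X_t$-bound give $|\phi|\lesssim|x|^{t+\alpha}$ and $|\nabla\phi|\lesssim|x|^{t+\alpha-1}$, which controls the boundary term but only under the simple threshold $t>-\alpha-(N-2)/2$. To reach the sharper condition $t>-\alpha-\frac{N-2+\sqrt{(N-2)^2+4}}{2}$, one instead uses a weighted multiplier $|x|^{-2\mu}\phi$ (equivalently, compares $\phi$ against the homogeneous radial barrier $|x|^\mu$) with $\mu$ the positive root of $\mu(\mu+N-2)=1$; this is precisely the number appearing in the hypothesis, and the boundary integral vanishes exactly in that regime.

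The main obstacle is the barrier construction in the second step: one has to verify that the chosen $w$ is a genuine supersolution \emph{uniformly in} $t\in\mathbb{R}$, which requires splitting cases according to the sign of $(t+\alpha)(t+\alpha+N-2)$ and choosing the exponents and constants in the correction term accordingly. Once the barrier is in place, the remaining steps are routine approximation and comparison arguments.
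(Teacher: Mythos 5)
Your existence argument follows the same approach as the paper: annular truncation $A_\varepsilon=B_1\setminus\overline{B_\varepsilon}$, an $\varepsilon$-independent radial barrier giving $|\phi_\varepsilon(x)|\le C\|f\|_{Y_t}|x|^{t+\alpha}$, and a diagonal argument as $\varepsilon\downarrow0$. The paper's barrier is $\bar\phi(x)=C_1|x|^{t+\alpha}+C_2(1-|x|^2)$ used on the full annulus, followed by a second barrier $C_4|x|^{t+\alpha}$ near the origin; your $w(r)=Ar^{t+\alpha}+Br^{t+\alpha+\beta}$ (plus an ``additive constant adjustment'') is a variant of the same construction, and with the case-splitting you flag it would work, so I have no real objection here, only that the paper's two-stage barrier is already explicit and handles all $t$ without extra casework.

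The uniqueness argument is where you diverge from the paper, and there is a genuine gap. The paper expands $\phi=\sum_k a_k(r)\psi_k(\theta)$ in spherical harmonics, sets $w=r^\gamma a_k$, and rules out an interior extremum of $w$ via the ODE provided $\gamma^2-(N-2)\gamma-1\le 0$; combined with the requirement $t+\alpha+\gamma>0$ this yields exactly~(\ref{cond_t}). Your proposal replaces this with a weighted-multiplier/barrier argument, and here the details do not close. First, the root you name is the wrong one: the positive root of $\mu(\mu+N-2)=1$ is $\mu_+=\frac{-(N-2)+\sqrt{(N-2)^2+4}}{2}$, which is \emph{not} the quantity $\frac{N-2+\sqrt{N^2-4N+8}}{2}$ in~(\ref{cond_t}); the latter equals $-\mu_-$, where $\mu_-=\frac{-(N-2)-\sqrt{(N-2)^2+4}}{2}$ is the \emph{negative} root. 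Second, with the positive root, $|x|^{\mu_+}$ vanishes at the origin, so a comparison of $\phi$ against $|x|^{\mu_+}$ requires $t+\alpha\ge\mu_+>0$, which is far more restrictive than~(\ref{cond_t}) and does not even cover bounded $\phi$ with $t+\alpha$ slightly negative. The correct barrier is the \emph{blowing-up} supersolution $v(x)=|x|^{\mu_-}$ (indeed $(-\Delta+|x|^{-\alpha})v=v|x|^{-2}(|x|^{2-\alpha}-1)\ge0$ for $|x|\le1$ and $\alpha>2$); setting $u=\phi/v$ converts the problem to one with a nonnegative zero-order coefficient and no singular potential, and $u\to0$ at the origin precisely when $t+\alpha+\gamma_+>0$, recovering~(\ref{cond_t}). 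Third, the ``equivalently'' is not warranted: the weighted multiplier $|x|^{-2\mu}\phi$ produces the quadratic form $\int|x|^{-2\mu}|\nabla\phi|^2+\mu(N-2\mu-2)\int|x|^{-2\mu-2}\phi^2+\int|x|^{-\alpha-2\mu}\phi^2$, whose positivity requires $2\mu^2-(N-2)\mu-1\le0$, a \emph{different} quadratic with different roots, and optimizing it gives only the weaker threshold $t>-\alpha-\frac{(N-2)+\sqrt{(N-2)^2+8}}{4}$. So as written your uniqueness step neither matches the claimed threshold nor agrees with itself; the fix is to use the negative root and the $u=\phi/v$ substitution rather than the energy identity.
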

 
 \begin{proof} Fix $N,\alpha $ and $t$ as in the hypothesis.  Let $ f \in Y_t$ with $ \|f\|_{Y_t}=1$.  Since $ \alpha>2$ we can fix $ 0<\E'<\frac{1}{4}$ small such that 
  \[ 1 - (t+\alpha) (t+\alpha-1) |x|^{\alpha-2} - (N-1)(t+\alpha) |x|^{\alpha-2} \ge \frac{1}{2} \qquad \forall\; 0<|x| \le \E',\] and note $ \E'$ only depends on $ N,\alpha$ and $t$.    We can now choose $ C_i=C_i(N,\alpha,t)>0$ such that 
  \[ C_1 \left\{1 - (t+\alpha) (t+\alpha-1) |x|^{\alpha-2} - (N-1)(t+\alpha) |x|^{\alpha-2} \right\} + C_2 \frac{\left\{ 2N + \frac{1-|x|^2}{|x|^\alpha} \right\} }{|x|^t} \ge 1, \; \;  \forall \;  0<|x|<1.\] For $ R_1<R_2$ we set $ A_{R_1,R_2}:=\{x \in \IR^N: R_1 <|x|<R_2\}$.  For $ 0<\E< \frac{\E'}{2}$ consider 
    \begin{equation} \label{annul_begin}
 \left\{ \begin{array}{lcl}
\hfill   -\Delta \phi_\E(x) + \frac{\phi_\E(x)}{|x|^\alpha}  &=& f(x)\qquad \mbox{ in } A_{\E,1},   \\
\hfill  \phi_\E &=& 0 \hfill  \quad \mbox{ on }     \partial A_{\E,1},
\end{array}\right.
  \end{equation} and note there is a classical solution.  Set $ \overline{\phi}(x):=C_1 |x|^{t+\beta} + C_2 (1-|x|^2)$ and by the maximum principle we have $ | \phi_\E(x)| \le \overline{\phi}$ in $A_{\E,1}$ for all small $\E$ (note $ \E'$ is fixed and we be varying $\E$). In particular there is some $ C_3>0$ such that $ \sup_{A_{\frac{\E'}{2},1}} | \phi_\E| \le C_3$ for all  small $\E>0$.  We now set 
$\overline{\psi}(x):= C_4  |x|^{t+\alpha}$ where $C_4=C_3+2$.  Then we can apply the maximum principle on $ A_{ \E, \E'}$ to see that $ | \phi_\E(x)| \le \psi(x)=C_4 |x|^{t+\alpha}$ in $ A_{ \E, \E'}$.  This shows that there is some $C>0$ such that for all small $\E>0$ we have $ \| \phi_\E\|_{X_t} \le C \|f \|_{Y_t}$ (where the norms are now over the annulus).  The main point is the constant $C$ does not depend on $\E$.  Taking $ \E=\E_m \searrow 0$ and applying a diagonal argument (using the equation to obtain the needed compactness away from the origin) there is some $ \phi \in X_t$ which solves (\ref{init_lin}) and we have the desired estimate.  \\

We now prove the uniqueness part.   Let $ \phi \in X_t$ solve (\ref{init_lin}) with $ f=0$.  We write $ \phi(x)= \sum_{k=0}^\infty a_k(r) \psi_k(\theta)$ where $ (\psi_k, \lambda_k)$ are the eigenpairs of the Laplace-Beltrami operator $ -\Delta_\theta = - \Delta_{S^{N-1}}$ on the unit sphere $ S^{N-1}$.   Then for all $ k \ge 0$ we have $a_k$ satisfies 
\begin{equation} \label{unique}
-a_k''(r) - \frac{(N-1) a_k'(r)}{r} +\frac{\lambda_ka_k}{r^2}+ \frac{a_k(r)}{r^\alpha}= 0  \qquad 0<r<1,
\end{equation}
with $ a_k(1)=0$ and $|a_k(r)|\le C_k r^{t+\alpha}$.   We now need to show that $ a_k=0$ for all $ k \ge 0$.  Take $ w(r):= r^\gamma a_k(r)$ where $ t+\alpha+\gamma>0$.  Then note we have $  w(1)=0=\lim_{r \searrow 0} w(r)$ and hence if $ w$ is not identically zero we can (after multiplying by $ -1$) see that $w$ attains its max at some $ 0<r_0<1$ with $ w(r_0)>0$,  $ w''(r_0) \le 0$ and $ w'(r_0)=0$.    Note the equation for $ w$ is given by 
\[ w''(r) + \left( \frac{N-1}{r} - \frac{2 \gamma}{r} \right) w'(r) + C_k(r) w(r), \quad 0<r<1,\] where 
\[ C_k(r) = \frac{\gamma (\gamma+1)}{r^2} - \frac{\gamma(N-1)}{r^2} - \frac{\lambda_k^2}{r^2}- \frac{1}{r^\alpha}.\]   Note if $ C_k(r_0) <0$ then evaluating the equation for $ w$ at $ r_0$ gives a contradiction.   Now note that 
\[ r^2 C_k(r_0) <  \gamma (\gamma+1) - \gamma(N-1) -1,\] and hence we have the desired contradiction provided $\gamma (\gamma+1) - \gamma(N-1) -1 \le 0$.  Let $ \gamma_-<\gamma_+$ denote the roots of this quadratic equation and note we need some $ \gamma$ such that $ t+\alpha + \gamma>0$ and $ \gamma \in (\gamma_-, \gamma_+)$.  So to find such a $ \gamma$ it is sufficient that $ t+\alpha + \gamma_+>0$ and writing this out gives (\ref{cond_t}). 

 \end{proof}

 \section{Nonradial solutions when $\Omega$ is a radial domain.} \label{non-rad-sect} 

%\textbf{ABBAS: I MADE THIS A SUBSECTION OF SECTION 3...IS THAT OKAY?  ALSO I ASSUMING $m=n$ here or ???? cant remember))} \\

In this section we  discuss the case when $a(x)=a(|x|)$ is radial,  and $\Omega$ is a radial domain,  
 that is $\Omega=\{x:\,  R_1\leq  |x|<R_2\} $ where $R_1 \geq 0$ and $R_2 \in (R_1, +\infty],$ 
 \begin{equation} \label{eqzvv}
\left\{\begin{array}{ll}
-\Delta u+\lambda u = a(|x|) u^{p-1} &  \mbox{ in } \Omega, \\
u>0    &   \mbox{ in } \Omega, \\
u= 0 &   \mbox{ on } \pOm.
\end {array}\right.
\end{equation}
where $\lambda=0$ for bounded domains and $\lambda=1$ where $\Omega=\IR^N$.  Note we are writing a general form that can handle all radial domains we consider.   When $ R_1=0$ then we are either on a ball or the full space.  When $ R_1>0$ then we are taking $ R_2$ finite (we are not examining exterior domains here) and then we should take $\Omega:=\{x : R_1<|x|<R_2\}$.  
 We shall prove that the solution obtained in Theorem \ref{theorem-nonlinear-annular}, Theorem  \ref{non-radial-henon}  and Theorem \ref{henon_full_thm} are  nonradial  under certain assumptions on $\Omega$ and $p$.\\

We require some preliminaries before stating  our theorem for the radial domain.  Consider the variational formulation of an eigenvalue problem given by
\begin{equation} \label{var_mu}
    \mu_1=\inf_{\psi \in H^1_{loc}(0, \frac{\pi}{4})}\Big \{ \int_0^{\frac{\pi}{4}} |\psi'(\theta)|^2 \omega(\theta) \,d\theta; \quad    \int_0^{\frac{\pi}{4}} |\psi(\theta)|^2\omega(\theta) \,d\theta=1, \int_0^{\frac{\pi}{4}} \psi(\theta)\omega(\theta) \,d\theta=0     \Big\},
\end{equation} where $\omega(\theta):=\cos^{n-1}(\theta) \sin^{n-1}(\theta)$ and suppose $ \psi_1$ satisfies the minimization problem.  Then $ (\mu_1,\psi_1)$ satisfies 
\begin{equation} \label{eigen_p}
\left\{\begin{array}{ll}
 -\partial_\theta (\omega(\theta) \psi_1'(\theta) )= \mu_1 \omega(\theta)\psi_1(\theta)&  \mbox{ in } (0, \frac{\pi}{4}), \\
\psi'(\theta)>0   &   \mbox{ in } (0, \frac{\pi}{4}), \\
\psi_1'(0)= \psi_1'(\frac{\pi}{4})=0, & 
\end {array}\right.
\end{equation}  
and note $(\mu_1,\psi_1)$ is the second eigenpair, the first eigenpair is given by $ (\mu_0,\psi_0)=(0,1)$. \\
An easy computation shows that 
\[\mu_1=   4(N+2), \qquad  \psi_1(\theta)=-\cos(4\theta)+\frac{2-N}{2+N}.\]

%**********************************************************
%\textbf{Craig:  Note quite sure if we need this}\\
%In our previous work (where we took $ n \le m$) we considered the quantity 
%\begin{equation} %\label{var_mu}
   % \mu_1^0=\inf_{\psi \in H^1_{loc}(0, \frac{\pi}{2})}\Big \{ \int_0^{\frac{\pi}{2}} |\psi'(\theta)|^2 \omega(\theta) \,d\theta; \quad    \int_0^{\frac{\pi}{2}} |\psi(\theta)|^2\omega(\theta) \,d\theta=1, \int_0^{\frac{\pi}{2}} \psi(\theta)\omega(\theta) \,d\theta=0     \Big\},
%\end{equation} and we then noted one could explicitly compute 
   %\[ \mu_1^0=2N, \quad \psi_1^0(\theta)= \frac{m-n}{N} - \cos(2 \theta).\]   In the current problem we are unable to compute $ \mu_1$ explicitly.   One comment is that if $ \psi_1$ satisfies (\ref{eigen_p})  then we can extend it evenly across $ \theta= \frac{\pi}{4}$ and see that it solves the same equation on $(0, \frac{\pi}{2})$ and has two zero's and hence by Sturm–Liouville theory it is the third eigenfunction on the full interval.     From this we can see that $ \mu_1 >2N=4n$. \\
   %******************************************************************
   
   We also recall the definition of the best constant in Hardy inequality for the domain $\Omega$,  that is,
\begin{equation} %\label{hardyy0}
\beta_\lambda(\Omega)=\inf_{u\in H_0^1(\Omega)}
\frac{\int_\Omega |\nabla u|^2 \, dx+\lambda \int_\Omega u^2 \, dx}{\int_\Omega \frac{u^2}{|x|^2} \, dx}. \end{equation}
   
We are now ready to state our general theorem regarding the existence of a non-radial solution for a fully radial problem.
\begin{thm}\label{nonradial}  Let $u$ be the $K_+$ ground state solution obtained in either of Theorems \ref{theorem-nonlinear-annular},  \ref{non-radial-henon}  or \ref{henon_full_thm}.  If \[p-2 > 4(N+2)/\beta_\lambda(\Omega),\]  then $u$ is a nonradial function.
\end{thm}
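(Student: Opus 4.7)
The plan is to argue by contradiction: assume $u$ is radial, and produce an admissible path in the class $\Gamma$ used in (\ref{critical value}) along which the maximum of $I$ is strictly less than $I(u)=c$, contradicting that $c$ is the infimum of such maxima. The perturbation direction will be the second eigenfunction of the weighted Sturm--Liouville problem (\ref{eigen_p}), namely
\[
\psi_1(\theta) = -\cos(4\theta) + \tfrac{2-N}{2+N}, \qquad \mu_1 = 4(N+2).
\]
I would set $v_\epsilon(x) := u(|x|)\bigl(1+\epsilon\, \psi_1(\theta(x))\bigr)$, where $\theta$ is the polar angle in the $(s,t)$-plane. Since $\psi_1'(\theta) = 4\sin(4\theta) \ge 0$ on $(0,\pi/4)$ and $\psi_1$ is already even across $\theta = \pi/4$, for all sufficiently small $\epsilon>0$ the function $v_\epsilon$ is $G$-symmetric, nonnegative, even across $\pi/4$, and nondecreasing in $\theta$ on $(0,\pi/4)$. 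Finiteness of $\int_\Omega u^2/|x|^2\,dx$, guaranteed by the Hardy inequality defining $\beta_\lambda$, shows $v_\epsilon \in H_0^1(\Omega)$, so $v_\epsilon \in K_+$. Because $K_+$ is a positive cone, the truncated ray $\gamma_\epsilon(t) := tTv_\epsilon$ for $t \in [0,1]$, with $T$ large enough that $I(Tv_\epsilon) \le 0$, belongs to $\Gamma$.

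The next step will be to maximize $I$ along this ray. Writing
\[
\max_{t\ge 0} I(tv) = \frac{p-2}{2p}\,J(v)^{p/(p-2)}, \qquad J(v):= \frac{\int_\Omega(|\nabla v|^2 + \lambda v^2)\,dx}{\bigl(\int_\Omega a|v|^p\,dx\bigr)^{2/p}},
\]
I would expand $J(v_\epsilon)$ to second order in $\epsilon$. The first-order term vanishes: since $a$ and $u$ are radial while $\int_0^{\pi/2}\psi_1\,\omega\,d\theta=0$ (the orthogonality extends from $(0,\pi/4)$ to $(0,\pi/2)$ by symmetry of both $\psi_1$ and $\omega$ across $\pi/4$), the $\theta$-integral of every linear correction is zero. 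For the second-order term I would use (i) the eigenvalue identity $\int_0^{\pi/2}(\psi_1')^2\omega = \mu_1\int_0^{\pi/2}\psi_1^2\omega$; (ii) the pointwise orthogonality $\nabla u\cdot\nabla\psi_1 = 0$, because $\nabla u$ is purely radial and $\nabla\psi_1$ is purely angular in the $(s,t)$-plane; and (iii) the Nehari identity $\int(|\nabla u|^2+\lambda u^2)\,dx = \int au^p\,dx$. A short calculation will give
\[
J(v_\epsilon) - J(u) \;=\; \epsilon^2 \kappa \Bigl[\mu_1 \int_\Omega \frac{u^2}{|x|^2}\,dx - (p-2)\int_\Omega(|\nabla u|^2+\lambda u^2)\,dx\Bigr] + O(\epsilon^3),
\]
with $\kappa>0$ depending only on $u$, $\psi_1$ and $\int au^p$.

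To conclude, I would apply the definition of $\beta_\lambda(\Omega)$, which yields $\int_\Omega u^2/|x|^2\,dx \le \beta_\lambda^{-1}\int_\Omega(|\nabla u|^2+\lambda u^2)\,dx$, so
\[
\mu_1 \int_\Omega \frac{u^2}{|x|^2}\,dx \le \frac{4(N+2)}{\beta_\lambda}\int_\Omega(|\nabla u|^2+\lambda u^2)\,dx < (p-2)\int_\Omega(|\nabla u|^2+\lambda u^2)\,dx,
\]
the final strict inequality being precisely the hypothesis. Hence $J(v_\epsilon) < J(u)$ for all sufficiently small $\epsilon>0$, and $\max_t I(\gamma_\epsilon(t)) < \tfrac{p-2}{2p}\,J(u)^{p/(p-2)} = I(u) = c$, the required contradiction. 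The main delicate point I anticipate is verifying the membership $v_\epsilon \in K_+$ uniformly across the three ambient geometries (annulus, ball in even dimension, full $\IR^N$); once that is in place, the remaining steps are a routine second-order expansion of $J$ combined with a single application of the Hardy inequality.
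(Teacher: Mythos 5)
Your argument is correct and is essentially the paper's own: you use the same eigenfunction $\psi_1$ with $\mu_1=4(N+2)$, the same perturbation $u\mapsto u(1+\epsilon\,\psi_1)$ kept inside $K_+$, the same orthogonality $\int_0^{\pi/2}\psi_1\,\omega\,d\theta=0$ to kill the first-order term, and the same combination of the eigenvalue identity, the Nehari identity and Hardy's inequality to make the second-order term strictly negative. The only cosmetic difference is that you maximize along the ray $t\mapsto t\,v_\epsilon$ using the closed formula $\max_t I(tv)=\tfrac{p-2}{2p}J(v)^{p/(p-2)}$ and expand the Rayleigh quotient $J$, whereas the paper parametrizes the path as $\tau(u+\sigma v)\ell$, introduces the implicit maximizing scale $g(\sigma)$, and reads off $h''(0)<0$ from the second-variation quantity $M(u,v)$ — the two computations coincide.
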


%\textbf{((CHANGEd $ \lambda_1(\Omega)$ to $ \beta_\lambda(\Omega)=\beta$ in proof below))}

\noindent
\textbf{Proof.} Let us assume  that   $u$   is a radial function.
Note that $K=K_+$ consists of functions $w=w(r,\theta)$ where $\theta \mapsto w(r,\theta)$ is non-decreasing on the interval $(0, \pi/4)$. Recall  that  $E_K(u)= c>0$ where the critical value $c$ is characterized by
$$c= \inf_{\gamma\in \Gamma}\max_{\tau\in [0,1] }E_K[\gamma(\tau)],$$
where \[\Gamma= \{\gamma\in C([0,1], V) : \gamma(0)=0\neq \gamma(1), E_K(\gamma(1))\leq 0\}.\]
For the sake of simplifying the notations, we use $E$ instead of $E_K$ in the rest of the proof. Let  $ \psi_1$ satisfies (\ref{eigen_p}), and  let $\psi$ be the extension of $\psi_1$  evenly across $ \theta= \frac{\pi}{4}.$  Note that $\psi$  solves the same equation on $(0, \frac{\pi}{2}).$ 
Set $v(r,\theta)=u(r)\psi(\theta)$ and note that $u+tv$ belongs to the set $K$ for $0<t<1$.  We first show that
\begin{equation}\label{qw}
 \int_\Omega |\nabla v|^2 \, dx+\lambda \int_\Omega v^2 \, dx-(p-1)\int_\Omega|a(|x|)u|^{p-2}v^2\,dx <0.
\end{equation}
To this end we need to show that $M(u,v)<0$ where
\begin{equation}\label{qw00}
  M(u,v):=\int_{\widehat\Omega} s^{n-1}t^{n-1}(v_t^2+v_s^2+\lambda v^2) \, ds\, dt-(p-1)\int_{\widehat\Omega}s^{m-1}t^{n-1} a(s,t)u^{p-2}v^2\,ds\,dt <0.
\end{equation}
Note first that it follows from the equation $-\Delta u+\lambda u=a(r)u^{p-1}$ that 
\begin{eqnarray}\label{qw1}
\int_{R_1}^{R_2} (u_r^2+\lambda u^2) r^{N-1}\, dr=\int_{R_1}^{R_2}a(r)u^p r^{N-1}\, dr.
\end{eqnarray}
It also from the definition of $\beta=\beta_\lambda(\Omega)$,  the best constant in Hardy inequality, that   
\begin{equation}\label{hardyy000} 
\beta \int_{R_1}^{R_2} \frac{u^2}{r^2}r^{N-1}\, dr\leq \int_{R_1}^{R_2} (u_r^2+\lambda u^2) r^{N-1} \, dr. \end{equation}
It follows from (\ref{qw1}) by writing $M(u,v)$ in polar coordinates that  
\begin{eqnarray*}
M(u,v)&=&\int_{R_1}^{R_2} \int_0^{\frac{\pi}{2}}\Big(\psi^2u_r^2+\frac{u^2\psi'^2}{r^2}+\lambda u^2\psi^2-(p-1)a(r)u^{p}\psi^2\Big)r^{N-1}\omega(\theta) \,d\theta\, dr\\
 &=& \int_{R_1}^{R_2} \int_0^{\frac{\pi}{2}}\frac{u^2\psi'^2}{r^2}r^{N-1}\omega(\theta)  \,d\theta\, dr-(p-2)\int_{R_1}^{R_2} \int_0^{\frac{\pi}{2}}\psi^2(u_r^2+\lambda u^2) r^{N-1}\omega(\theta) \,d\theta\, dr,
 \end{eqnarray*}
 where $\omega(\theta)=\cos^{n-1}(\theta) \sin^{n-1}(\theta)$.
 This together with the definition of $\mu_1=4(N+2)$ in (\ref{var_mu}) and the inequality  (\ref{hardyy000}) imply that 
\begin{eqnarray*}
M(u,v)
 &=& \mu_1\int_{R_1}^{R_2} \int_0^{\frac{\pi}{2}}\frac{u^2\psi^2}{r^2}r^{N-1}\omega(\theta) \,d\theta\, dr-(p-2)\int_{R_1}^{R_2} \int_0^{\frac{\pi}{2}}\psi^2(u_r^2+\lambda u^2) r^{N-1}\omega(\theta) \,d\theta\, dr\\
 &=& \int_0^{\frac{\pi}{2}} |\psi(\theta)|^2\omega(\theta) \,d\theta \Big (   \mu_1\int_{R_1}^{R_2} \frac{u^2}{r^2}r^{N-1}\, dr-(p-2)\int_{R_1}^{R_2} (u_r^2+\lambda u^2) r^{N-1}\, dr\Big)\\
 &\leq & \int_0^{\frac{\pi}{2}} |\psi(\theta)|^2\omega(\theta) \,d\theta  \Big (  \frac{\mu_1}{\beta}\int_{R_1}^{R_2} u^2_r r^{N-1}\, dr-(p-2)\int_{R_1}^{R_2} (u_r^2+\lambda u^2) r^{N-1} \, dr \Big)\\
 &=&\int_0^{\frac{\pi}{2}} |\psi(\theta)|^2\omega(\theta) \,d\theta \int_{R_1}^{R_2} (u_r^2+\lambda u^2) r^{N-1} \, dr\Big(\frac{\mu_1}{\beta}-(p-2)\Big)<0,
\end{eqnarray*}
where the last inequality follows from the fact that
\[\frac{\mu_1}{\beta}-(p-2)=\frac{4(N+2)}{\beta}-(p-2)<0.\]

 Set $\gamma_\sigma (\tau)= \tau({u}+ \sigma v)l$, where $l>0$ is chosen in such a way that $E\big( ({u}+ \sigma v)l\big)\leq 0$ for all $|\sigma|\leq 1$. Note that $\gamma_{\sigma}\in \Gamma$. We shall show that there exists $\sigma>0$ such that for every $\tau\in [0,1]$ one has  $E(\gamma_\sigma(\tau))< E({u})$, and  therefore, 
\[c\leq \max_{\tau\in[0,1]} E(\gamma_\sigma(\tau))< E({u}),\]
which leads to a contradiction since $E(u)=c.$
Note first that there exists a unique  smooth real  function $g$ on a small neighbourhood of zero with $g'(0)=0$ and $g(0)=1/l$ such that 
$\max_{\tau\in[0,1]} E(\gamma_\sigma(\tau))=E\big (g(\sigma)({u}+ \sigma v)l\big).$
We now define $h: \R \to\R$ by $$h(\sigma)=E\big (g(\sigma)({u}+ \sigma v)l\big)-E(u).$$ Clearly we have $h(0)=0$.  Note also that $h'(0)=0$ due to the facts that $E'(u)=0$ and $\int \psi \omega(\theta) \, d\theta=0. $ Finally 
 $h''(0)<0$ due to (\ref{qw}).  This in fact show that 
 \[\max_{\tau\in[0,1]} E(\gamma_\sigma(\tau))=E\big (g(\sigma)({u}+ \sigma v)l\big)<E(u),\]
for small $\sigma>0$  as desired.
\hfill $\Box$

 \section{Domains of triple revolution} \label{dim+1}

 In this section we consider domains of triple revolution.  In particular we consider 
 \begin{equation} \label{eq_triple}
\left\{\begin{array}{ll}
-\Delta u  = a(x) u^{p-1} &  \mbox{ in } \Omega, \\
u= 0 &   \mbox{ on } \pOm,
\end {array}\right.
\end{equation}
 where $\Omega$ is a bounded domain in $\IR^N$ which has a smooth boundary and which is a domain 
 of triple revolution.  Consider 
  \[ s= \left\{ x_1^2+ \cdot \cdot \cdot + x_m^2\right\}^\frac{1}{2}, \quad  t = \left\{ x_{m+1}^2+ \cdot \cdot \cdot + x_{m+n}^2\right\}^\frac{1}{2}, \quad \tau:=\left\{ x_{m+n+1}^2+ \cdot \cdot \cdot + x_{N}^2 \right\}^\frac{1}{2},\] so $s,t,\tau$ has dimension $m,n, l=N-(m+n)$ respectively.   Here the function $a$ is a function of $(t,s,\tau),$ that is $a=a(t,s,\tau).$   
  \begin{remark}
   Note that a radial domain  and a domain of double revolution are particular cases of domains of  triple revolution. However, domains of triple revolutions are not necessarily radial or domains of double revolution. Besides providing a framework to deal with more general domains,  this will create a pathway  to prove several multiplicity results for positive solutions on radial domains.     For instance an annulus can be seen as a radial domain and a domain of double revolution as well as a domain of triple revolution. Thus,  one can obtain new positive solutions for  a radial problem by looking into solutions having  a nontrivial  triple symmetry.   This is indeed the main motivation for this section.   
  \end{remark}

  In the previous sections we used polar coordinates in the $(s,t)$ plane.    In this section we will use spherical coordinates to describe the coordinates $ (s,t,\tau)$: \\
 \begin{equation}  \label{spher_cord} 
 s= r \sin(\theta) \cos(\phi), \quad t = r \sin(\theta) \sin(\phi), \quad \tau=r \cos(\theta),
 \end{equation}
 where $0<\theta< \pi$, $ 0<\phi < 2 \pi$ and $ r>0$;  but of course we have restricted $(s,t,\tau)$ to the first octant in $ \IR^3$ and hence  $ 0<\theta< \frac{\pi}{2}$, $ 0<\phi<\frac{\pi}{2}$, and $ r>0$.  
 Note that the function $a$ can be also seen as a function of
 $( \phi, \theta, r),$ that is $a=a( \phi, \theta,r).$
 
 The monotonicity we will use will be in $ \phi$ and hence it is also very natural to consider cylindrical coordinates for $ (s,t,\tau)$ but we chose spherical for variety and also since we have the case of an annulus in mind which may be more natural to consider spherical coordinates.  
 %\textbf{((I GUESS WE NEED TO FIX THIS..BELOW WE ALLOW S T TAU TO BE NEGATIVE AND ABOVE WE DON'T AND....))} 

 We now define 
 \[ U=\left\{(s,t,\tau) \in \IR^3: x=(x_1=s,x_2=0,..., x_m=0, x_{m+1}=t, x_{m+2}=0,..., x_N= \tau) \in \Omega \right\},\] where $x_i =0$ for $ i \notin \{1, n+1,N \}$.     We define $ \widehat{\Omega}=\{(s,t,\tau) \in U: s,t,\tau >0 \}$.  We now define 
 \[ \widetilde{\Omega}= \left\{(\phi,\theta,r) \in \left(0,\frac{\pi}{2} \right) \times   \left(0,\frac{\pi}{2} \right) \times (0,\infty):  (s,t,\tau) \in \widehat{\Omega} \right\},\] and 
 we also  define a subset of $ \widetilde{\Omega}$ given by 
  \[ \widetilde{\Omega}_0= \left\{(\phi,\theta,r) \in \left(0,\frac{\pi}{4} \right) \times  \left(0,\frac{\pi}{2} \right)  \times (0,\infty):  (s,t,\tau) \in \widehat{\Omega} \right\},\] where note the only change is we are now restricting $ 0<\phi<\frac{\pi}{4}$.  \\ 
 Take $ G=O(m) \times O(n) \times O(l)$ and  consider 
  \[ H^1_{0,G}(\Omega)=\left\{ u \in H_0^1(\Omega): gu = u \; \; \forall g \in G \right\}.\]
  We are now ready to state our monotonicity assumptions for the domians of triple revolution.  
  
  \begin{definition}\label{triple-def}[\textbf{The monotonicity assumption on the functions and the domain}]\\  
  Let $\Omega$ be a bounded domain of triple revoluion in $\R^N=\R^m\times\R^n\times\R^l.$
  
\begin{enumerate}

 \item ($K_-$ definition and domain assumptions) Suppose $ g^i=g^i( \phi, \theta)$ is smooth and positive on $ [0, \frac{\pi}{2}] \times [0, \pi/2]$ and for each fixed $ \theta \in (0,\pi/2)$ we have:
    $ \phi \mapsto g^i(\phi,\theta)$ even about $ \phi=\frac{\pi}{4}$, for $i=2$ we have the map is decreasing in $ \phi$ on $(0,\frac{ \pi}{4})$ and $ i=1$ we have it increasing in $ \phi$ on $(0,\frac{ \pi}{4})$. We also $ g^1< g^2$ on  $ [0, \frac{\pi}{2}] \times [0, \pi/2]$.    We consider domains where 
    \[ \widetilde{\Omega}=  \left\{(\phi,\theta,r): g^1(\phi,\theta)<r<g^2(\phi,\theta) \mbox{ for }  (\phi,\theta) \in \left(0, \frac{\pi}{2}\right) \times \left(0, \frac{\pi}{2} \right) \right\}.\] 
    Define $K_-$ to be  the set of nonnegative functions $ u \in H_{0,G}^1(\Omega)$ with $ u_\phi \le 0$ in $ \widetilde{\Omega}_0$ and which are even across $ \theta=\frac{\pi}{4}$.

\item ($K_+$ definition and domain assumptions) Suppose $ g^i=g^i( \phi, \theta)$ is smooth and positive on $ [0, \frac{\pi}{2}] \times [0, \pi/2]$ and for each fixed $ \theta \in (0,\pi/2)$ and $i=1,2$  we have
    $ \phi \mapsto g^i(\phi,\theta)$ is constant on $(0, \pi/2)$.  We consider domains $\Omega$ where 
        \[ \widetilde{\Omega}=  \left\{(\phi,\theta,r): g^1(\phi,\theta)<r<g^2(\phi,\theta) \mbox{ for }  (\phi,\theta) \in \left(0, \frac{\pi}{2}\right) \times \left(0, \frac{\pi}{2} \right) \right\}.\]   Note this includes the case of an annulus. 
      Define $K_+$  to be  the set of nonnegative functions $ u \in H_{0,G}^1(\Omega)$ with $ u_\phi \ge 0$ in $ \widetilde{\Omega}_0$ and which are even across $ \phi=\frac{\pi}{4}$.  %((THIS DEFINITION ABOVE HAS BEEN CHANGED... NEED TO CHECK))
    
   \item  ( $K_{-,\frac{\pi}{2}}$ definition and domain assumptions) Suppose $ g^i=g^i( \phi, \theta)$ is smooth and positive on $ [0, \pi/2] \times [0, \pi/2]$ and for each fixed $ \theta \in (0,\pi/2)$ we have:
     the map  $ \phi \mapsto g^2( \phi,\theta)$ is decreasing in $ \phi$ on $(0,\frac{ \pi}{2})$ and $ \phi \mapsto g^1(\phi,\theta)$  is increasing in $ \phi$ on $(0,\frac{ \pi}{2})$. We alsohave  $ g^1< g^2$ on  $ [0, \frac{\pi}{2}] \times [0, \pi/2]$.    We consider domains $ \Omega$  where 
    \[ \widetilde{\Omega}=  \left\{(\phi,\theta,r): g^1(\phi,\theta)<r<g^2(\phi,\theta) \mbox{ for }  (\phi,\theta) \in \left(0, \frac{\pi}{2}\right) \times \left(0, \frac{\pi}{2} \right) \right\}.\] 
    Define $K_{-,\frac{\pi}{2}}$ to be  the set of nonnegative functions $ u \in H_{0,G}^1(\Omega)$ with $ u_\phi \le 0$ in $ \widetilde{\Omega}$. 
    \end{enumerate}  
\end{definition}

  Here we state our main theorem for this section.

\begin{thm} \label{triple-thm}   Let  $\Omega$ be a bounded domain of triple revolution in $\R^N=\R^m\times\R^n\times\R^l$ and consider  (\ref{eq_triple}) with $ a=a(\phi,\theta,r) $ positive and sufficiently smooth.
    
    \begin{enumerate} \item Suppose $m=n$ and  $\Omega$ is a  domain satisfying the symmetry condition part 1 of Definition \ref{triple-def} and $ a_\phi \le 0$ in $ \widetilde{\Omega}_0$. Then for all 
   \[ 2< p<  \min \left\{ \frac{2(n+m+1)}{n+m-1},  \frac{2(n+l+1)}{n+l-1} \right\},\]there is a positive classical $K_-$ ground state solution $u $ of (\ref{eq_triple}).   Note this case includes the case of $\Omega$ an annulus. 
   
   \item Suppose $m=n$ and  $\Omega$  is a  domain satisfying the symmetry condition part 2 on Definition \ref{triple-def} and $ a_\phi \geq 0$ in $ \widetilde{\Omega}_0$. Then for all 
  \[ 2< p<  \min  \left\{ \frac{2(l+2)}{l},  \frac{2(n+m+1)}{n+m-1} \right\},\] there is a positive classical $K_+$ ground state solution $u $ of (\ref{eq_triple}).   Note this case includes the case of $\Omega$ an annulus. 
  
   \item Suppose $\Omega$  is a  domain satisfying the symmetry condition part 3  on Definition \ref{triple-def} with $n\leq m$   and $ a_\phi \le 0 $ in $ \widetilde{\Omega}$. 
    Then for all 
     \[ 2< p<  \min \left\{ \frac{2(n+m+1)}{n+m-1},  \frac{2(n+l+1)}{n+l-1} \right\},\]
    there is a positive   classical $K_{-,\frac{\pi}{2}}$ ground state solution $ u $ of (\ref{eq_triple}). 
      \end{enumerate} 

\end{thm}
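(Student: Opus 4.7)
The plan is to apply Theorem \ref{var-pri} with $K$ taken to be the appropriate convex cone $K_-$, $K_+$, or $K_{-,\frac{\pi}{2}}$ in cases (1), (2), (3) respectively. Theorem \ref{var-pri} produces a nontrivial $K$-ground state weak solution provided: (i) the compact embedding $K \subset\subset L^p_a(\Omega)$ holds in the stated range of $p$, and (ii) the pointwise invariance property holds, i.e., if $u \in K$ then the Dirichlet solution $v$ of $-\Delta v = a(x) u^{p-1}$ satisfies $v \in K$. A final classical regularity bootstrap, modeled on the one in the proof of Theorem \ref{theorem-nonlinear-annular}, upgrades the weak solution to a classical one.

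For (i), I pass to the spherical coordinates (\ref{spher_cord}) in the $(s,t,\tau)$ space, in which a $G$-invariant function in $H^1_0(\Omega)$ becomes a function on $\widetilde\Omega$ integrated against the weight $r^{N-1}\sin^{m+n-1}\theta\cos^{l-1}\theta\cos^{m-1}\phi\sin^{n-1}\phi$. In case (3), the monotonicity of $u$ in $\phi$ on the whole interval $(0,\pi/2)$ together with $n\le m$ is used, via a slicing argument as in Theorem A of \cite{orgin_an}, to bound integrals of $u^p$ by the same integrals restricted to a fixed $\phi$-subinterval; this reduces the problem to the effectively 2D $(r,\theta)$ weighted embedding supplied by Theorem \ref{embed-mrev}, which produces the claimed range of $p$. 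Cases (1) and (2) proceed analogously, using the evenness of $K$-functions across $\phi=\pi/4$ to reflect the integral on $(\pi/4,\pi/2)$ onto $(0,\pi/4)$, and then the monotonicity on $(0,\pi/4)$ (as in Proposition \ref{imbed_pi4_ann}(ii)) to restrict the effective integration in $\phi$ to a subinterval $[\pi/8,\pi/4]$ on which the $\phi$-weights stay bounded away from zero; the resulting 3D weighted embedding, with concentration only possible at $\theta=0$ (the $\tau$-axis) or $\theta=\pi/2$ (the face $\{\tau=0\}$), yields the stated critical exponents, the improved exponent $2(l+2)/l$ in case (2) reflecting that $K_+$ functions attain their maximum on the slice $\{\phi=\pi/4\}$, which further reduces the dimension of the possible concentration set.

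The main obstacle is property (ii). Given $u\in K$, I truncate $u_k:=\min\{u,k\}$ and invoke elliptic regularity to obtain a smooth $v_k\in C^{1,\alpha}(\overline\Omega)\cap H^1_{0,G}(\Omega)$ solving $-\Delta v_k = a(x)u_k^{p-1}$ with $v_k=0$ on $\partial\Omega$. In cases (1) and (2), the evenness of $v_k$ under $\phi\mapsto\pi/2-\phi$ follows from the hypothesis $m=n$, the evenness of $a$, and uniqueness of the Dirichlet problem, exactly as in Proposition \ref{pointwise_inv}. The monotonicity in $\phi$ comes from setting $w:=\partial_\phi v_k$ and differentiating the equation. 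Since $\partial_\phi$ commutes with the $\tau$-dependent part of the weighted Laplacian, a direct computation in polar coordinates on the $(s,t)$-plane gives
\begin{equation*}
-\Delta_x w + \left(\frac{m-1}{s(x)^2}+\frac{n-1}{t(x)^2}\right)w = \partial_\phi\bigl(a(x)u_k^{p-1}\bigr),
\end{equation*}
with a nonnegative zero-order potential. The boundary data for $w$ are: $w=0$ on $\phi=0$ (and $\phi=\pi/4$ in cases (1),(2), and $\phi=\pi/2$ in case (3)) by smoothness and evenness of $v_k$; on the Dirichlet part of the boundary, $v_k\equiv 0$ forces $w=-V_r g^i_\phi$, whose sign is controlled by the monotonicity of $g^1,g^2$ stipulated in Definition \ref{triple-def} together with the Hopf-type sign of $V_r$. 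Combined with the sign of $\partial_\phi(au_k^{p-1})$ inherited from the hypothesis on $a_\phi$ and $u\in K$, a maximum principle argument, made rigorous via the test function $(w\mp\varepsilon)_{\pm}$ as in \cite{Weth_annulus} and used already in Proposition \ref{pointwise_inv}, produces the required sign of $w$. Passing to the limit $k\to\infty$ via weak compactness in $H^1_{0,G}$ and strong convergence in $L^p_a$ from (i) yields the invariant $v\in K$. Finally, the Moser-type iteration of Theorem \ref{theorem-nonlinear-annular}, based on the recursion $t_0=1$, $t_{k+1}=qt_k/2-(p-2)/2$ with $q$ the compactness exponent from (i), and the truncated test function $u^{2t_k-1}\wedge i^{2t_k-1}\in K$, gives $u\in L^T(\Omega)$ for every $T<\infty$; Schauder theory then delivers the claimed classical positive solution to (\ref{eq_triple}).
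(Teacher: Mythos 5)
Your proposal follows essentially the same route as the paper: Theorem \ref{var-pri} is applied with $K \in \{K_-, K_+, K_{-,\frac{\pi}{2}}\}$, condition (i) is the compact embedding of Theorem \ref{compact-m}, condition (ii) is the pointwise invariance of Theorem \ref{pointwise_tau}, and regularity is obtained by the Moser-type iteration already used for Theorem \ref{theorem-nonlinear-annular}. The substance of your sketch of the embedding (reflection across $\phi=\pi/4$, then monotonicity to reduce to a $\phi$-slice where the weights are comparable) and of the pointwise invariance (truncation, evenness by uniqueness, differentiation in $\phi$, maximum principle via $(w\mp\varepsilon)_\pm$) matches the proofs of Theorems \ref{compact-m} and \ref{pointwise_tau}.

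One step you state too quickly. You write that evenness of $v^k$ across $\phi=\pi/4$ follows from uniqueness of the Dirichlet problem ``exactly as in Proposition \ref{pointwise_inv}.'' In that proposition the domain is an annular domain of double revolution, so the set $\{s=t=0\}$ is the origin, which is \emph{not} in $\overline\Omega$, and one may pass directly from $L(\widehat v)=L(v^k)$ to $\widehat v=v^k$. In the present triple-revolution setting, $\Gamma:=\{x\in\Omega: s=t=0\}$ is an $l$-dimensional set \emph{inside} $\Omega$ along which the lower-order coefficients of $L$ are singular; after setting $W:=v^k-\widehat v$ one only obtains $\Delta W=0$ in $\Omega\setminus\Gamma$ a priori, and one must still argue that the singularity is removable before invoking uniqueness. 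The paper does this via a cutoff/box-counting estimate using $\dim\Gamma=l\le N-2$ and $W\in C^{1,\alpha}(\overline\Omega)$. The same issue reappears in the maximum-principle step for $w=v^k_\phi$: the test function $(w\mp\varepsilon)_\pm$ must be supported away from $\Gamma$, which is guaranteed by the pointwise bound $|v^k_\phi|\le r\sin\theta\,|\nabla v^k|$ forcing $w\to 0$ near $\Gamma$. These points are fillable and the paper fills them, but they are genuinely new compared to Proposition \ref{pointwise_inv}, so citing that proposition is not sufficient.
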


Before discussing the proofs we write out some formula's we will need soon.  Given a function $v(x)$ defined on $\Omega$ (which has the $G$ symmetry) we have 
  \[ \int_\Omega v(x) dx = c \int_{\widehat{\Omega}} v(s,t,\tau) s^{n-1} t^{n-1} \tau^{l-1} ds dt d \tau,   \] where we are abusing notation as usual. If we further abuse notation we can write this in terms of spherical coordinates as 
  \[ \int_{\widetilde{\Omega}} v( \phi, \theta,r) d \mu(\phi,\theta,r)\] where 
  
    \[ d \mu( \phi,\theta,r)= r^{N-1} \sin^{m+n-1} (\theta) \cos^{m-1}(\phi) \sin^{n-1}(\phi) \cos^{l-1}(\theta) d \phi d \theta dr,\]  and   in the case of $m=n$ we have 
     \[ d \mu( \phi,\theta,r)= r^{2n+l-1} \sin^{2n-1} (\theta) \cos^{n-1}(\phi) \sin^{n-1}(\phi) \cos^{l-1}(\theta) d \phi d \theta dr.\]    
  Also note we can write the square of the gradient as 
  \[ | \nabla u(x)|^2 = u_r^2+  \frac{u_\theta^2}{r^2} + \frac{ u_\phi^2}{r^2 \sin^2(\theta)}. \]

As before we begin by examining the added compactness one gets. 

 \begin{thm}\label{compact-m}(Imbeddings for annular  domains) \label{compact_no_mono} Let $\Omega$ denote an annular of triple revolution  in $\R^N=\R^m\times\R^n\times\R^l$. 

\begin{enumerate}  \item (Imbedding without monotonicity) Suppose $\Omega$ has no monotonicity and  \[ 1 \le p < p_1(m,n,l):=\min \left\{ \frac{2(n+m+1)}{n+m-1},  \frac{2(m+l+1)}{m+l-1}, \frac{2(n+l+1)}{n+l-1} \right\}.\] Then 
$ H^1_{0,G}(\Omega) \subset \subset  L^p(\Omega)$.

\item (Imbedding with monotonicity) Suppose $ \Omega$ satisfy the symmetry condition part 1  in Definition \ref{triple-def},  $ n \le m$ and   
\[ 1 \le p<  p_2(m,n,l):=\min \left\{ \frac{2(n+m+1)}{n+m-1},  \frac{2(n+l+1)}{n+l-1} \right\}.\]
Then $K_- \subset \subset L^p(\Omega)$.
\item Suppose $ \Omega$ satisfy the symmetry condition part 3 in Definition \ref{triple-def},  $ n \le m$ and   
\[ 1 \le p<  p_2(m,n,l):=\min \left\{ \frac{2(n+m+1)}{n+m-1},  \frac{2(n+l+1)}{n+l-1} \right\}.\]
Then $K_{-, \frac{\pi}{2}} \subset \subset L^p(\Omega)$.
\item   Suppose $ \Omega$ satisfy the symmetry condition part 2 in Definition \ref{triple-def}  and   
\[ 1 \le p<  p_3(m,n,l):=\min  \left\{ \frac{2(l+2)}{l},  \frac{2(n+m+1)}{n+m-1} \right\}.\]
Then $K_+ \subset \subset L^p(\Omega)$.  
\end{enumerate} 
\end{thm}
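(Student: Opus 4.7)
The plan is to establish each imbedding by identifying the possible sources of loss of compactness and matching each to an appropriate weighted Sobolev inequality. After pulling back to $\widehat{\Omega}\subset\R^3$ via the $G$-symmetry, both $\|u\|_{L^p(\Omega)}^p$ and $\|u\|_{H^1_{0,G}(\Omega)}^2$ become integrals against the measure $d\mu=s^{m-1}t^{n-1}\tau^{l-1}\,ds\,dt\,d\tau$. Since $\Omega$ is annular, the only possible sources of non-compactness are concentrations of mass near the three \emph{free} coordinate planes $\{s=0\},\{t=0\},\{\tau=0\}$, which are not part of the Dirichlet boundary. The strategy is a region-splitting of $\widehat{\Omega}$ combined with the effective-dimension heuristic of Remark~\ref{remark_imbed_mon}.

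For part~(1), fix $\delta>0$ small with $3\delta^2<R_1^2$ and cover $\widehat{\Omega}$ by seven subsets: the deep interior $\{s,t,\tau\ge\delta\}$, three single-plane neighborhoods (e.g.\ $\{s<\delta,\,t,\tau\ge\delta\}$ and its cyclic permutations), and three two-plane neighborhoods (e.g.\ $\{s<\delta,\,t<\delta,\,\tau\ge\delta\}$ and its cyclic permutations); the triple intersection is empty by the choice of $\delta$. Using a smooth partition of unity to localize, on each piece the appropriate weighted Sobolev inequality has a different effective critical exponent: the interior piece has bounded weight and gives the ordinary 3D exponent $6$; the single-plane piece near $\{s=0\}$ gives $\tfrac{2(m+2)}{m}$, corresponding to Sobolev on $\R^m\times\R^2$, and cyclically for the other two; and the two-plane piece near $\{s=0\}\cap\{t=0\}$ gives $\tfrac{2(m+n+1)}{m+n-1}$, corresponding to Sobolev on $\R^m\times\R^n\times\R$, and cyclically for the other two. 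A direct arithmetic check shows that, for $\min(m,n,l)\ge 1$, the two-plane exponents dominate both the interior and the single-plane ones, so the global critical exponent is $p_1$. Continuous imbedding in $L^p$ for $p<p_1$ follows; compactness is then obtained from $L^2$--Rellich on each piece together with interpolation and the strict inequality of exponents.

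Parts~(2) and~(3) follow immediately from~(1): under $n\le m$ one has $\tfrac{2(n+l+1)}{n+l-1}\le \tfrac{2(m+l+1)}{m+l-1}$, so the middle exponent of $p_1$ is redundant and $p_1=p_2$, and since $K_-$ and $K_{-,\pi/2}$ are closed subsets of $H^1_{0,G}(\Omega)$ the compact imbedding is inherited. Part~(4) is the only case in which the convex class genuinely improves the critical exponent. For $u\in K_+$, the monotonicity $u_\phi\ge 0$ on $(0,\pi/4)$ and evenness across $\phi=\pi/4$ yield the Chebyshev-type bound
\[u(\phi_0,\theta,r)^p\le \frac{8}{\pi}\int_{\pi/8}^{3\pi/8}u(\hat\phi,\theta,r)^p\,d\hat\phi \qquad \forall\, \phi_0\in(0,\pi/2),\]
so that integration of $u^p$ over all of $\widetilde{\Omega}$ is controlled by integration over the sub-region $\phi\in(\pi/8,3\pi/8)$, where the weight $\cos^{m-1}\phi\sin^{n-1}\phi$ is bounded away from $0$ and $\infty$. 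This excludes concentration near $\phi=0$ (i.e.\ $\{t=0\}$) and $\phi=\pi/2$ (i.e.\ $\{s=0\}$), leaving only the $\tau$-axis concentration ($\theta\to 0$, equivalently $\{s=t=0\}$) with critical exponent $\tfrac{2(m+n+1)}{m+n-1}$, and the single $\{\tau=0\}$-plane concentration with critical exponent $\tfrac{2(l+2)}{l}$. Together these give $p_3$.

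The main technical obstacle throughout is the careful matching of the weighted Sobolev inequalities to the effective dimension in each region of the partition, and, in part~(4), justifying rigorously the reduction to the $\phi$-middle region in a manner compatible with the full $H^1$-norm, where the $u_\phi^2/(r^2\sin^2\theta)$ piece of $|\nabla u|^2$ must be controlled uniformly. These steps parallel the proofs of Theorem~A and Proposition~\ref{imbed_pi4_ann} for the double-revolution case; the triple-revolution setting simply introduces one additional free plane and one extra concentration scenario, but no qualitatively new mechanism.
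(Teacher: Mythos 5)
Your treatment of part~(1) is fine and, if anything, more refined than the paper's. The paper simply invokes Theorem~\ref{embed-mrev}, which decomposes $\Omega$ into the three overlapping pieces $\{s\ge\delta\}$, $\{t\ge\delta\}$, $\{\tau\ge\delta\}$ (in an annular domain at least one of these must hold) and applies a standard Sobolev inequality in effective dimension $N-n_i+1$ on each piece; this directly produces the three ``two-plane'' exponents of $p_1$. Your seven-piece decomposition refines that picture by separately treating the single-plane and interior regions, and your arithmetic check that the two-plane exponents are the binding ones is correct. Your treatment of part~(4) is also essentially the route the paper has in mind: a Chebyshev reduction in $\phi$ to a compact subinterval of $(0,\pi/2)$ where the $\phi$-weight is harmless, followed by identifying the two surviving concentration scenarios $\theta\to 0$ and $\theta\to\pi/2$.

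The problem is parts~(2) and~(3). Your claim that they ``follow immediately from~(1)'' because $p_1=p_2$ is false, and the error is a reversed inequality. The function $k\mapsto\frac{2(k+1)}{k-1}$ is \emph{decreasing}, so under $n\le m$ one has $n+l\le m+l$ and therefore
\[
\frac{2(n+l+1)}{n+l-1}\;\ge\;\frac{2(m+l+1)}{m+l-1},
\]
the opposite of what you wrote. Thus the exponent dropped in passing from $p_1$ to $p_2$, namely $\frac{2(m+l+1)}{m+l-1}$, is the \emph{smaller} of the two $l$-containing exponents, and $p_2\ge p_1$ with strict inequality in general (for instance $m=4,\,n=1,\,l=2$ gives $p_1=14/5$ and $p_2=3$). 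Since parts~(2) and~(3) assert a compact imbedding up to the larger range $p<p_2$, they are genuinely stronger statements than part~(1), and you cannot deduce a stronger imbedding from a weaker one merely by passing to the closed subsets $K_-$ or $K_{-,\pi/2}$. The restriction to the subset is exactly what must be \emph{used}: the paper's proof exploits the evenness of $u$ across $\phi=\pi/4$ together with $n\le m$ to show that the slab $\phi\in[\pi/3,\pi/2]$ contributes no more than the slab near $\phi=0$ (because after reflecting $\phi\mapsto\pi/2-\phi$ the weight $\cos^{m-1}\phi\sin^{n-1}\phi$ becomes $\sin^{m-1}\hat\phi\cos^{n-1}\hat\phi$, which is pointwise smaller on $(0,\pi/4)$ precisely when $m\ge n$). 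This effectively removes the $\{s=0\}$ concentration scenario, after which a split in $\theta$ at $\pi/4$ ensures that in one region $s$ is bounded below and in the other $\tau$ is bounded below, producing exactly the two exponents in $p_2$. Without this argument, the appeal to part~(1) leaves a genuine gap.
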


\begin{proof}  1.     This part follows from Theorem \ref{embed-mrev}. \\

\noindent  2.  By using 
 spherical coordinates for $ (s,t,\tau)$
 \[ s= r \sin(\theta) \cos(\phi), \quad t = r \sin(\theta) \sin(\phi), \quad \tau=r \cos(\theta),\]
 we have that  

\begin{eqnarray*}
\int_{\widehat{\Omega}}  u(s,t,\tau)^p s^{m-1} t^{n-1} \tau^{l-1} ds dt d\tau \\=\int_{0}^{\pi/2}\int_{0}^{\pi/2}\int_{g_1}^{g_2} r^{N-1}\sin^{m-1}(\theta)\cos^{m-1}(\phi)\sin^{n-1}(\theta)\sin^{n-1}(\phi) \cos^{l-1}(\theta) u(\phi,\theta, r)^p\, dr \, d \theta d\phi.
\end{eqnarray*} 
For $\phi \in [\pi/3, \pi/2]$ we have that
$\sin(\phi) \leq c \sin(\phi-\pi/4)$ for some constant $c>0$.  Thus, considering the  evenness 
 properties of $g_1$, $g_2$  and $\phi \mapsto u(\phi,  \theta, r)$  across $ \phi=\frac{\pi}{4}$  we obtain that 

\begin{eqnarray*}
\int_{\pi/3}^{\pi/2}\int_{g_1(\phi,\theta)}^{g_2(\phi,\theta)} r^{N-1}\cos^{m-1}(\phi)\sin^{n-1}(\phi) u(\phi,  \theta, r)^p\, dr \,  d\phi\\ \leq c^{n-1} \int_{\pi/3}^{\pi/2}\int_{g_1(\phi-\pi/4,\theta) }^{g_2(\phi-\pi/4,\theta) } r^{N-1}\cos^{m-1}(\phi-\pi/4)\sin^{n-1}(\phi-\pi/4)  u(\phi-\pi/4,  \theta, r)^p\, dr \,  d\phi\\
=c^{n-1}\int_{\pi/12}^{\pi/4}\int_{g_1(\phi,\theta)}^{g_2(\phi,\theta)} r^{N-1}\cos^{m-1}(\phi)\sin^{n-1}(\phi)  u(\phi,  \theta, r)^p\, dr \,  d\phi.
\end{eqnarray*}

Thus,  there is a constant $C_1>0$ such that 
\begin{eqnarray*}
\int_{0}^{\pi/2}\int_{g_1}^{g_2} r^{N-1}\cos^{m-1}(\phi)\sin^{n-1}(\phi) u(\phi,  \theta, r)^p\, dr \,  d\phi\\ \leq C_1 \int_{0}^{\pi/3}\int_{g_1}^{g_2} r^{N-1}\cos^{m-1}(\phi)\sin^{n-1}(\phi) u(\phi,  \theta, r)^p\, dr \,  d\phi.
\end{eqnarray*}
On the other hand,
\begin{eqnarray}\label{one}
 \int_{\pi/4}^{\pi/2}\int_{0}^{\pi/3}\int_{g_1}^{g_2} r^{N-1}\cos^{m-1}(\phi) r^{n-1} \sin^{n-1}(\phi) u(\phi,  \theta, r)^p\sin^{n+m-2}(\theta)\cos^{l-1}(\theta)\, dr \, d \phi\, d\theta \nonumber \\ =\int_{\{\widehat \Omega, \,\, s\geq \beta\}}  u(s,t, \tau )^p s^{m-1} t^{n-1}\tau^{l-1} ds dt d\tau
\end{eqnarray}
and
\begin{eqnarray}\label{two}
 \int_{0}^{\pi/4}\int_{0}^{\pi/3}\int_{g_1}^{g_2} r^{N-1}\cos^{m-1}(\phi) r^{n-1} \sin^{n-1}(\phi) u(\phi,  \theta, r)^p \sin^{n+m-2}(\theta)\cos^{l-1}(\theta)\, dr \, d \phi\, d\theta \nonumber \\=\int_{\{\widehat \Omega, \,\, \tau\geq \beta\}}  u(s,t, \tau )^p s^{m-1} t^{n-1}\tau^{l-1} ds dt d\tau
\end{eqnarray}
for some positive constant $\beta$. Therefore, for $(\ref{one})$, we have 
\begin{eqnarray*}
\left (\int_{\{\widehat{\Omega}, \,\, s\geq \beta\}}  u(s,t, \tau )^p s^{m-1} t^{n-1} \tau^{l-1}ds dt\right)^{2/p} \leq  C_2 \left (\int_{\{{\widehat{\Omega}}, \,\, s\geq \beta\}}  u(s,t, \tau)^p  t^{n-1}\tau^{l-1} ds dt\right)^{2/p}.
\end{eqnarray*}
Thus, by part 1),
\begin{eqnarray*}
 \left (\int_{\{{\widehat{\Omega}}, \,\, s\geq \beta\}}  u(s,t, \tau)^p  t^{n-1}\tau^{l-1} ds dt\right)^{2/p} &\leq & C_3\int_{\{{\widehat{\Omega}}, \,\, s\geq \beta\}}  (u^2+u_s^2+u_t^2+u_\tau^2)  t^{n-1}\tau^{l-1} ds dt d \tau\\ & \leq & C_4 \int_{\{{\widehat{\Omega}}, \,\, s\geq \beta\}}  (u^2+u_s^2+u_t^2+u_\tau^2)  t^{n-1} s^{m-1} \tau^{l-1}ds dtd\tau \\ &\leq & C_4 \int_{{\widehat{\Omega}}}  (u^2+u_s^2+u_t^2+u_\tau^2)  t^{n-1} s^{m-1}\tau^{l-1} ds dtd\tau=
C_5\|u\|^2_{H^1(\Omega)}.
\end{eqnarray*}
By a similar argument 
 for $(\ref{two})$ we have 
\[ \int_{0}^{\pi/4}\int_{0}^{\pi/3}\int_{g_1}^{g_2} r^{N-1}\cos^{m-1}(\phi) r^{n-1} \sin^{n-1}(\phi) u(\phi,  \theta, r)^p\, dr \, d \phi\, d\theta\leq C_6\|u\|^p_{H^1(\Omega)},
\]
from which the desired result follows.\\

\noindent  3.  Proof follows by the same argument as in  part 2.\\
\noindent  4.  Proof follows by the same argument as in the proof of Theorem \ref{embed-mrev}.

\end{proof}

\begin{remark}  It is worth noting that $p_i(m,n,l)$ for $i=1,2,3$ in Theorem \ref{compact-m} give an improved embedding beyond the standard  Sobolev embeddings.  In fact,  we have the following,
\begin{itemize}
    \item $p_1(m,n,l)> \frac{2N}{N-2}$ provided $m,n,l>1.$   
    \item  $p_2(m,n,l)\geq p_1(m,n,l),$ provided $ m \ge n$.   %((I ADDED THIS LAST ASSUMPTION ON M AND N))
    \item $p_3(m,n,l)>\frac{2N}{N-2}$ if and only if $1<l<N-2$.    Also, $p_3(m,n,l)\geq p_i(m,n,l)$ for $i=1,2$ provided  $n,m>1$.  \\
    Moreover, if $N$ is odd then  $p_3(m,n,l)$ is maximized (here $N$ is fixed and we are varying $m,n,l$) when  $l=(N-1)/2$  with the value
    \[p_3(m,n,\frac{N-1}{2})=\frac{2(N+3)}{N-1}\]
   and note that 
    \[\frac{2(N+3)}{N-1}> \frac{2N}{N-2} \text{  if and only if   } N>3.\]
\end{itemize}
\end{remark}

As before we consider the following linear problem given by 
  \begin{equation} \label{linear_tau}
\left\{\begin{array}{ll}
-\Delta v = a(x) u^{p-1} &  \mbox{ in } \Omega, \\
v= 0 &   \mbox{ on } \pOm.
\end {array}\right.
\end{equation}

 \begin{thm} \label{pointwise_tau} (Pointwise invariance property) 
 
 \begin{enumerate}
 
     \item Suppose $m=n \ge 1$,  $\Omega$ satisfies the $K_-$ domain assumptions from Definition \ref{triple-def} and $a_\phi \le 0$ in $\widetilde{\Omega}_0$. If $ u \in K_-$ and $v$ satisfies (\ref{linear_tau})  then $ v \in K_-$.  
     %Let $\Omega$ be a bounded domain of triple revoluion in $\R^N=\R^m\times\R^n\times\R^l.$
  %and $a_\phi \le 0$ in $\widetilde{\Omega}_0$. If $ u \in K_-$ and $v$ satisfies (\ref{linear_tau})  then $ v \in K_-$.  
     
     \item Suppose $m=n \ge 1$,  $\Omega$ satisfies the $K_+$ domain assumptions from Definition \ref{triple-def} 
     and $a_\phi \ge  0$ in $\widetilde{\Omega}_0$. If $ u \in K_+$ and $v$ satisfies (\ref{linear_tau})  then $ v \in K_+$. 
     %and $a_\phi \ge  0$ in $\widetilde{\Omega}_0$.  If $ u \in K_+$ and $v$ satisfies (\ref{linear_tau})  then $ v \in K_+$. 
     
     \item  Suppose $\Omega$ satisfies the $K_{-,\frac{\pi}{2}}$ domain assumptions from Definition \ref{triple-def} and $a_\phi \le  0$ in $\widetilde{\Omega}$.   If $ u \in K_{-,\frac{\pi}{2}}$ and $v$ satisfies (\ref{linear_tau})  then $ v \in K_{-,\frac{\pi}{2}}$.

      \end{enumerate}
 \end{thm}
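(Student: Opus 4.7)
The plan is to imitate the proof of Proposition~\ref{pointwise_inv}, replacing the 2D polar coordinates in the $(s,t)$-plane by the 3D spherical-type coordinates $(\phi,\theta,r)$ defined in~\eqref{spher_cord}, and to treat the three cases in parallel. Given $u$ in the appropriate cone, truncate by $u_k = \min\{u,k\}$ (which stays in the cone), and let $v^k \in H^1_{0,G}(\Omega) \cap C^{1,\alpha}(\overline{\Omega})$ denote the unique classical solution of the linear problem with right hand side $a(x) u_k^{p-1}$. The goal is then to show $v^k$ lies in the same cone as $u$ and to pass to the limit $k \to \infty$.

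A direct chain-rule computation, iterating the 2D identity used in the proof of Proposition~\ref{pointwise_inv} once in the $(s,t)$ plane at fixed $(r,\theta)$ and once in the $(\rho,\tau)$ plane at fixed $\phi$ (where $\rho = r\sin\theta$), recasts the equation in spherical coordinates as
\begin{equation}\label{eq:sph_plan}
-v^k_{rr} - \frac{(N-1)v^k_r}{r} - \frac{v^k_{\theta\theta}}{r^2} - \frac{v^k_{\phi\phi}}{r^2\sin^2\theta} + \frac{H(\theta)}{r^2}\, v^k_\theta + \frac{h(\phi)}{r^2\sin^2\theta}\, v^k_\phi = a\, u_k^{p-1},
\end{equation}
with $H(\theta) = (l-1)\tan\theta - (m+n-1)\cot\theta$ and $h(\phi) = (m-1)\tan\phi - (n-1)\cot\phi$. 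In cases (1) and (2) the assumption $m=n$ makes $h$ odd across $\phi = \pi/4$ and the right hand side even there, so $\widehat v(r,\theta,\phi) := v^k(r,\theta,\tfrac{\pi}{2}-\phi)$ solves the same boundary value problem; uniqueness forces $\widehat v = v^k$, i.e.\ $v^k$ is even across $\phi = \pi/4$. Case (3) skips this step entirely.

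For monotonicity, set $w = v^k_\phi$ and differentiate~\eqref{eq:sph_plan} in $\phi$ to obtain
\begin{equation}\label{eq:w_plan}
-\Delta w + \frac{h'(\phi)}{r^2\sin^2\theta}\, w \;=\; \partial_\phi\bigl(a\, u_k^{p-1}\bigr) \qquad \text{in } \widetilde{\Omega}_0 \text{ (cases 1, 2)\ or\ } \widetilde{\Omega} \text{ (case 3)}.
\end{equation}
Since $h'(\phi) = (m-1)\sec^2\phi + (n-1)\csc^2\phi \geq 0$, after translating back to Cartesian $x$ the coefficient of $w$ is nonnegative, so the operator in~\eqref{eq:w_plan} satisfies a weak maximum principle. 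The boundary data of $w$ are collected as follows: on $\phi = 0$, and on $\phi = \pi/2$ in case~(3), the $G$-symmetry and smoothness of $v^k$ force $w = 0$; on $\phi = \pi/4$ in cases~(1) and~(2), evenness yields $w = 0$; on the curved boundaries $r = g^i(\phi,\theta)$, differentiating $v^k(g^i(\phi,\theta),\theta,\phi) = 0$ in $\phi$ gives $w = -v^k_r\, g^i_\phi$, and combining the sign of $v^k_r$ induced by $v^k \geq 0$ in the interior ($v^k_r \leq 0$ on $r = g^2$ and $v^k_r \geq 0$ on $r = g^1$) with the prescribed $\phi$-monotonicity of the $g^i$ yields $w \leq 0$ in cases~(1),~(3), and $w = 0$ in case~(2) since there $g^i_\phi \equiv 0$. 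Together with the sign of $\partial_\phi(a u_k^{p-1})$ from the hypotheses on $a$ and $u$ (nonpositive in (1) and (3), nonnegative in (2)), the maximum principle, rendered rigorous via the Weth truncation $(w-\varepsilon)_+$ (respectively $(-w-\varepsilon)_+$) as in \cite{Weth_annulus} and Proposition~\ref{pointwise_inv}, delivers the correct sign of $w$ throughout.

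Finally, pass to the limit: test~\eqref{eq:sph_plan} against $v^k$ and apply the compact imbeddings of Theorem~\ref{compact-m} for the relevant cone to bound $\{v^k\}$ in $H^1_{0,G}(\Omega)$ uniformly in $k$; extract a weak limit $v$, and use convexity and strong (hence weak) closedness of each cone in $H^1_{0,G}(\Omega)$ to conclude $v$ lies in the cone. A standard density/test-function argument then verifies $v$ solves~\eqref{linear_tau} weakly. The principal obstacle, as in the 2D case, will be the curved-boundary analysis of $w$ in cases~(1) and~(3), where the sign of $w$ must be deduced indirectly from the $\phi$-monotonicity of the radii $g^1,g^2$ rather than from $w$ itself vanishing there; but this is precisely the argument already carried out in the proof of Proposition~\ref{pointwise_inv}.
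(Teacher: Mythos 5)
Your proposal tracks the paper's proof closely in structure (truncate $u$, solve the linear problem, establish evenness and then monotonicity of $v^k$ via maximum-principle arguments, pass to the limit), and your spherical-coordinate computations agree with the paper's operator $L$ once one specializes $m=n$ in cases (1) and (2). However, there are two technical issues specific to the triple-revolution setting that you elide, and both arise because the domain now contains the singular set $\Gamma := \{\,s=t=0\,\}$, i.e., the $\tau$-axis, of dimension $l = N-2n \le N-2$; this has no analogue in the double-revolution annular domains of Proposition~\ref{pointwise_inv}, so deferring to that proof does not cover the new difficulty.

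First, the assertion ``uniqueness forces $\widehat v = v^k$'' is not justified as stated. Both $v^k$ and $\widehat{v}$ solve the equation classically only on $\Omega \setminus \Gamma$, and to invoke uniqueness one must upgrade this to a statement about distributional solutions on all of $\Omega$. The paper does this via a removable-singularity lemma: setting $W = v^k - \widehat v$, one has $W \in C^{1,\alpha}(\overline\Omega) \cap C^\infty(\overline\Omega \setminus \Gamma)$ with $\Delta W = 0$ on $\Omega\setminus\Gamma$, and a box-counting-dimension cutoff argument (using $\dim(\Gamma)\le N-2$) shows $\Delta W = 0$ on all of $\Omega$ in the sense of distributions, whence $W \equiv 0$ by the maximum principle. (An alternative would be to note that for $m=n$, the reflection across $\phi=\pi/4$ is realized by an orthogonal transformation $R$ of $\IR^N$ preserving $\Omega$, so $\widehat v(x)=v^k(Rx)$ and $H^1_0$ uniqueness applies directly; but you state neither route.) Second, in the monotonicity step the Weth truncation $\psi = (w-\E)_+$ must vanish near $\Gamma$ before the test-function argument can close, since the Hardy-type weight $H(x)$ appearing in the equation for $w=v^k_\phi$ blows up there. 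The paper obtains this from the pointwise bound $|v^k_\phi| \le r\sin\theta\,|\nabla v^k(x)|$, read off from the spherical-coordinate form of the gradient; you do not mention this, yet without it the claim ``$\psi=0$ near $\partial\Omega_0$'' does not hold on the $\theta=0$ portion. Your boundary analysis on $\{\phi=0,\pi/4,\pi/2\}$ and on $\{r=g^i\}$, by contrast, is correct and matches the paper.
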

 
 \begin{remark}One can surely remove the $n \ge 2$ restriction but when proving $w=0$ in $ \Omega$ one needs to try a bit harder when choosing a suitable cut off (here we would have $ dim(\Gamma)=N-2$ and not strictly less that $N-2$). 
 \end{remark}

\noindent 
\textbf{Proof of Theorem \ref{pointwise_tau}.} Parts 1,2: We begin by taking $ u \in K_\pm$ since much of the proof is the same for either case and as before we consider $u_k=\min \{ u(x),k \}$ where $k$ is a large integer and note $ u_k \in K_\pm$.   Let $v^k$ denote a solution of (\ref{linear_tau}) with $u$ replaced with $u_k$ and then note by elliptic regularity we have $v^k \in H^1_{0,G}(\Omega) \cap  C^{1,\delta}(\overline{\Omega})$ for all $ 0<\delta<1$.   Now note we can write 
 \[ \Delta v^k(x) = v^k_{ss}+ v^k_{tt} +v^k_{\tau \tau}+ \frac{(m-1) v^k_s}{s}+ \frac{(n-1) v^k_t}{t}  + \frac{(l-1) v^k_\tau}{\tau},\] and a computation shows that 
 %\textbf{***************************************}   just some computations 
 %\[ s= r \sin(\theta) \cos(\phi), \quad t = r \sin(\theta) \sin(\phi), \quad \tau=r \cos(\theta),\]
  
 %\[ v_s = v_r r_s + v_\phi \phi_s + v_\theta \theta_s\]    $ r_s = \frac{s}{r}$.   $\tan( \phi)= \frac{t}{s}$. 
 %\[ \phi_s = \frac{-t}{s^2+t^2}.\]   $ \tan^2(\theta)= \frac{s^2 + t^2}{\tau^2}$, 
 %\[ \theta_s= \frac{s \tau}{r^2 \sqrt{s^2+ t^2}}.\] 

 %\[ v_t = v_r r_t + v_\phi \phi_t + v_\theta \theta_t\]   $r_t =\frac{t}{r}$ ,   $ \phi_t = \frac{s}{s^2 + t^2}$  
 %\[ \theta_t = \frac{ t \tau}{r^2 \sqrt{s^2 + t^2}}\]  
 %\[ \theta_\tau= \frac{-\sin(\theta)}{r}.\] 
 %\[ v_\tau = u_r \frac{\tau}{r} + u_\theta \frac{-\sin(\theta)}{r},\] so 
 %\[ \frac{v_\tau}{\tau} = \frac{v_r}{r} - \frac{\tan(\theta)}{r^2} v_\theta.\] 
 %\textbf{**************************************} 
 \begin{eqnarray*}
\frac{v^k_s}{s}+ \frac{v^k_t}{t}  &=& \frac{2 v^k_r}{r} + \frac{2  v^k_\theta}{r^2 \tan(\theta)}  + \frac{ v^k_\phi}{r^2 \sin^2(\theta)} \left( \frac{1}{\tan(\phi)}- \tan(\phi) \right).
 \end{eqnarray*} and 
 \[ \frac{v_\tau^k}{\tau}= \frac{v_r^k}{r} - \frac{ \tan(\theta) v_\theta^k}{r^2}.\] 
  From this we see that the equation for $v^k$ in spherical coordinates is given by 
 $ L(v^k) = a u_k^{p-1}$  where 
 \begin{eqnarray*}
 L(v) &=& \left\{-v_{rr}- \frac{(2n+l-1) v_r}{r} - \frac{ v_{\phi \phi}}{r^2 \sin^2(\theta)}- \frac{v_{\theta \theta}}{r^2} - \frac{v_\theta}{r^2 } \left( \frac{2n-1}{\tan(\theta)}- (l-1) \tan(\theta) \right) \right\}  \\
 &&+ \frac{(n-1) v_\phi}{r^2 \sin^2(\theta)} h(\phi), \\
 &=& L_0(v) + \frac{(n-1) v_\phi}{r^2 \sin^2(\theta)} h(\phi),
 \end{eqnarray*}
 where $ h(\phi)= \tan(\phi)- \frac{1}{\tan(\phi)}$. 
  Note from this we see that 
 \begin{equation} \label{der_pp}
 \partial_\phi L(v)= L(v_\phi) + \frac{(n-1) v_\phi}{r^2 \sin^2(\theta)} h'(\phi).
 \end{equation}

 We now show that $v^k$ has the desired symmetry across $ \phi=\frac{\pi}{4}$.  Note we have 
 \[L(v^k)(\phi,\theta,r) = a(\phi, \theta,r) u_k(\phi,\theta,r)^{p-1}=:g(\phi,\theta,r) \quad \mbox{ in } \widetilde{\Omega},\] with suitable boundary conditions.  Define 
 \[ \widehat{v}(\phi,\theta,r)= v^k\left( \frac{\pi}{2}-\phi,\theta,r\right),\]  and hence our goal is to show that $ \widehat{v}=v$ which would prove $ v^k$ is even in $ \phi$ across $ \phi=\frac{\pi}{4}$.  A 
  computation shows 
 \[ L(\widehat{v})(\phi,\theta,r)=L_0(v^k)( \frac{\pi}{2} - \phi,\theta,r) + \frac{(n-1) (-1)v_\phi(\frac{\pi}{2}-\phi,\theta,r)}{r^2 \sin^2(\theta)} h(\phi)\]but noting that $h$ is odd across $ \phi=\frac{\pi}{4}$, ie. $ -h(\phi)= h( \frac{\pi}{2}- \phi)$, we have 
 \[ L( \widehat{v})( \phi,\theta,r)= L(v^k)( \frac{\pi}{2}-\phi,\theta,r)= g( \frac{\pi}{2}-\phi,\theta,r)=g(\phi,\theta,r)\] after noting that $g$ is even across $ \phi=\frac{\pi}{4}$ since both $a$ and $u_k$ are.   Hence we see that $L(\widehat{v})(\phi,\theta,r)=L(v^k)(\phi,\theta,r)$ in $ \widetilde{\Omega}$.    We now discuss the boundary conditions for $ v^k$ (and $ \widehat{v}$) in some detail.  This will be more needed later when we examine the monotonicity of $v^k$.    
  Define
 \begin{equation} \label{bc_1}
 \Gamma_{\phi=0}=\left\{ (\phi=0, \theta,r): g^1(0,\theta)<r<g^2(0,\theta), 0<\theta< \frac{\pi}{2} \right\}  \mbox{ and similarly  } \Gamma_{\phi = \frac{\pi}{2}}, 
 \end{equation}
 
 \begin{equation} \label{bc_2}
 \Gamma_{r=g^1}=\left\{ (\phi, \theta,g^1(\phi,\theta)): 0<\phi<\frac{\pi}{2}, 0<\theta<\frac{\pi}{2} \right\}  \mbox{ and similarly  } \Gamma_{r=g^2}, 
 \end{equation}
   \begin{equation} \label{bc_3}
 \Gamma_{\theta=0}=\left\{ (\phi, \theta=0,r): g^1(\phi,0)<r<g^2(\phi,0), 0<\phi<\frac{\pi}{2}  \right\}  \mbox{ and similarly  } \Gamma_{\theta= \frac{\pi}{2}}. 
 \end{equation}  
 
 First note that $ v^k, \widehat{v}$ are both zero on $ \Gamma_{r=g^i}$ for $ i=1,2$ (to see the result for $\widehat{v}$ use the fact that $g^i$ is even across $ \phi=\frac{\pi}{4}$.   By the smoothness of $v^k$ (and hence $\widehat{v}$) (and since the functions are even across $ \phi=0$ and $ \phi=\frac{\pi}{2}$) we have $v^k_\phi= \widehat{v}_\phi=0$ on $ \Gamma_{\phi=0}$ and $ \Gamma_{\phi=\frac{\pi}{2}}$.   By smoothness and symmetry we also get $ v^k_\theta= \widehat{v}_\theta=0$ on $\Gamma_{\theta= \frac{\pi}{2}}$.    Note $\Gamma_{\theta=0}$  corresponds to a portion of the positive $ \tau$ axis.   Set $ w(\phi, \theta,r)= v^k(\phi, \theta,r) - \widehat{v}(\phi, \theta,r)$ defined on $ \widetilde{\Omega}$.   Also note we have $ L(w)(\phi,\theta,r) =0$ for $ (\phi,\theta,r) \in \widetilde{\Omega}$ with $ w=0$ on $ \Gamma_{r=g^i}$ for $ i=1,2$;   $ w_\phi=0$ on $ \Gamma_{\phi=0} \cup \Gamma_{\phi=\frac{\pi}{2}}$ and $ w_\theta=0$ on $ \Gamma_{\theta= \frac{\pi}{2}}$.     Set $ \Gamma=\{x \in \Omega:  s=t=0 \}$ and note that $ dim(\Gamma) = l= N-2n \le N-2$.  Also note in terms of $x$ we have 
 \[ \Delta w(x)=0  \quad \mbox{ in } \Omega \backslash \Gamma,\]  with $ w=0$ on $ \pOm$.    We now claim that since $ w \in C^{1,\alpha}(\overline{\Omega}) \cap C^\infty( \overline{\Omega} \backslash \Gamma)$ and since $ dim(\Gamma) \le N-2$ we have $ \Delta w=0$ in $ \Omega$ in sense of distributions and then we can apply the maximum principle to see $ w=0$ in $\Omega$.  
 
 We now prove the claim.  Take a smooth function $ g$ on $ \IR$ with $ g(t)=0$ for $ t \le 1$ and $ g(t)=1$ for $ t \ge 2$  and consider $\delta_\Gamma(x)=dist(x,\Gamma)$ (the Euclidean distance) and fix $ x_0 \in \Gamma$ but not an endpoint since the endpoints lie on $\pOm$.  Note that $\delta_\Gamma$ is smooth near $ x_0$ and we now set 
 \[ \gamma_\E(x) = g \left(  \frac{\delta_\Gamma(x)}{\E} \right),\] and note $g_\E$ is smooth near $ x_0$.  Let $ \psi $ be smooth and compactly supported near $x_0$ and note a    
 computation shows that 
 \begin{eqnarray*}
 \Big| \int_\Omega \gamma_\E w \Delta \psi dx \Big| & = &\Big| \nabla \gamma_\E \cdot \left\{ \nabla w \psi - w \nabla \psi \right\} dx \Big| \\
 & \le & C \int_\Omega | \nabla \gamma_\E(x)|dx
 \end{eqnarray*} where $C$ independent of $ \E$ for small $\E$.  We now claim the right hand side converges to zero and hence we'd have 
 $ \int_\Omega w \Delta \psi dx=0$  which shows that $ \Delta w=0$ in $ \Omega$ in the sense of distributions.   We can now use  Hausdorff measure to prove the result but we prefer to use the box counting dimension, see \cite{hardy_cowan}  for instance.  Note that we have 
 \[ N-2 \ge dim_{box}(\Gamma):=N- \lim_{t \searrow 0} \frac{ \log(| \Gamma_t|)}{ \log(t)},\] where $|\Gamma_t|$ is the $N$ dimensional measure of $ \Gamma_t=\{x \in \Omega: \delta_\Gamma(x)<t \}$.   So there is some $\alpha(t) \rightarrow 0$ as $ t \searrow 0$ such that $|\Gamma_t| \le t^{\alpha(t)+2}$.   Then note we have 
 
 \begin{eqnarray*}
 \int_\Omega | \nabla \gamma_\E(x)| dx & \le & C \int_{\E<\delta_\Gamma<2\E}  \frac{1}{\E} dx \\ 
 & \le & C  \frac{| \Gamma_{2 \E}|}{\E}  \\ 
 & \le & C \frac{(2\E)^{2+ \alpha(2\E)}}{\E} \rightarrow 0,
 \end{eqnarray*} which proves the claim. \\

 \noindent 
 \textbf{Monotonicity.} We now show that $v^k$ has the desired monotonicity in $ \phi$ on $ \widetilde{\Omega}_0$.   Note that by (\ref{der_pp}) we see 
 \begin{equation} \label{shiii}
      L(v^k_\phi) + \frac{(n-1) h'(\phi) v_\phi^k   }{r^2 \sin^2(\theta)} = \partial_\phi (a u_k^{p-1}) \quad \mbox{ in }  \widetilde{\Omega}_0,
       \end{equation}
      and note $h'(\phi) \ge 0$ and hence there is hope for a maximum principle for the operator on the left acting on $ v^k_\phi$.
 We now define the boundaries and note we are really taking the boundaries from above and suitably adjusting them to $ 0<\phi<\frac{\pi}{4}$ instead of $ 0<\phi<\frac{\pi}{2}$. So we have 
 \begin{equation} \label{bc_10}
 \Gamma_{\phi=0}^0=\left\{ (\phi=0, \theta,r): g^1(0,\theta)<r<g^2(0,\theta), 0<\theta< \frac{\pi}{2} \right\}  \mbox{ and similarly  } \Gamma_{\phi = \frac{\pi}{4}}^0, 
 \end{equation}
 
 \begin{equation} \label{bc_20}
 \Gamma_{r=g^1}^0=\left\{ (\phi, \theta,g^1(\phi,\theta)): 0<\phi<\frac{\pi}{4}, 0<\theta<\frac{\pi}{2} \right\}  \mbox{ and similarly  } \Gamma_{r=g^2}^0, 
 \end{equation}
   \begin{equation} \label{bc_30}
 \Gamma_{\theta=0}^0=\left\{ (\phi, \theta=0,r): g^1(\phi,0)<r<g^2(\phi,0), 0<\phi<\frac{\pi}{4}  \right\}  \mbox{ and similarly  } \Gamma_{\theta= \frac{\pi}{2}}^0, 
 \end{equation}  \\ 
 
 %((BOUNDARY TERM ARGUMENTS HAVE TO BE ADJUSTED TO THE MORE GENERAL DOAMINS WITH $K_+$))
 
 \noindent
 \emph{Boundary terms $\Gamma_{\phi=0}^0 \cup \Gamma_{\phi=\frac{\pi}{4}}^0$.}
 Note by the smoothness and symmetry of $v^k$ we have $ v^k_\phi=0$ on $ \Gamma_{\phi=0}^0 \cup \Gamma_{\phi=\frac{\pi}{4}}^0$.    \\ 
 
  \noindent
 \emph{Boundary terms $ \Gamma_{r=g^1}^0 \cup \Gamma_{r=g^2}^0$.}  The boundary conditions here depend on with case of $K_\pm$ we are in.  First consider the case of $K_+$.   In this case because $v^k=0$ on $\Gamma_{r=g^1}^0 \cup \Gamma_{r=g^2}^0$ and $g^i$ is constant in $ \phi$ we see that $ v^k_\phi=0$ on  $\Gamma_{r=g^1}^0 \cup \Gamma_{r=g^2}^0$.
   We now suppose we in the case of $K_-$.  In this case we are either in the case of a annulus or a more general domain with suitable monotonicity.  In the case of a annulus we have $v^k_\phi=0$ on $\Gamma_{r=g^1}^0 \cup \Gamma_{r=g^2}^0$ as in the case of $K_+$.  Using the fact that $v^k \ge 0$ in $ \Omega$ with $ v^k=0$ on $ \pOm$ and the monotonicity of the maps $ \phi \mapsto g^i(\phi,\theta)$ we see that $ v^k_\phi \le 0$ on $ \Gamma_{r=g^1}^0 \cup \Gamma_{r=g^2}^0$. \\ 
 
 \noindent 
 \emph{Boundary terms  $\Gamma_{\theta=0}^0 \cup \Gamma_{\theta= \frac{\pi}{2}}^0$.} First we consider $ \Gamma_{ \theta= \frac{\pi}{2}}^0$.   Note by the smoothness of $ v^k$ we have $ v^k_\theta=0$ on $ \Gamma_{ \theta= \frac{\pi}{2}}^0$ and hence we have $ 0= (v^k_\theta)_\psi = (v^k_\psi)_\theta$ on $\Gamma_{ \theta= \frac{\pi}{2}}^0$.   We now examine the term  $ \Gamma_{\theta=0}^0$.  Note that we can write $ \nabla v^k(x)$ as  
 \begin{equation} \label{grad_form}
 \nabla v^k(x) = v_r^k \hat{r} + \frac{ v_\theta^k}{r} \hat{\theta} + \frac{v^k_\phi}{r \sin(\theta)} \hat{\phi},
 \end{equation} where $ (\hat{\phi},\hat{\theta},\hat{r})$ are the unit vectors in spherical coordinates. From this we see that 
 \begin{equation} \label{bound_near_axis}
 | v_\phi^k| \le r \sin(\theta) | \nabla v^k(x)|.
 \end{equation}
 This shows that, at least in some limiting sense,  we have $v_\phi^k=0$ on $ \Gamma_{\theta=0}^0$.  We can now either work in spherical coordinates or translate back to coordinates in $x$; we will choose the latter since its more familiar to apply the maximum principle.
Writing the left hand side of (\ref{shiii}) we arrive at 
\[-\Delta v_\phi^k(x) + (n-1)H(x) v_\phi^k (x)= \partial_\phi (a u_k^{p-1}), \quad \mbox{ in } \Omega_0,\] where $\Omega_0:=\{x \in \Omega:  0<\phi<\frac{\pi}{4},   x \notin \Gamma \}$ and 
\[ H(x)=  \frac{ \sum_{i=1}^{2n} x_i^2  }{  \left( \sum_{i=1}^n x_i^2  \right)\left(  \sum_{i=n+1}^{2n} x_i^2 \right)}.\] 

%**************************** OLD BELOW... TAU ONE DIMENSION*********************** \\ \textbf{((THIS FORMULA FOR $H$ NEEDS TO BE CHECKED... I DID THIS BEFORE WE ADDED DIMENSION $l$ FOR $\tau$))} 
%\[ H(x)=  \frac{ \sum_{i=1}^{N-1} x_i^2  }{  \left( \sum_{i=1}^n x_i^2  \right)\left(  \sum_{i=n+1}^{N-1} x_i^2 \right)}.\] 
%It appears the case for general $m,n$ is 
%\[ H(x)=    \frac{ \sum_{i=1}^{m+n} x_i^2  }{  \left( \sum_{i=1}^m x_i^2  \right)\left(  \sum_{i=m+1}^{m+n} x_i^2 \right)}.\] 
%******************************\\ 

We now consider the case of $K_-$. Let $ \E>0$ be small and consider $ \psi=(v_\phi^k-\E)_+$ and note $ \psi=0$ near $\Gamma$  after considering (\ref{bound_near_axis}) and also $ \psi=0$ near the portions of the boundary of $ \Omega_0$ corresponding to $ \phi=0$ and $ \phi=\frac{\pi}{4}$ (but we really will only need the result for $ \phi=0$ since $H$ is not singular at $ \phi=\frac{\pi}{4}$).     Note that  $ \psi=0$ near $ \partial \Omega_0$.  From this we have 
\[ \int_{\Omega_0} \nabla v^k_\phi \cdot \nabla \psi dx + (n-1) \int_{\Omega_0} H v_\phi^k \psi dx = \int_{\Omega_0} \partial_\phi (a u_k^{p-1}) \psi dx \le 0,\] after noting the assumptions on $a$ and $u$.   From this one sees that 
\[ \int_{\Omega_0} | \nabla \psi|^2 dx + (n-1) \int_{\Omega_0} H \psi^2 dx \le 0,\] and hence we have $ \psi=0$ in $ \Omega_0$ and hence we have $ v_\phi^k \le \E$ a.e. in $ \Omega_0$ and hence we have the desired result after noting $ \E>0$ is arbitrary.  \\ 

We now consider the case of $K_+$.  Consider $ \psi=(v_\phi^k+\E)_-$ where $ \E>0$ is small.   Then note we have $ \psi=0$ near $ \pOm_0$. As above we get 
\[ \int_{\Omega_0} \nabla v^k_\phi \cdot \nabla \psi dx + (n-1) \int_{\Omega_0} H v_\phi^k \psi dx = \int_{\Omega_0} \partial_\phi (a u_k^{p-1}) \psi dx \ge  0,\] after noting the assumptions on $a$ and $u$.  From this we can argue that 
\[ \int_{\Omega_0} | \nabla \psi|^2 dx + (n-1) \int_{\Omega_0} H \psi^2 dx \le 0,\] and hence $ \psi=0$ which gives $ v^k_\phi \ge -\E$ and hence we get the desired result.     We now need to pass to the limit in $k$,  but this follows from similiar arguments that we used in previous sections.  \\

\noindent 
3.  The proof for this part follows from similar type computations as in \cite{orgin_an} and some of the ideas used in part 1 and 2 of the previous proof to deal with the extra variable $\tau$, we omit the details. 
\hfill $\Box$  \\

 \noindent
 \textbf{Proof of Theorem \ref{triple-thm}.}     Once again, we are going to use Theorem  \ref{var-pri} for the proof. Note that
conditions (i) and (ii) in Theorem  \ref{var-pri} follows from Theorems \ref{compact-m} and \ref {pointwise_tau} respectively.
This proves the existence of a non-negative weak solution u of (\ref{eq_triple}).     To prove the solution is positive and  regular we use the same arguments we have used in the previous sections, we omit the details. 
 
  \hfill $\Box$

\subsection{Nonsymmetric solutions on domains of triple revolution} 

In this section we examine the case where the domain, the equation and $a$ have added symmetry and we examine the existence of solutions which do not inherit the same symmetry.    We also recall the definition of the best constant in Hardy inequality for the domain $\Omega$,  that is,
\begin{equation} \label{hardyy50}
\beta_0(\Omega)=\inf_{u\in H_0^1(\Omega)}
\frac{\int_\Omega |\nabla u|^2 \, dx}{\int_\Omega \frac{u^2}{|x|^2} \, dx}. \end{equation}

We first consider the case of radial symmetry and then we consider the case of cylindrical symmetry around the $\tau$ axis in the case of the variables $ (s,t,\tau)$.  

\subsubsection{The case of the annulus} 

Here we examine the case of $a(x)=a(|x|)$ is radial,  and $\Omega$ is the annulus  $\Omega=\{x:\,  R_1\leq  |x|<R_2\}$.

\begin{thm}   \label{annulus_triple_dep} 
 Let $u$ be the solution obtained in either parts of Theorem \ref{triple-thm}.  If $\beta_0$ is large enough then  $u$ depends on all three variables in a non-trivial way. 
\end{thm}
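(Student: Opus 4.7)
The plan is to follow the contradiction argument of Theorem~\ref{nonradial}, now in spherical coordinates $(r,\theta,\phi)$ and with three failure modes. Since $u>0$ in the annulus $\Omega$ and vanishes on both spheres $\{|x|=R_1\}$ and $\{|x|=R_2\}$, $u$ is automatically nontrivial in $r$; the two remaining failure modes are (A) $u=u(r,\theta)$ is independent of $\phi$, or (B) $u=u(r,\phi)$ is independent of $\theta$.

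In case (A) I would take $v(r,\theta,\phi)=u(r,\theta)\psi_1(\phi)$, where $\psi_1$ is the second eigenfunction with eigenvalue $\mu_1^\phi$ of the weighted Sturm--Liouville problem on the appropriate $\phi$-interval ($(0,\pi/4)$ with even extension for the $K_\pm$ setting, $(0,\pi/2)$ for the $K_{-,\pi/2}$ setting) with weight $\omega_\phi(\phi)=\cos^{m-1}\phi\sin^{n-1}\phi$; the sign of $\psi_1$ is chosen so that $u+tv\in K$ for small $t>0$, using the $\phi$-monotonicity constraint built into $K$. In case (B) I would set $v(r,\theta,\phi)=u(r,\phi)\chi_1(\theta)$, where $\chi_1$ is the second eigenfunction with eigenvalue $\mu_1^\theta$ of the weighted problem on $(0,\pi/2)$ with weight $\omega_\theta(\theta)=\sin^{m+n-1}\theta\cos^{l-1}\theta$; no sign constraint is required because $K$ imposes no $\theta$-monotonicity.

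In both cases $|\nabla v|^2=\eta^2|\nabla u|^2+u^2|\nabla\eta|^2$ (with $\eta$ the angular factor) because the spherical unit vectors $\hat r,\hat\theta,\hat\phi$ are mutually orthogonal and $u,\eta$ depend on disjoint variables, so the cross term $2u\eta\nabla u\cdot\nabla\eta$ vanishes. Testing the PDE $-\Delta u=a(|x|)u^{p-1}$ against $u\eta^2$ then gives the second-variation identity
\[
M(u,v)=\int_\Omega u^2|\nabla\eta|^2\,dx-(p-2)\int_\Omega \eta^2|\nabla u|^2\,dx,
\]
where $\int u^2|\nabla\eta|^2\,dx$ equals $\mu_1^\phi\int u^2\psi_1^2/(|x|^2\sin^2\theta)\,dx$ in case (A) and $\mu_1^\theta\int u^2\chi_1^2/|x|^2\,dx$ in case (B). Combining this with the Hardy inequality $\beta_0(\Omega)\int(\eta u)^2/|x|^2\,dx\le\int|\nabla(\eta u)|^2\,dx$ applied to the product $\eta u$ yields $M(u,v)<0$ provided $\beta_0(\Omega)$ exceeds an explicit multiple of $\max\{\mu_1^\phi,\mu_1^\theta\}/(p-2)$. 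With $M(u,v)<0$ in hand, the mountain-pass contradiction runs verbatim as in the last two paragraphs of Theorem~\ref{nonradial}: using $\gamma_\sigma(\tau)=\tau l(u+\sigma v)$ and $h(\sigma)=\max_\tau E_K(\gamma_\sigma(\tau))-E_K(u)$, the orthogonality $\int \eta\,\omega=0$ gives $h(0)=h'(0)=0$ and $M(u,v)<0$ gives $h''(0)<0$, producing an admissible path with $\max E_K(\gamma_\sigma)<c=E_K(u)$.

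The main technical obstacle is the $1/\sin^2\theta$ factor that appears in case (A), since $|\nabla\psi_1(\phi)|^2=\psi_1'^2/(|x|\sin\theta)^2$; the origin-Hardy alone does not directly control $\int u^2/(|x|\sin\theta)^2\,dx$, because the ratio $\int u^2/(|x|^2\sin^2\theta)\,dx/\int u^2/|x|^2\,dx$ can be unbounded in principle. Either one augments the origin-Hardy with the codimension-$(m+n)$ axis-Hardy $\frac{(m+n-2)^2}{4}\int u^2/(|x|\sin\theta)^2\,dx\le\int|\nabla u|^2\,dx$, or one exploits an additional concentration-avoidance property of the ground state $u$ near the $\tau$-axis; either way, the threshold required of $\beta_0(\Omega)$ grows with the axis constants. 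Case (B) is much cleaner because $|\nabla\chi_1(\theta)|^2=\chi_1'^2/|x|^2$ has no singular denominator, so the proof there is essentially identical to that of Theorem~\ref{nonradial}.
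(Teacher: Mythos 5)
Your proposal follows the paper's own route: reduce to two failure modes (your cases (A) and (B) are, in that order, the paper's Cases II and I), perturb $u$ by multiplying with the second eigenfunction of the complementary angular Sturm--Liouville problem, show the second variation is negative by combining the PDE identity, the eigenvalue relation, and Hardy, and then run the mountain-pass contradiction $h(0)=h'(0)=0$, $h''(0)<0$ exactly as in Theorem \ref{nonradial}. Your weights $\omega_\phi$, $\omega_\theta$ match the paper's $w_{m,n}$, $w_l$, and your remark that case (B) closes cleanly because $|\nabla\chi_1(\theta)|^2=\chi_1'^2/r^2$ carries no $\sin^2\theta$ is exactly right.

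The concern you raise about case (A) is genuine and applies to the paper's sketch of Case II as well. After the eigenvalue substitution one is left with $\mu_{m,n}\int u^2\psi_{m,n}^2/(r^2\sin^2\theta)\,d\mu=\mu_{m,n}\int_\Omega v^2/(s^2+t^2)\,dx$, a codimension-$(m+n)$ axis-Hardy quantity, whereas (\ref{hardyy22}) only controls $\int_\Omega v^2/|x|^2\,dx$, which is strictly smaller since $s^2+t^2=|x|^2\sin^2\theta\le|x|^2$; the displayed conclusion of Case II therefore does not follow from (\ref{hardyy22}) alone. Your two suggested repairs point in the right direction, but neither is carried out, and the more natural one --- inserting the axis Hardy with constant $(m+n-2)^2/4$ --- does not fit the hypothesis ``$\beta_0$ large,'' because that axis constant is fixed and does not improve when the annulus is thinned to make $\beta_0$ large. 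Closing this correctly would require either a separate constant controlling $\int v^2/(s^2+t^2)$ to appear explicitly in the hypothesis of Theorem \ref{annulus_triple_dep}, or an additional quantitative estimate showing the $K$-ground state cannot concentrate near the axis $\{s=t=0\}$; as written, neither your proposal nor the paper's Case II supplies this.
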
 

\begin{proof}
We just do the proof for part 3 of Theorem \ref{triple-thm}. Other cases follows by the same argument.

Define 
\[w_l(\theta)= \sin^{N-l-1} (\theta)\cos^{l-1}(\theta), \qquad  w_{m,n}(\phi)=\cos^{m-1}(\phi) \sin^{n-1}(\phi).\]
  Consider the variational formulation of  eigenvalue problems given by
\begin{equation} \label{var_mu3}
    \mu_l=\inf_{\psi \in H^1_{loc}(0, \frac{\pi}{2})}\Big \{ \int_0^{\frac{\pi}{2}} |\psi'(\theta)|^2 w_l(\theta) \,d\theta; \quad    \int_0^{\frac{\pi}{2}} |\psi(\theta)|^2w_l(\theta) \,d\theta=1, \int_0^{\frac{\pi}{2}} \psi(\theta)w_l(\theta) \,d\theta=0     \Big\},
\end{equation}
and
\begin{equation} \label{var_mu31}
    \mu_{m,n}=\inf_{\psi \in H^1_{loc}(0, \frac{\pi}{2})}\Big \{ \int_0^{\frac{\pi}{2}} |\psi'(\phi)|^2 w_{m,n}(\phi) \,d\phi; \quad    \int_0^{\frac{\pi}{2}} |\psi(\phi)|^2 w_{m,n}(\phi) \,d\phi=1, \int_0^{\frac{\pi}{2}} \psi(\phi) w_{m,n}(\phi)\,d\phi=0     \Big\}.
\end{equation}
Let $\psi_l$ be the unique minimizer  in (\ref{var_mu3}), and $\psi_{m,n}$ be the unique minimizer in (\ref{var_mu31}).
Let $E$ be the formal Euler-Lagrange functional of (\ref{eq_triple}).\\
Let $u$ be the solution obtained in part 3 of  Theorem \ref{triple-thm}.    We divide the proof into two cases.  We first show that $u$ depends on $\theta$ in a nontrivial way provided
\[\frac{(p-1)\mu_l}{p-2}<\beta_0.\]
Then hen we show that $u$ depends on $\theta$ in a non-trivial way provided 
\[ \frac{(p-1)\mu_{m,n}}{p-2}<\beta_0.\]

{\it Case I.}  We proceed by way of contradiction.  Let us assume that $u$ is not a  function of $\theta.$
 
Set $v(r,\phi, \theta)=u(r, \phi) \psi_l(\theta)$.  We  just need to show that 
\begin{equation}\label{qw3}\langle E^{''}(u); v,v\rangle :=\int_\Omega |\nabla v|^2 \, dx-(p-1)\int_\Omega|a(|x|)u|^{p-2}v^2\,dx <0.\end{equation}

Note first that $u=u(r,\phi)$  satisfies the equation $-\Delta u=a(r)u^{p-1}$.  Multiplying both sides of the equation by $u(r,\phi)\psi^2_l(\theta)$ and integrating in spherical coordinates imply that

\begin{eqnarray}\label{qw13}
\int_{R_1}^{R_2}\int_{0}^{\frac{\pi}{2}}\int_{0}^{\frac{\pi}{2}}  ( u_r^2+\frac{u_\phi^2}{r^2 \sin^2(\theta)}) \psi_l^2(\theta) d \mu( \phi,\theta,r)=\int_{R_1}^{R_2}\int_{0}^{\frac{\pi}{2}} \int_{0}^{\frac{\pi}{2}}a(r)u^p \psi_l^2(\theta) d \mu( \phi,\theta,r).
\end{eqnarray}

It also follows  from the definition of $\beta_0=\beta_0(\Omega)$,  the best constant in Hardy inequality (\ref{hardyy50}) for the function $v=u(r,\phi) \psi_l(\theta)$ that 
\begin{equation}\label{hardyy11}
\int_{R_1}^{R_2}\int_{0}^{\frac{\pi}{2}}\int_{0}^{\frac{\pi}{2}} \left (u_r^2\psi_l^2(\theta)+ \frac{u_\phi^2\psi_l^2(\theta)}{r^2 \sin^2(\theta)}+\frac{u^2 \psi'^2_l}{r^2}\right )  d \mu( \phi,\theta,r) \geq \beta_0   \int_{R_1}^{R_2}\int_{0}^{\frac{\pi}{2}}\int_{0}^{\frac{\pi}{2}}  \frac{u^2\psi_l^2(\theta)}{r^2}  d \mu( \phi,\theta,r). \end{equation}
It now follows that   
\begin{eqnarray*}
\langle E^{''}(u); v,v\rangle &=& \int_{R_1}^{R_2} \int_0^{\frac{\pi}{2}}\int_0^{\frac{\pi}{2}}\left (u_r^2\psi_l^2+ \frac{u_\phi^2\psi_l^2}{r^2 \sin^2(\theta)}+\frac{u^2 \psi'^2_l}{r^2}-(p-1)a(r)u^{p}\psi_l^2  \right )  d \mu( \phi,\theta,r)\\
 &=& \int_{R_1}^{R_2} \int_0^{\frac{\pi}{2}}\int_0^{\frac{\pi}{2}}\frac{u^2\psi_l'^2}{r^2}d \mu( \phi,\theta,r)-(p-2)\int_{R_1}^{R_2} \int_0^{\frac{\pi}{2}}\int_0^{\frac{\pi}{2}}\left (u_r^2\psi_l^2+ \frac{u_\phi^2\psi_l^2}{r^2 \sin^2(\theta)}\right) d \mu( \phi,\theta,r)\\
 &=& (p-1)\int_{R_1}^{R_2} \int_0^{\frac{\pi}{2}}\int_0^{\frac{\pi}{2}}\frac{u^2\psi_l'^2}{r^2}d \mu-(p-2)\int_{R_1}^{R_2} \int_0^{\frac{\pi}{2}}\int_0^{\frac{\pi}{2}}\left (u_r^2\psi_l^2+ \frac{u_\phi^2\psi_l^2}{r^2 \sin^2(\theta)}+\frac{u^2 \psi'^2_l}{r^2}\right) d \mu\\
 &=& (p-1)\mu_l\int_{R_1}^{R_2} \int_0^{\frac{\pi}{2}}\int_0^{\frac{\pi}{2}}\frac{u^2\psi_l^2}{r^2}d \mu-(p-2)\int_{R_1}^{R_2} \int_0^{\frac{\pi}{2}}\int_0^{\frac{\pi}{2}}\left (u_r^2\psi_l^2+ \frac{u_\phi^2\psi_l^2}{r^2 \sin^2(\theta)}+\frac{u^2 \psi'^2_l}{r^2}\right) d \mu\\
 & \leq & \left ( \frac{(p-1)\mu_l}{\beta_0}-(p-2)\right )\int_{R_1}^{R_2} \int_0^{\frac{\pi}{2}}\int_0^{\frac{\pi}{2}}\left (u_r^2\psi_l^2+ \frac{u_\phi^2\psi_l^2}{r^2 \sin^2(\theta)}+\frac{u^2 \psi'^2_l}{r^2}\right) d \mu<0
 \end{eqnarray*}

{\it Case II.} Similar to the previous case,  we proceed by way of contradiction.  Let us assume that $u$ is not a  function of $\phi.$
 Set $v(r,\phi, \theta)=u(r, \theta) \psi_{m,n}(\phi)$. To conclude the proof we  show that 
\begin{equation}\langle E^{''}(u);v,v\rangle :=\int_\Omega |\nabla v|^2 \, dx-(p-1)\int_\Omega|a(|x|)u|^{p-2}v^2\,dx <0.\end{equation}

Note first that $u=u(r,\theta)$  satisfies the equation $-\Delta u=a(r)u^{p-1}$.  Multiplying both sides of the equation by $u(r,\theta)\psi^2_{m,n}(\phi)$ and integrating in spherical coordinates imply that

\begin{eqnarray}\label{qw132}
\int_{R_1}^{R_2}\int_{0}^{\frac{\pi}{2}}\int_{0}^{\frac{\pi}{2}}  ( u_r^2+\frac{u_\theta^2}{r^2 }) \psi_{m,n}^2(\phi) d \mu( \phi,\theta,r)=\int_{R_1}^{R_2}\int_{0}^{\frac{\pi}{2}} \int_{0}^{\frac{\pi}{2}}a(r)u^p \psi_{m,n}^2(\phi) d \mu( \phi,\theta,r).
\end{eqnarray}

It also follows  from the definition of $\beta_0=\beta_0(\Omega)$,  the best constant in Hardy inequality for the function $v=u(r,\theta) \psi_{m,n}(\phi)$ that 
\begin{equation}\label{hardyy22}
\int_{R_1}^{R_2}\int_{0}^{\frac{\pi}{2}}\int_{0}^{\frac{\pi}{2}} \left (u_r^2\psi_{m,n}^2(\phi)+ \frac{u^2\psi'^2_{m,n}(\phi)}{r^2 \sin^2(\theta)}+\frac{u_\theta^2 \psi^2_{m,n}}{r^2}\right )  d \mu( \phi,\theta,r) \geq \beta_0   \int_{R_1}^{R_2}\int_{0}^{\frac{\pi}{2}}\int_{0}^{\frac{\pi}{2}}  \frac{u^2\psi_{m,n}^2(\phi)}{r^2}  d \mu( \phi,\theta,r). \end{equation}
As in the proof of case one can deduce that   
\begin{eqnarray*}
\langle E^{''}(u); v,v\rangle \leq  \left ( \frac{(p-1)\mu_{m,n}}{\beta_0}-(p-2)\right )\int_{R_1}^{R_2}\int_{0}^{\frac{\pi}{2}}\int_{0}^{\frac{\pi}{2}} \left (u_r^2\psi_{m,n}^2(\phi)+ \frac{u^2\psi'^2_{m,n}(\phi)}{r^2 \sin^2(\theta)}+\frac{u_\theta^2 \psi^2_{m,n}}{r^2}\right )  d \mu( \phi,\theta,r) <0
 \end{eqnarray*}
\end{proof}
 
 We recall the following result from \cite{orgin_an}  about the largeness of the best constant $\beta_0$ in the hardy inequality where the domain is an annulus. 
 
 \begin{prop}\label{annul_resu}  \cite{orgin_an}  

\begin{itemize}
\item Let  $ R_1=R$ and $ R_2=R+1.$  Then $\beta_0$  is sufficiently large for large values of  $R$.   

\item Let $ R<\gamma(R)$ with $ \frac{\gamma(R)}{R} \rightarrow 1$ as $ R \rightarrow \infty$.   With $ \Omega_R=\{x \in \IR^N: R<|x|<\gamma(R) \}$ then for large enough $R$  the  $\beta_0$  corresponding to $ \Omega_R$ is sufficiently large.
\end{itemize}
\end{prop}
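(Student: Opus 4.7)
The plan is to bound the Hardy ratio from below by exploiting two features of the thin annular geometry: the smallness of the weight $1/|x|^2$ far from the origin, and a one-dimensional Friedrichs inequality in the radial direction, which is available because every $u \in H_0^1(\Omega)$ vanishes on both boundary spheres of the annulus.

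For the first bullet I would fix $\Omega = \{R < |x| < R+1\}$ and, for any $u \in H_0^1(\Omega)$, write $x = r\omega$ with $r = |x|$ and $\omega \in S^{N-1}$. The trivial bound
\[ \int_\Omega \frac{u^2}{|x|^2}\,dx \leq \frac{1}{R^2} \int_\Omega u^2\,dx \]
transfers the Hardy weight to a uniform one at the cost of a factor $1/R^2$. To control $\|u\|_{L^2(\Omega)}$ by the Dirichlet energy, I would note that for a.e.\ $\omega \in S^{N-1}$ the radial slice $r \mapsto u(r\omega)$ lies in $H_0^1(R, R+1)$, so the classical one-dimensional Friedrichs/Wirtinger inequality gives
\[ \int_R^{R+1} u(r\omega)^2\,dr \leq \frac{1}{\pi^2} \int_R^{R+1} |\partial_r u(r\omega)|^2\,dr. \]
Multiplying by $r^{N-1}$ and integrating in $\omega$, and observing that $r^{N-1}$ is essentially constant on $[R,R+1]$ for large $R$ (the ratio of its extremes being $(1+1/R)^{N-1} \to 1$), I would obtain $\int_\Omega u^2\,dx \leq (\pi^{-2} + o(1)) \int_\Omega |\nabla u|^2\,dx$. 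Combining this with the previous display yields $\beta_0(\Omega) \geq \pi^2 R^2 (1+o(1)) \to \infty$.

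The second bullet should follow by the same argument with width $1$ replaced by $\gamma(R) - R$. The Friedrichs constant on $[R,\gamma(R)]$ is $\pi^2/(\gamma(R) - R)^2$, and the quasi-constancy of $r^{N-1}$ on $[R,\gamma(R)]$ persists because $\gamma(R)/R \to 1$. This would yield
\[ \beta_0(\Omega_R) \geq \frac{\pi^2 R^2}{(\gamma(R) - R)^2}\,(1+o(1)), \]
and since $\gamma(R)/R \to 1$ is equivalent to $(\gamma(R) - R)/R \to 0$, the right-hand side diverges. I do not anticipate a serious obstacle; the only mildly delicate point is tracking the Jacobian factor $r^{N-1}$ in the radial slicing, which the hypothesis $\gamma(R)/R \to 1$ renders harmless.
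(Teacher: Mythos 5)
The paper states this proposition without proof, citing \cite{orgin_an}, so there is no internal proof here to compare against; I can only assess your argument on its own merits. Your argument is correct. Splitting off the Hardy weight via $\int_\Omega u^2/|x|^2\,dx \le R^{-2}\int_\Omega u^2\,dx$ and then controlling $\|u\|_{L^2}$ by the Dirichlet energy through the one-dimensional Friedrichs (Dirichlet eigenvalue) inequality on each radial slice is exactly the right mechanism, and the Jacobian factor $r^{N-1}$ is harmless precisely because the annulus is ``thin'' relative to its inner radius, which is what the hypothesis $\gamma(R)/R\to 1$ encodes. The resulting explicit bound $\beta_0(\Omega_R)\ge \pi^2 R^2 (\gamma(R)-R)^{-2}(1+o(1))\to\infty$ is in fact stronger than the qualitative statement asked for. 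One tiny remark: what you need is the Dirichlet Poincar\'e (Friedrichs) inequality, not Wirtinger's, which concerns mean-zero periodic functions; your constant $\pi^2/(b-a)^2$ is the correct one for the Dirichlet case.
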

 
 \begin{coro} \label{mult_ann} 
Let $p>2$ and $N>3.$ Consider the problem (\ref{eq_triple}) where $\Omega=\{x \in \R^N:\,\, R< |x|<R+1\}$ and $a\equiv1.$ 
 For large values of $R,$ there are at least
 \[ \Bigl\lfloor\frac{N}{2} \Bigr\rfloor+\Bigl\lfloor\frac{N-1}{2} \Bigr\rfloor+\Bigl\lfloor\frac{N-2}{2} \Bigr\rfloor+...+\Bigl\lfloor\frac{N-k}{2} \Bigr\rfloor, \qquad k=\Bigl\lfloor\frac{N}{3} \Bigr\rfloor, \]
 
 positive non radial solutions. Here $\Bigl\lfloor z \Bigr\rfloor$  stands for the  floor of $z \in \R.$ 
 \end{coro}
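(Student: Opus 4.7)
The plan is to realize the annulus $\Omega=\{R<|x|<R+1\}$ simultaneously as a domain of triple (or, degenerately, double) revolution with respect to many different orthogonal decompositions of $\R^N$, producing one $K_{-,\pi/2}$ ground state per admissible decomposition via Theorem \ref{triple-thm}. For each $l\in\{0,1,\ldots,k\}$ with $k=\lfloor N/3\rfloor$, and each unordered pair $\{m,n\}$ of positive integers with $m+n=N-l$, I view $\R^N=\R^m\times\R^n\times\R^l$. The thin annulus has $g^1\equiv R$ and $g^2\equiv R+1$, both constant in $\phi$ and in $\theta$, so the symmetry hypotheses of part 3 of Definition \ref{triple-def} hold trivially; since $a\equiv 1$, the requirement $a_\phi\leq 0$ is automatic. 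Thus part 3 of Theorem \ref{triple-thm} (or Theorem A of \cite{orgin_an} in the boundary case $l=0$, where the decomposition degenerates to $\R^N=\R^m\times\R^n$) produces a positive classical ground state $u_{(l,m,n)}\in K_{-,\pi/2}$ in the admissible $p$-range.

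Next I would use Proposition \ref{annul_resu} to get $\beta_0(\Omega)\to\infty$ as $R\to\infty$. In particular, for $R$ sufficiently large the condition $p-2>4(N+2)/\beta_0(\Omega)$ of Theorem \ref{annulus_triple_dep} is satisfied, and each $u_{(l,m,n)}$ depends nontrivially on all three spherical variables $(\phi,\theta,r)$ attached to its own decomposition; in particular every $u_{(l,m,n)}$ is non-radial.

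The crux is then distinctness: different pairs $(l,\{m,n\})$ must yield distinct solutions. Each $u_{(l,m,n)}$ is invariant under the block-diagonal subgroup $G_{l,m,n}=O(m)\times O(n)\times O(l)\subset O(N)$ and carries a prescribed monotonicity in the azimuthal angle $\phi$ of the $\R^m\times\R^n$ factor. When the multisets $\{m,n,l\}$ differ, the isotropy groups are non-conjugate in $O(N)$, so the ground states cannot coincide. When the multisets agree but the distinguished role of $l$ differs, the ground states are still distinct because their monotonicity and evenness axes lie along different coordinate planes of $\R^N$. Counting admissible pairs yields $\sum_{l=0}^{k}\lfloor(N-l)/2\rfloor$, which is the formula in the statement.

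The main obstacle is this distinctness step. Two ground states obtained from different convex cones live in the same ambient Hilbert space $H^1_0(\Omega)$, and one cannot a priori exclude that they coincide. To rule this out I would compare symmetry and monotonicity signatures simultaneously: any function in the intersection of two distinct cones $K^{(l,m,n)}_{-,\pi/2}\cap K^{(l',m',n')}_{-,\pi/2}$ must be invariant under the group generated by both symmetry subgroups, which is strictly larger; combined with the large-$\beta_0$ non-radiality forced by Theorem \ref{annulus_triple_dep}, this extra symmetry contradicts the characterization of $u_{(l,m,n)}$ as a mountain-pass ground state over its own cone.
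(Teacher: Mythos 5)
Your proposal follows the paper's proof strategy closely: enumerate block decompositions $\R^N=\R^m\times\R^n\times\R^l$, obtain a $K_{-,\frac{\pi}{2}}$ ground state for each from Theorem \ref{triple-thm} on the thin annulus, use Proposition \ref{annul_resu} so that $\beta_0$ is large, and invoke Theorem \ref{annulus_triple_dep} for nontrivial dependence on the three coordinates. Your bookkeeping $\sum_{l=0}^{k}\lfloor(N-l)/2\rfloor$ is in fact a cleaner account of the displayed formula than the paper's own description of $D_3=\{(m,n,l):m+n+l=N,\,1\le m\le n\le l\}$, whose cardinality does not equal the sum (for $N=7$ it has $4$ elements, not $\lfloor 6/2\rfloor+\lfloor 5/2\rfloor=5$); grouping by $l$ and counting unordered pairs $\{m,n\}$ with $m+n=N-l$, $m\le n$, as you do, is what actually produces the stated number. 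Two points to tighten. First, the $l=0$ case is a domain of double revolution, so its existence and nonradiality come from Theorem \ref{nonradial} and the result of \cite{orgin_an} that the paper cites, not from Theorem \ref{annulus_triple_dep}, which is stated only for the solutions produced by Theorem \ref{triple-thm}. Second, the claim that non-conjugate isotropy groups force distinct ground states is too strong on its own: a function can be invariant under two non-conjugate block subgroups simultaneously (a radial function is invariant under all of them). What actually carries the distinctness -- and what the paper uses -- is the nontrivial dependence on $(\phi,\theta,r)$ for the solution's own decomposition guaranteed by Theorem \ref{annulus_triple_dep}; it is this which rules out the solution having the extra block symmetry of a different decomposition (in particular it is not invariant under any $O(j)\times O(N-j)$, separating the triple-revolution solutions from the double-revolution ones). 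You flag the distinctness step as the crux, which is correct; the fix is to route the argument through the nontrivial-dependence conclusion rather than through conjugacy of isotropy groups alone.
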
  
 \begin{proof} Here we are going to use the $K_{-,\frac{\pi}{2}}$ symmetry in Theorem \ref{annulus_triple_dep}  and therefore $m$ and $n$ can be different.
 The cardinality of the set 
 \[D_2=\left\{(m,n) \in \N \times \N;\,\,   m+n=\N,\, 1\leq m\leq n\right\}
\] 
is $\Bigl\lfloor\frac{N}{2} \Bigr\rfloor
$, and for each $(m,n)\in D_2$  there exists  a no-radial solution $u$ which is invariant in $O(m)\times O(n)$ when $R$ is large enough as we have shown in \cite{orgin_an}.
Also,  the cardinality of the set 
 \[D_3=\{(m,n,l) \in \N \times \N \times \N;\,\,   m+n+l=\N,\, 1\leq m\leq n\leq l\}
\] 
is\[ \Bigl\lfloor\frac{N-1}{2} \Bigr\rfloor+\Bigl\lfloor\frac{N-2}{2} \Bigr\rfloor+...+\Bigl\lfloor\frac{N-k}{2} \Bigr\rfloor, \qquad k=\Bigl\lfloor\frac{N}{3} \Bigr\rfloor.\] By Theorem \ref{annulus_triple_dep}, for each $(m,n,l)\in D_3$  there exists  a  solution $u$ which is invariant in $O(m)\times O(n) \times O(l)$ and it is non invariant in $O(j)\times O(N-j)$ for any $j \in \{1,...,N-1\}.$ This completes the proof.
\end{proof}

\subsubsection{The case of symmetry in $\phi$} 

In this section we examine the case where the domain and $a$ have symmetry in $ \phi$.   In terms of the coordinates $ (s,t,\tau)$ we are examining the case where we have cylindrical symmetry around the $ \tau$ axis.     We suppose $m=n$ and $\Omega$ satisfies assumption 2 from Definition \ref{triple-def}, ie. suppose $ g^i=g^i( \phi, \theta)$ is smooth and positive on $ [0, \frac{\pi}{2}] \times [0, \pi/2]$ and for each fixed $ \theta \in (0,\pi/2)$ and $i=1,2$  we have   $ \phi \mapsto g^i(\phi,\theta)$ is constant on $(0, \pi/2)$.  

We further assume that $ a=a(r,\theta)$.  Then looking at (\ref{eq_triple}) (written in terms of $ (r,\phi,\theta)$) one sees that it is reasonable to look for solutions of (\ref{eq_triple}) which don't depend on $ \phi$ and in fact one can use the same imbedding to obain a solution for the given range of paramters that doesn't depend on $ \phi$.    Our next theorem gives sufficient conditions under which the ground state solution depends on $ \phi$ in a nontrivial way.

\begin{thm} \label{phi-dep}  Suppose $\Omega$ satisfies the above hypothesis and $ a_\phi=0$.  

\begin{enumerate}
    \item Suppose $p$ satisfies hypothesis from Theorem \ref{triple-thm} part 2  and $u$ is $K_+$ ground state solution promised by Theorem \ref{triple-thm} part 2. If $ \beta_0$ is large enough then $u$ is a function that depends on $\phi$ in a nontrivial way.
    
    \item Suppose $p$ satisfies hypothesis from Theorem \ref{triple-thm} part 1  and $u$ is $K_-$ ground state solution promised by Theorem \ref{triple-thm} part 1. If $ \beta_0$ is large enough then $u$ is a function that depends on $\phi$ in a nontrivial way.
    \end{enumerate}
    
\end{thm}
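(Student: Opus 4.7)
The plan is to adapt the symmetry-breaking argument of Theorem~\ref{nonradial} and of Case~II of Theorem~\ref{annulus_triple_dep} to break the $\phi$-invariance. Arguing by contradiction, assume the ground state $u$ is independent of $\phi$, so $u=u(r,\theta)$. Consider the Sturm--Liouville eigenvalue problem on $(0,\pi/4)$ with weight $w_{n,n}(\phi)=\cos^{n-1}(\phi)\sin^{n-1}(\phi)$, Neumann boundary conditions, and orthogonality to constants; let $\mu$ be its lowest such eigenvalue and $\psi$ a corresponding eigenfunction, which by Sturm oscillation theory is strictly monotonic on $(0,\pi/4)$. Extend $\psi$ evenly across $\phi=\pi/4$; sign-adjust it to be increasing on $(0,\pi/4)$ in case~(1) and decreasing in case~(2). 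Define $v(r,\phi,\theta):=u(r,\theta)\psi(\phi)$. For $t\in(0,1/\|\psi\|_\infty)$, $u+tv=u(1+t\psi)$ remains positive, retains the $G$-symmetry and the evenness across $\phi=\pi/4$, and its $\phi$-derivative has the sign prescribed by the relevant cone, so $u+tv\in K_{\pm}$.

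The heart of the proof is the estimate $\langle E''(u);v,v\rangle<0$. Since $u$ is $\phi$-independent, $|\nabla v|^2=\psi^2|\nabla u|^2+u^2\psi'^2/(r^2\sin^2\theta)$, and testing $-\Delta u=a u^{p-1}$ against $u\psi^2$ gives $\int_\Omega\psi^2|\nabla u|^2\,dx=\int_\Omega a u^p\psi^2\,dx$. Hence
\[
\langle E''(u);v,v\rangle=\int_\Omega\frac{u^2\psi'^2}{r^2\sin^2\theta}\,dx-(p-2)\int_\Omega\psi^2|\nabla u|^2\,dx.
\]
The eigenvalue identity $\int_0^{\pi/2}\psi'^2 w_{n,n}\,d\phi=\mu\int_0^{\pi/2}\psi^2 w_{n,n}\,d\phi$ converts the first term to $\mu\int_\Omega u^2\psi^2/(r^2\sin^2\theta)\,dx$, while Hardy's inequality (\ref{hardyy50}) applied to $v\in H_0^1(\Omega)$ rewrites the second as $\int_\Omega\psi^2|\nabla u|^2\,dx\ge\beta_0\int_\Omega u^2\psi^2/r^2\,dx-\mu\int_\Omega u^2\psi^2/(r^2\sin^2\theta)\,dx$. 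Combining yields
\[
\langle E''(u);v,v\rangle\le(p-1)\mu\int_\Omega\frac{u^2\psi^2}{r^2\sin^2\theta}\,dx-(p-2)\beta_0\int_\Omega\frac{u^2\psi^2}{r^2}\,dx,
\]
and since both integrals on the right are finite and positive on the annular domain, the right-hand side is strictly negative once $\beta_0$ exceeds the threshold set by their ratio; this is the sense in which ``$\beta_0$ large enough'' is used.

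With $\langle E''(u);v,v\rangle<0$ in hand, conclude as in the last paragraph of the proof of Theorem~\ref{nonradial}: pick $l>0$ with $E((u+\sigma v)l)\le0$ for all $|\sigma|\le1$, set $\gamma_\sigma(\tau):=\tau(u+\sigma v)l\in\Gamma$, and write $\max_{\tau\in[0,1]}E(\gamma_\sigma(\tau))=E(g(\sigma)(u+\sigma v)l)$ for a smooth $g$ with $g(0)=1/l$ and $g'(0)=0$. Then $h(\sigma):=E(g(\sigma)(u+\sigma v)l)-E(u)$ satisfies $h(0)=0$, $h'(0)=0$ (from $E'(u)=0$ combined with $\int\psi\,w_{n,n}\,d\phi=0$), and $h''(0)<0$ by the previous paragraph, so $c\le\max_\tau E(\gamma_\sigma(\tau))<E(u)=c$, contradicting the mountain-pass characterization of $c$. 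The main obstacle is the second-variation bound: because the $\phi$-component of $\nabla v$ carries the anisotropic weight $1/(r^2\sin^2\theta)$ and Hardy's inequality only directly controls $\int v^2/|x|^2$, the clean substitution available in Theorem~\ref{nonradial} and in Case~I of Theorem~\ref{annulus_triple_dep} does not quite work here, and the mismatch between the two weighted integrals must be absorbed into the largeness hypothesis on $\beta_0$.
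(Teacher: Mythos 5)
Your proposal follows essentially the same strategy as the paper, which proves the statement by reference to Theorem~\ref{annulus_triple_dep} (and ultimately to the template of Theorem~\ref{nonradial}): assume the ground state $u$ is $\phi$-independent, perturb by $v=u\psi(\phi)$ with $\psi$ a Sturm--Liouville eigenfunction orthogonal to constants in the weight $w_{n,n}$, and show $\langle E''(u);v,v\rangle<0$ so that the mountain-pass level $c$ is exceeded. You are more careful than the paper's one-line proof in one important respect: you pose the eigenvalue problem on $(0,\pi/4)$ with Neumann conditions and then extend $\psi$ evenly across $\phi=\pi/4$. This is exactly what is required so that $u(1+t\psi)$ retains the evenness across $\phi=\pi/4$ and the prescribed sign of $u_\phi$, hence lies in $K_\pm$. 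The eigenvalue problem in (\ref{var_mu31}) is posed on $(0,\pi/2)$ without the evenness constraint, whose second eigenfunction is odd across $\pi/4$ when $m=n$ and is therefore not an admissible perturbation here; your choice is the right one and is worth recording explicitly.

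The genuine difficulty, which you correctly isolate, lies in the final estimate. The $\phi$-component of $\nabla v$ carries the anisotropic weight $1/(r^2\sin^2\theta)$, while the Hardy inequality (\ref{hardyy50}) only bounds $\int v^2/r^2$. Your display
\[
\langle E''(u);v,v\rangle\le(p-1)\mu\int_\Omega\frac{u^2\psi^2}{r^2\sin^2\theta}\,dx-(p-2)\,\beta_0\int_\Omega\frac{u^2\psi^2}{r^2}\,dx
\]
is arithmetically correct, but the conclusion that ``the right-hand side is strictly negative once $\beta_0$ exceeds the threshold set by their ratio'' does not close the argument. The ratio
\[
\int_\Omega\frac{u^2\psi^2}{r^2\sin^2\theta}\,dx\Big/\int_\Omega\frac{u^2\psi^2}{r^2}\,dx\ \ge\ 1
\]
involves the (hypothetical) $\phi$-independent ground state $u$, which itself depends on the domain: as one deforms $\Omega$ to drive $\beta_0$ up, $u$ changes, and there is no a priori uniform upper bound on this ratio along that family. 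What the argument actually requires is a Hardy-type inequality with the cylindrical weight $1/(s^2+t^2)=1/(r^2\sin^2\theta)$ whose constant scales like $\beta_0$, and that is a genuinely different statement from the radial Hardy inequality you (and the paper) cite. It is worth noting that the paper's own Case~II of Theorem~\ref{annulus_triple_dep}, on which the proof of Theorem~\ref{phi-dep} relies verbatim, exhibits the same mismatch: the displayed conclusion there is asserted ``as in the proof of Case~I,'' but in Case~I the eigenvalue identity converts the $\psi'$-term into $\int u^2\psi_l^2/r^2$, which matches the Hardy weight exactly, whereas in Case~II it produces $\int u^2\psi^2_{m,n}/(r^2\sin^2\theta)$, which does not. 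So the gap in your write-up mirrors, rather than introduces, an apparent gap in the source; but it is a real gap, and a correct proof would need either a cylindrical Hardy inequality tied to $\beta_0$ or a uniform control of the ratio above along the family of domains under consideration.
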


\begin{proof} The proof follows the same strategy as the proof of Theorem \ref{annulus_triple_dep}. 
\end{proof}

\begin{remark} One can examine multiplicity type results for these domains also, we leave this to the interested reader. 
\end{remark}

 %\textbf{((ABBAS...SOME OF YOUR REFERENCES ARE NOT USED... THEY ARE INDICATED IN THE REFERENCES))} 

\end{document}